\DeclareMathOperator{\diag}{diag}
\newcommand{\x}{\mathbf{x}}
\newcommand{\R}{\mathbb{R}}
\renewcommand{\d}{\mathrm{d}}
\let\L\relax
\DeclareMathOperator{\L}{\mathcal{L}}
\newcommand{\y}{\bm{y}}
\newcommand{\lam}{\bm{\lambda}}
\newtheorem{remark}{Remark}
\newtheorem{theorem}{Theorem}
\newtheorem{corollary}{Corollary}
\title{New time domain decomposition methods for parabolic control problems I: Dirichlet-Neumann and Neumann-Dirichlet algorithms}
\author{Martin Jakob Gander$^{1}$, Liu-Di LU$^{1}$}
\date
{%
\noindent{\small\textit{$^1$Section of Mathematics, University of Geneva, Rue du Conseil Général 7-9, 1205 Geneva, Switzerland}}\\
}
\begin{document}

\maketitle

\begin{abstract} 
We present new Dirichlet-Neumann and Neumann-Dirichlet algorithms with a time domain
 decomposition applied to unconstrained parabolic optimal control problems. After a spatial 
 semi-discretization, we use the Lagrange multiplier approach to derive a coupled 
 forward-backward optimality system, which can then be solved using a time domain
 decomposition. Due to the forward-backward structure of the optimality system, three
 variants can be found for the Dirichlet-Neumann and Neumann-Dirichlet algorithms. We
 analyze their convergence behavior and determine the optimal relaxation parameter for
 each algorithm. Our analysis reveals that the most natural algorithms are actually only 
 good smoothers, and there are better choices which lead to efficient solvers. We illustrate
 our analysis with numerical experiments.  
\end{abstract}

\begin{paragraph}{Keywords: }
Time domain decomposition, Dirichlet-Neumann algorithm, Neumann-Dirichlet algorithm, Parabolic optimal control problems, Convergence analysis.
\end{paragraph}

\begin{paragraph}{MSCcodes: }
65M12, 65M55, 65Y05,
\end{paragraph}

\section{Introduction}
PDE-constrained optimal control problems arise in various areas, often containing multiphysics or multiscale phenomena, and also high frequency components on different time scales. This requires very fine spatial and temporal discretizations, resulting in very large problems, for which efficient parallel solvers are needed; we refer to~\cite{Hinze2009,Troltzsch2010} for a brief review. We present and analyze a new class of time domain decomposition methods based on Dirichlet-Neumann and Neumann-Dirichlet techniques. We consider as our model a parabolic control problem: for a given target function $\hat y\in L^2(Q)$ and $\gamma\in\R^+,\nu\in\R^+_*$, we want to minimize the cost functional
\begin{equation}\label{eq:J}
J(y,u) := \frac12 \|y -\hat y\|^2_{L^2(Q)} \,\d t + \frac{\gamma}2 \|y(T) - \hat y(T)\|^2_{L^2(\Omega)} + \frac{\nu}2\|u\|^2_{U_{\text{ad}}},
\end{equation}
subject to the linear parabolic state equation
  \begin{equation}\label{eq:heat}
    \begin{aligned}
      \partial_t y - \Delta y &= u \quad &&\text{ in } Q:=\Omega\times (0,T), \\
      y &=0 &&\text{ on }\Sigma:=\partial\Omega\times(0,T), \\
      y(0) &=y_0 &&\text{ on }\Sigma_0:=\Omega\times\{0\},
    \end{aligned}
  \end{equation}
where $\Omega\subset\R^d$, $d=1,2,3$ is a bounded domain with boundary $\partial\Omega$, and $T$ is the fixed final time. The control $u$ on the right-hand side of the PDE is in an admissible set $U_{\text{ad}}$, and we want to control the solution of the parabolic PDE~\eqref{eq:heat} towards a target state $\hat y$. For simplicity, we consider here homogeneous boundary conditions.

  The parabolic control problem~\eqref{eq:J}-\eqref{eq:heat} has a
  unique solution for the classical choice $u\in L^2(Q)$, which can
  be characterized by a forward-backward optimality system, see e.g.
  \cite{Borzi2011,Lions1971,Troltzsch2010}. More recently, also
  energy regularization has been considered,
  see~\cite{Neumuller2021} for elliptic and~\cite{Langer2021675} for
  parabolic cases. This is motivated by the fact that the state
  $y\in L^2(0,T;H^1_0(\Omega))$ is well-defined as the solution of the
  heat equation~\eqref{eq:heat} for the control $z\in
  L^2(0,T;H^{-1}(\Omega))$, and thus offers an interesting alternative.
  
  We are interested in applying Time Domain Decomposition methods
(DDMs) to the forward-backward optimality system. DDMs were 
developed for elliptic PDEs and are very efficient in parallel
  computing environments, see e.g.~\cite{Dolean2015,Toselli2005}. DDMs
  were extended to time-dependent problems using waveform relaxation
  techniques from~\cite{Lelarasmee1982}, with a spatial
decomposition and solving the problem on small space-time
cylinders~\cite{Gander1998}. The extension of DDMs to elliptic control
problems is quite natural,
see~\cite{Bartlett2006,Benamou1996,Chang2011,Gander2019}, but less
  is known about DDMs applied to parabolic control problems.

The role of the time variable in forward-backward optimality systems
is key, and it is natural to seek efficient solvers through time
domain decomposition. For classical evolution problems, the idea
of time domain decomposition goes back
  to~\cite{Nievergelt1964}. Parallel Runge Kutta methods were
introduced in~\cite{Miranker1967} with good small scale time
parallelism. In~\cite{Lubich1987,Vandewalle1994}, the authors propose
to combine multigrid methods with waveform
relaxation. Parareal~\cite{Lions2002} uses a different approach,
  namely multiple shooting with an approximate Jacobian on a coarse
grid, and Parareal techniques led to a new ParaOpt
  algorithm~\cite{Gander2020} for optimal control, see also
  \cite{Heinkenschloss2005}. In~\cite{Gander2016,Kwok2017}, Schwarz
  methods are used to decompose the time domain for
optimal control.

We develop and analyze here new time domain decomposition algorithms to solve
  the PDE-constrained problem~\eqref{eq:J}-\eqref{eq:heat} using
  Dirichlet-Neumann and Neumann-Dirichlet techniques that go back
  to~\cite{Bjorstad1986} for space parallelism. We introduce in
Section~\ref{sec:2} the optimality system and its
  semi-discretization. In Section~\ref{sec:3} we present our new time
  parallel Dirichlet-Neumann and Neumann-Dirichlet algorithms and
  study their convergence. Numerical experiments are shown in
Section~\ref{sec:4}, and we draw conclusions in Section~\ref{sec:5}.

\section{Optimality system and its semi-discretization}\label{sec:2}

	The PDE-constrained optimization problem~\eqref{eq:J}-\eqref{eq:heat} 
	can be solved using Lagrange multipliers~\cite[Chapter 3]{Troltzsch2010}, see
  	also~\cite{Gander2014} for a historical context. To obtain the
  	associated optimality system, we introduce the Lagrangian function $\L$
	associated with Problem~\eqref{eq:J}-\eqref{eq:heat},
	\[\begin{aligned}
		\L(y,u,\lambda) = &J(y,u) + \langle \partial_t y - \Delta y - u,\lambda\rangle\\
		= &\int_0^T\Big(\langle \partial_t y,\lambda\rangle_{V',V} + \int_{\Omega}\big(\frac12|y-\hat y|^2+\frac{\nu}2|u|^2 + \nabla y\cdot\nabla\lambda - u\lambda\big) \,\d\x\Big)\,\d t\\
		& + \frac{\gamma}2\int_{\Omega} |y(T) - \hat y(T)|^2\,\d \x,
	\end{aligned}\]
with $y\in W(0,T):=L^2(0,T;V)\cap H^1(0,T;V')$, $u\in L^2(Q)$,
$V:=H^1_0(\Omega)$ and $V':=H^{-1}(\Omega)$ the dual space of
$V$. Here $\lambda\in L^2(0,T;V)$ denotes the adjoint state (also
called the Lagrange multiplier). Taking the derivative of
$\L$ with respect to $\lambda$ and equating this to
zero, we find for all test functions $\chi\in L^2(0,T;V)$,
\[0=\langle\partial_{\lambda}\L(y,u,\lambda),\chi\rangle = \int_0^T\big(\langle \partial_t y,\chi\rangle_{V',V}+ \int_{\Omega}\big( \nabla y\cdot\nabla\chi - u\chi\big) \,\d\x\big)\,\d t,\]
which implies that $y\in V$ is the weak solution of the state
equation~\eqref{eq:heat} (also called the primal problem).
Taking the derivative of $\L$ with respect to $y$ and
equating this to zero, and obtain for all $\chi\in W(0,T)$
\[\begin{aligned}
  0=\langle\partial_{y}\L(y,u,\lambda),\chi\rangle = &\int_0^T\Big(\langle \partial_t \chi,\lambda\rangle_{V',V}+ \int_{\Omega}\big( (y-\hat y)\chi + \nabla \chi\cdot\nabla\lambda \big) \,\d\x\Big)\,\d t\\
  =&\langle \chi(T),\lambda(T)+\gamma(y(T)-\hat y(T)) \rangle_{L^2(\Omega)} - \langle \chi(0),\lambda(0) \rangle_{L^2(\Omega)}\\
  &+\int_0^T\langle-\partial_t \lambda-\Delta \lambda + (y-\hat y) ,\chi\rangle_{V',V}\,\d t,
\end{aligned}\]
where we used integration by parts with respect to $t$ in $\partial_t \chi$ and with respect to $\x$ in $\nabla \chi$. By choosing $\chi\in C_0^{\infty}(Q)$ and applying an argument of density, we find that the last integral is zero. Choosing then $\chi\in W(0,T)$ such that $\chi(0)=0$, we obtain the adjoint equation (also called the dual problem)
\begin{equation}\label{eq:adjoint}
  \begin{aligned}
    \partial_t \lambda + \Delta \lambda &= y - \hat y \quad &&\text{ in } Q, \\
    \lambda &=0 &&\text{ on }\Sigma, \\
    \lambda(T) &=-\gamma(y(T)-\hat y(T)) &&\text{ on }\Sigma_T:=\Omega\times\{T\}.
  \end{aligned}
\end{equation}
Finally, taking the derivative of $\L$ with respect to $u$ and equating
this to zero, we obtain for all test functions $\chi\in L^2(Q)$, 
\[0=\langle \partial_u(y,u,p),\chi\rangle = \int_0^T\int_{\Omega} (\nu u - \lambda) \chi \, \d \x \, \d t,\]
which gives the optimality condition 
\begin{equation}\label{eq:opt_cond}
  \lambda = \nu u \quad \text{ in } Q.
\end{equation}
If a control $u$ is optimal with the associated state $y$ of the
optimization problem~\eqref{eq:J}-\eqref{eq:heat}, then the
first-order optimality system~\eqref{eq:heat}, \eqref{eq:adjoint}
and~\eqref{eq:opt_cond} must be satisfied. This is a
forward-backward system, i.e., the primal problem is solved forward
in time with an initial condition while the dual problem is solved
backward in time with a final condition, and our new time decomposition
  algorithms solve this system. Since the time variable plays a
special role, we consider a semi-discretization in space, and
replace the spatial operator $-\Delta$ in the primal
problem~\eqref{eq:heat} by a matrix $A\in\R^{n\times n}$, for
instance using a Finite Difference discretization in
space. We then obtain as above the semi-discrete optimality system
(dot denoting the time derivative)
\[
  \left\{
    \begin{aligned}
      \dot \y + A \y &= \bm{u} \quad \text{ in } (0,T),\\
      \y(0) &= \y_0,
    \end{aligned}
  \right.
  \qquad
  \left\{
    \begin{aligned}
      \dot \lam - A^T \lam &= \y-\hat \y \quad \text{ in } (0,T),\\
      \lam(T) &= -\gamma(\y(T)-\hat \y(T)),
    \end{aligned}
  \right.
\]
where $\lam(t) = \nu \bm{u}(t)$ for all $t\in\Omega$. Eliminating
$\bm{u}$, we obtain in matrix form 
\begin{equation}\label{eq:sysODE}
  \left\{
    \begin{aligned}
      \begin{pmatrix}
        \dot \y\\
        \dot \lam
      \end{pmatrix}
      +
      \begin{pmatrix}
        A & -\nu^{-1} I\\
        -I & -A^T
      \end{pmatrix}
      \begin{pmatrix}
        \y\\
        \lam
      \end{pmatrix}
      &=
      \begin{pmatrix}
        0\\
        -\hat \y
      \end{pmatrix} \text{ in } (0,T),\\
      \y(0) &= \y_0,\\
      \lam(T)+\gamma \y(T) &= \gamma\hat \y(T),
    \end{aligned}
  \right.
\end{equation}
where $I$ is the identity. If $A$ is symmetric, $A= A^T$, 
which is natural for discretizations of $-\Delta$, then it
can be diagonalized, $A=PDP^{-1}$, $D:=\diag(d_1,\ldots,d_n)$
 with $d_i$ the $i$-th eigenvalue of $A$. The
system~\eqref{eq:sysODE} can thus also be diagonalized
\[
  \left\{\begin{aligned}
    \begin{pmatrix}
      \dot{\bm{z}}\\
      \dot{\bm{\mu}}
    \end{pmatrix}
    +
    \begin{pmatrix}
      D & -\nu^{-1} I\\
      -I & -D
    \end{pmatrix}
    \begin{pmatrix}
      \bm{z}\\
      \bm{\mu}
    \end{pmatrix}
    &=
    \begin{pmatrix}
      0\\
      -\hat{\bm{z}}
    \end{pmatrix} \text{ in } (0,T),\\
    \bm{z}(0) &= \bm{z}_0,\\
    \bm{\mu}(T) +\gamma \bm{z}(T)&= \gamma\hat{\bm{z}}(T),
  \end{aligned}\right.
\]
where $\bm{z} := P^{-1} \y$, $\mu:=P^{-1}\lam$, $\hat{\bm{z}}:=P^{-1}
\hat \y$ and $\bm{z}_0:=P^{-1}\y_0$. This system then represents
  $n$ independent $2\times 2$ systems of ODEs of the form
\begin{equation}\label{eq:sysODEreduced}
  \left\{
    \begin{aligned}
      \begin{pmatrix}
        \dot z_{(i)}\\
        \dot \mu_{(i)}
      \end{pmatrix}
      +
      \begin{pmatrix}
        d_i & -\nu^{-1} \\
        -1 & -d_i
      \end{pmatrix}
      \begin{pmatrix}
        z_{(i)}\\
        \mu_{(i)}
      \end{pmatrix}
      &=
      \begin{pmatrix}
        0\\
        -\hat z_{(i)}
      \end{pmatrix} \text{ in } (0,T),\\
      z_{(i)}(0) &= z_{(i),0},\\
      \mu_{(i)}(T) + \gamma z_{(i)}(T) &= \gamma \hat z_{(i)}(T),
    \end{aligned}
  \right.
\end{equation}
where $z_{(i)}$, $\mu_{(i)}$, $\hat z_{(i)}$ are the $i$-th components of the vectors
$\bm{z}$, $\bm{\mu}$, $\hat{\bm{z}}$. Isolating
  the variable in each equation in~\eqref{eq:sysODEreduced}, we find
  the identities
\begin{equation}\label{eq:2id} 
  \mu_{(i)} = \nu(\dot z_{(i)} + d_i z_{(i)}), \qquad z_{(i)} = \dot \mu_{(i)}  - d_i \mu_{(i)} +\hat z_{(i)}.
\end{equation}
We use the identity of $z$ to eliminate $\mu$, and obtain a second-order ODE from~\eqref{eq:sysODEreduced},
\begin{equation}\label{eq:z}
  \left\{\begin{aligned}
    \ddot z_{(i)}- (d_i^2+\nu^{-1}) z_{(i)} &= -\nu^{-1}\hat z_{(i)} \text{ in } (0,T),\\
    z_{(i)}(0) &= z_{(i),0},\\
    \dot z_{(i)}(T) + (\nu^{-1}\gamma + d_i) z_{(i)}(T)&= \nu^{-1}\gamma \hat z_{(i)}(T).
  \end{aligned}\right.
\end{equation}
Similarly, we can also eliminate $z$ to get
\begin{equation}\label{eq:mu}
  \left\{\begin{aligned}
    \ddot \mu_{(i)}- (d_i^2+\nu^{-1}) \mu_{(i)} &= -\dot{\hat z}_{(i)}-d_i\hat z_{(i)} \text{ in } (0,T),\\
    \dot \mu_{(i)}(0)-d_i\mu_{(i)}(0) &= z_{(i),0} -\hat z_{(i)}(0),\\
    \gamma \dot \mu_{(i)}(T)+(1-\gamma d_i)\mu_{(i)}(T)&=0.
  \end{aligned}\right.
\end{equation}
To simplify the notation in what follows, we define
\begin{equation}\label{eq:sob}
  \sigma_i:=\sqrt{d_i^2+\nu^{-1}}, \quad \omega_i:=\nu^{-1}\gamma + d_i, \quad \beta_i:=1-\gamma d_i.
\end{equation}
In our analysis for the error, $\hat \y$ will equal zero, which
  implies $\hat{\bm{z}}=0$, and the solution of~\eqref{eq:z}
  and~\eqref{eq:mu} is then
\begin{equation}\label{eq:gensol}
  z_{(i)}(t) \text{ or } \mu_{(i)}(t) \, = A_i\cosh(\sigma_i t) + B_i\sinh(\sigma_i t),
\end{equation}
where $A_i, B_i$ are two coefficients.
\begin{remark}
  	Our arguments above work for any diagonalizable matrix $A$,
    and thus our results will apply to more general parabolic optimal
    control problems than the heat equation. Note also that the
    diagonalization is only a theoretical tool for our convergence
    analysis, and not needed to run our new time domain decomposition algorithms.
\end{remark}

\section{Dirichlet-Neumann and Neumann-Dirichlet algorithms in time}\label{sec:3}

  We now apply Dirichlet-Neumann (DN) and Neumann-Dirichlet (ND)
  techniques in time to obtain our new time domain decomposition algorithms to
  solve the system~\eqref{eq:sysODEreduced}, and study their
  convergence. Focusing on the error equations, we set the initial
condition $\y_0=0$ (i.e., $\bm{z}_0=0$) and the target functions $\hat
\y=0$ (i.e., $\hat{\bm{z}}=0$). We decompose the time domain
$\Omega:=(0,T)$ into two non-overlapping time subdomains
$\Omega_1:=(0,\alpha)$ and $\Omega_2:=(\alpha,T)$, where $\alpha$ is
the interface. We denote by $z_{j,(i)}$ and $\mu_{j,(i)}$ the
restriction to $\Omega_j$, $j=1,2$ of $z_{(i)}$ and $\mu_{(i)}$.
Since system~\eqref{eq:sysODEreduced} is a forward-backward
system, it appears natural at first sight to keep this property
for the decomposed case, as illustrated in
Figure~\ref{fig:forback}:
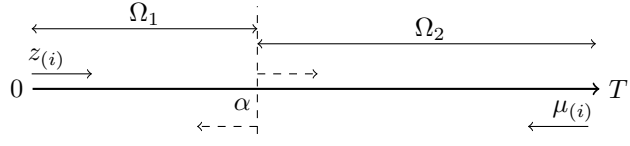
\begin{figure}
  \centering
  \begin{tikzpicture}
    \node (a) at (0,0) {0};
    \node (b) at (8,0) {$T$};
    \node (c) at (3,-0.2) {$\alpha$};
    \draw[thick,->] (a)--(b);
    \draw[dashed] (3.2,-0.6)--(3.2,1.1);
    \draw[<->] (0.2,0.8)--(3.2,0.8);
    \draw[<->] (3.2,0.6)--(7.7,0.6);
    \node (d) at (1.7,1) {$\Omega_1$};
    \node (e) at (5.5,0.8) {$\Omega_2$};
    \draw[->] (0.2,0.2)--(1,0.2);
    \node at (0.4,0.4) {$z_{(i)}$};
    \draw[dashed,->] (3.2,0.2)--(4,0.2);
    \draw[->] (7.6,-0.5)--(6.8,-0.5);
    \node at (7.4,-0.3) {$\mu_{(i)}$};
    \draw[dashed,->] (3.2,-0.5)--(2.4,-0.5);
  \end{tikzpicture}
  \caption{Illustration of the forward-backward system.}
  \label{fig:forback}
\end{figure}
we expect to have a final condition for the adjoint state $\mu_{(i)}$
in $\Omega_1$ since we already have an initial condition for
$z_{(i)}$; similarly, we expect to have an initial condition for the
primal state $z_{(i)}$ in $\Omega_2$ since we already have a final
condition for $\mu_{(i)}$. Therefore, a natural DN 
  algorithm in time solves for the iteration index $k=1,2,\ldots$
\begin{equation}\label{eq:DN1}
  \begin{aligned}
    &\left\{
      \begin{aligned}
        \begin{pmatrix}
          \dot z_{1,(i)}^k\\
          \dot \mu_{1,(i)}^k
        \end{pmatrix}
        +
        \begin{pmatrix}
          d_i & -\nu^{-1} \\
          -1 & -d_i
        \end{pmatrix}
        \begin{pmatrix}
          z_{1,(i)}^k\\
          \mu_{1,(i)}^k
        \end{pmatrix}
        &=
        \begin{pmatrix}
          0\\
          0
        \end{pmatrix} \text{ in } \Omega_1,\\
        z_{1,(i)}^k(0) &= 0,\\
        \mu_{1,(i)}^k(\alpha) & = f_{\alpha,(i)}^{k-1},
      \end{aligned}
    \right.\\
    &\left\{
      \begin{aligned}
        \begin{pmatrix}
          \dot z_{2,(i)}^k\\
          \dot \mu_{2,(i)}^k
        \end{pmatrix}
        +
        \begin{pmatrix}
          d_i & -\nu^{-1} \\
          -1 & -d_i
        \end{pmatrix}
        \begin{pmatrix}
          z_{2,(i)}^k\\
          \mu_{2,(i)}^k
        \end{pmatrix}
        &=
        \begin{pmatrix}
          0\\
          0
        \end{pmatrix} \text{ in } \Omega_2,\\
        \dot z_{2,(i)}^k(\alpha) &= \dot z_{1,(i)}^k(\alpha),\\
        \mu_{2,(i)}^k(T) + \gamma z_{2,(i)}^k(T)&=0,
      \end{aligned}
    \right.
  \end{aligned}
\end{equation}
and then the transmission condition is updated by
\begin{equation}\label{eq:DN1tran}
  f_{\alpha,(i)}^{k}:= (1-\theta)f_{\alpha,(i)}^{k-1} + \theta\mu_{2,(i)}^k(\alpha),
\end{equation}
with a relaxation parameter $\theta\in(0,1)$. However, there are
many other ways to decouple in time using DN and ND techniques for
  problem~\eqref{eq:sysODEreduced}: we can apply the technique to
both states $(z_{(i)},\mu_{(i)})$ as in~\eqref{eq:DN1}, or we can
apply it just to one of these two states in the reduced
  forms~\eqref{eq:z} and~\eqref{eq:mu}.
And with the identities~\eqref{eq:2id}, we can transfer
the Dirichlet and the Neumann transmission condition from one state to
the other. We list in Table~\ref{tab:combination} all possible new
  time domain decomposition algorithms we can obtain, along with
their equivalent representations in terms of other formulations.
\begin{table}
  \begin{center}
    \begin{tabular}{ c|c|c|c|c }
     &Problem &$\Omega_1$ &$\Omega_2$& algorithm type  \\
     \hline\hline 
     \multirow{6}{*}{Category I: $(z_{(i)},\mu_{(i)})$}&\eqref{eq:sysODEreduced} &$\mu_{(i)}$ &$\dot z_{(i)}$ &(DN) \\  
     &\eqref{eq:z} &$\dot z_{(i)}+d_i z_{(i)}$ &$\dot z_{(i)}$ &(RN)\\
     & \eqref{eq:mu} &$\mu_{(i)}$ &$\ddot \mu_{(i)}-d_i\dot\mu_{(i)}$ &(DR) \\
     \cline{2-5} 
     &\eqref{eq:sysODEreduced} &$\dot \mu_{(i)}$ &$z_{(i)}$ &(ND)\\  
     &\eqref{eq:z} &$\ddot z_{(i)}+d_i \dot z_{(i)}$ &$z_{(i)}$ &(RD)\\ 
     &\eqref{eq:mu} &$\dot \mu_{(i)}$ &$\dot \mu_{(i)}-d_i\mu_{(i)}$ &(NR)\\ 
     \hline\hline
     \multirow{6}{*}{Category II: $z_{(i)}$}&\eqref{eq:sysODEreduced} &$z_{(i)}$ &$\dot z_{(i)}$ &(DN) \\  
     &\eqref{eq:z} &$z_{(i)}$ & $\dot z_{(i)}$ &(DN)\\
     &\eqref{eq:mu} &$\dot \mu_{(i)}-d_i\mu_{(i)}$ &$\ddot \mu_{(i)}-d_i\dot\mu_{(i)}$ &(RR)\\
     \cline{2-5} 
     &\eqref{eq:sysODEreduced} &$\dot z_{(i)}$ &$z_{(i)}$ &(ND)  \\  
     &\eqref{eq:z} &$\dot z_{(i)}$ &$z_{(i)}$ &(ND)\\
     &\eqref{eq:mu} &$\ddot \mu_{(i)}-d_i\dot\mu_{(i)}$ &$\dot \mu_{(i)}-d_i\mu_{(i)}$ &(RR)\\
     \hline\hline
     \multirow{6}{*}{Category III: $\mu_{(i)}$}&\eqref{eq:sysODEreduced} &$\mu_{(i)}$ &$\dot \mu_{(i)}$ &(DN) \\ 
     &\eqref{eq:z} &$\dot z_{(i)}+d_i z_{(i)}$ &$\ddot z_{(i)}+d_i \dot z_{(i)}$ &(RR)\\
     &\eqref{eq:mu} &$\mu_{(i)}$ &$\dot\mu_{(i)}$ &(DN)\\
     \cline{2-5} 
     &\eqref{eq:sysODEreduced}&$\dot\mu_{(i)}$ &$ \mu_{(i)}$ &(ND)  \\  
     &\eqref{eq:z} &$\ddot z_{(i)}+d_i \dot z_{(i)}$ &$\dot z_{(i)}+d_i z_{(i)}$ &(RR) \\
     &\eqref{eq:mu} &$\dot\mu_{(i)}$ &$\mu_{(i)}$ &(ND)
    \end{tabular}
  \end{center}
  \caption{Combinations of the DN and ND algorithms. The letter R stands for a Robin type condition.}
  \label{tab:combination}
\end{table}
The algorithms can be classified into three main categories, and
  each category is composed of two blocks, the first block represents
  a DN technique applied to~\eqref{eq:sysODEreduced}, whereas the
  second block represents a ND technique. Each block contains three
 rows: the first row is the algorithm applied to
formulation~\eqref{eq:sysODEreduced}, the second row the algorithm
applied to formulation~\eqref{eq:z} and the third row the algorithm
applied to formulation~\eqref{eq:mu}.

\begin{remark}\label{rem:transfer}
  In Table~\ref{tab:combination}, the transmission conditions $\ddot
  z_{(i)} + d_i \dot z_{(i)}$ and $\ddot \mu_{(i)} - d_i\dot
  \mu_{(i)}$ are in fact Robin type conditions, since, using the
  identity~\eqref{eq:2id} of $z_{(i)}$ and $\mu_{(i)}$, we find 
   \[\dot z_{(i)}=\ddot \mu_{(i)} - d_i\dot\mu_{(i)}, \quad \dot \mu_{(i)}=\ddot z_{(i)} + d_i\dot z_{(i)}.\]
  On the other hand, from the first equation of~\eqref{eq:z} and of~\eqref{eq:mu}, we have
   \[\ddot z_{(i)} - \sigma_i^2 z_{(i)}=0,\quad \ddot \mu_{(i)} - \sigma_i^2 \mu_{(i)}=0.\] 
  Substituting $\ddot z_{(i)}$ and $\ddot \mu_{(i)}$ gives 
   \[\dot \mu_{(i)}=\ddot z_{(i)} + d_i \dot z_{(i)} = d_i\dot z_{(i)} + \sigma_i^2 z_{(i)}, \quad \dot z_{(i)}=\ddot \mu_{(i)} - d_i\dot \mu_{(i)} = \sigma_i^2 \mu_{(i)}-d_i\dot\mu_{(i)}.\]
  Thus the transmission conditions containing a second derivative
  in Table~\ref{tab:combination} are indeed Robin type
  conditions. We decided to keep the notations $\ddot z_{(i)}$ and
  $\ddot \mu_{(i)}$ in Table~\ref{tab:combination} to show the direct
  link between the two states $z_{(i)}$ and $\mu_{(i)}$.

  However, there are other interpretations of some transmission
  conditions in certain circumstances. For instance, let us take the
  Neumann condition $\dot z_{(i)}$ in the second block of Category
  II for the problem~\eqref{eq:sysODEreduced}, it can also be
  interpreted as a Robin condition $\sigma_i^2
  \mu_{(i)}-d_i\dot\mu_{(i)}$ using the above argument. Then, this
  algorithm can also be read as a Robin-Dirichlet (RD) type algorithm
  instead of a Neumann-Dirichlet type. Moreover, this interpretation
  is particularly useful in this case, since it reveals the fact that
  the forward-backward property of the
  problem~\eqref{eq:sysODEreduced} is still kept by this
  algorithm. Otherwise, we can also use the identity of $\mu_{(i)}$
  in~\eqref{eq:2id} to transfer this Neumann condition $\dot z_{(i)}$
  to $\mu_{(i)}-d_i z_{(i)}$. This is also useful from a numerical
  point of view, since we can transfer a Neumann condition to a
  Dirichlet type condition. This will be used in detail in the
  following analysis.
\end{remark}

\subsection{Category I}\label{sec:3.1}

We start with the algorithms in Category I, which run on the
pair $(z_{(i)},\mu_{(i)})$ to solve~\eqref{eq:sysODEreduced}, and study
the DN and then the ND variant.

\subsubsection{Dirichlet-Neumann algorithm \texorpdfstring{(DN$_1$)}{DN1}} 

This is~\eqref{eq:DN1}, at first sight the most natural method
  that keeps the forward-backward structure as in the original
problem~\eqref{eq:sysODEreduced}. To analyze the convergence
behavior, we can choose any of the problem
formulations~\eqref{eq:z}, \eqref{eq:mu}, since they are equivalent
to~\eqref{eq:sysODEreduced}. Choosing~\eqref{eq:z}, the algorithm 
DN$_1$ for $i=1,\ldots,n,$ and iteration $k=1,2,\ldots$ is given by
\begin{equation}\label{eq:errDN1}
  \left\{
    \begin{aligned}
      \ddot z_{1,(i)}^k - \sigma_i^2 z_{1,(i)}^k &= 0 \text{ in } \Omega_1,\\
      z_{1,(i)}^k(0) &= 0, \\
      \dot z_{1,(i)}^k(\alpha) + d_i z_{1,(i)}^k(\alpha) &= f_{\alpha,(i)}^{k-1},
    \end{aligned}
  \right.
  \quad
  \left\{
    \begin{aligned}
      \ddot z_{2,(i)}^k - \sigma_i^2 z_{2,(i)}^k &= 0 \text{ in } \Omega_2,\\
      \dot z_{2,(i)}^k(\alpha) &= \dot z_{1,(i)}^k(\alpha),\\
      \dot z_{2,(i)}^k(T) + \omega_i z_{2,(i)}^k(T) &= 0,
    \end{aligned}
  \right.
\end{equation}
and the update of the transmission condition defined
in~\eqref{eq:DN1tran} becomes
\begin{equation}\label{eq:errDN1tran}
  f_{\alpha,(i)}^{k}= (1-\theta)f_{\alpha,(i)}^{k-1} + \theta\big(\dot z_{2,(i)}^k(\alpha) + d_i z_{2,(i)}^k(\alpha)\big). 
\end{equation}
This is a Robin-Neumann type algorithm applied to solve the
problem~\eqref{eq:z}. Using the general
solution~\eqref{eq:gensol}, and the initial and final condition, we
find
\begin{equation}\label{eq:zsol}
  z_{1,(i)}^k(t) = A_i^k \sinh(\sigma_i t), \, z_{2,(i)}^k(t) =  B_i^k \Big( \sigma_i\cosh\big(\sigma_i (T-t)\big) + \omega_i\sinh\big(\sigma_i (T-t)\big) \Big),
\end{equation}
where $A_i^k$ and $B_i^k$ are determined by the transmission conditions at $\alpha$ in~\eqref{eq:errDN1}. Note that we will use~\eqref{eq:zsol} in the analysis for all algorithms, since only the transmission conditions will change. Inserting~\eqref{eq:zsol} at the interface $\alpha$ into~\eqref{eq:errDN1} and solving for $A_i^k$, $B_i^k$ gives 
 \[\begin{aligned}
   A_i^k &= \frac{f_{\alpha,(i)}^{k-1}}{\sigma_i\cosh(a_i) + d_i\sinh(a_i)},\\
   B_i^k &= -\frac{f_{\alpha,(i)}^{k-1}\cosh(a_i)}{\Big(\sigma_i\cosh(a_i) + d_i\sinh(a_i)\Big)\Big(\sigma_i\sinh(b_i) + \omega_i\cosh(b_i) \Big)}, 
 \end{aligned}\]
where we let $a_i:=\sigma_i\alpha$ and $b_i:=\sigma_i (T-\alpha)$ to simplify the notations, and $a_i+b_i = \sigma_i T$. Using the update of the transmission condition~\eqref{eq:errDN1tran}, we obtain
 \[\begin{aligned}
   f_{\alpha,(i)}^{k}  = (1-\theta)f_{\alpha,(i)}^{k-1} + &\theta(\dot z_{2,(i)}^k(\alpha) + d_i z_{2,(i)}^k(\alpha))\\
   =(1-\theta)f_{\alpha,(i)}^{k-1} + &\theta B_i^k\Big((\omega_id_i-\sigma_i^2)\sinh(b_i)+\sigma_i(d_i-\omega_i)\cosh(b_i)\Big)\\
   =(1-\theta)f_{\alpha,(i)}^{k-1} + &\theta f_{\alpha,(i)}^{k-1}\nu^{-1}\frac{\sigma_i\gamma+\beta_i\tanh(b_i)}{\big(\sigma_i + d_i\tanh(a_i)\big)\big(\omega_i + \sigma_i\tanh(b_i)\big)}.
 \end{aligned}\]
which leads to the following result.

\begin{theorem}
  The algorithm DN$_1$~\eqref{eq:DN1}-\eqref{eq:DN1tran} converges if and only if
  \begin{equation}\label{eq:rhoDN1}
    \rho_{\text{DN}_1}:=\max_{d_i\in\lambda(A)}\Big|1-\theta\Big(1-\nu^{-1}\frac{\sigma_i\gamma+\beta_i\tanh(b_i)}{\big(\sigma_i + d_i\tanh(a_i)\big)\big(\omega_i + \sigma_i\tanh(b_i)\big)}\Big)\Big|<1,
  \end{equation}
  where $\lambda(A)$ is the spectrum of the matrix $A$.
\end{theorem}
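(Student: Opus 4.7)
The plan is to observe that because the system~\eqref{eq:sysODEreduced} decouples into independent scalar problems indexed by the eigenvalues $d_i$ of $A$, and because the algorithm~\eqref{eq:DN1}--\eqref{eq:DN1tran} only applies operations that respect this decoupling, the convergence of the iteration reduces to the convergence of each scalar mode separately. It then suffices to show that, for each mode $i$, the iteration for the interface datum $f_{\alpha,(i)}^k$ collapses to a scalar linear recurrence
\[
f_{\alpha,(i)}^k = \rho_i\, f_{\alpha,(i)}^{k-1},
\]
where $\rho_i$ is precisely the quantity whose modulus appears in~\eqref{eq:rhoDN1}.

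First, I would use the general solution~\eqref{eq:gensol} of the reduced formulation~\eqref{eq:z}. The initial condition $z_{1,(i)}^k(0)=0$ in $\Omega_1$ forces $z_{1,(i)}^k(t) = A_i^k \sinh(\sigma_i t)$, and the terminal Robin condition $\dot z_{2,(i)}^k(T) + \omega_i z_{2,(i)}^k(T)=0$ in $\Omega_2$ forces the form given in~\eqref{eq:zsol}. Imposing the Robin interface condition $\dot z_{1,(i)}^k(\alpha) + d_i z_{1,(i)}^k(\alpha) = f_{\alpha,(i)}^{k-1}$ would determine $A_i^k$, and the Neumann matching $\dot z_{2,(i)}^k(\alpha) = \dot z_{1,(i)}^k(\alpha)$ would determine $B_i^k$ in terms of $A_i^k$; both expressions are linear in $f_{\alpha,(i)}^{k-1}$.

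Next, I would substitute these coefficients into the relaxed update~\eqref{eq:errDN1tran} to obtain
\[
f_{\alpha,(i)}^k = \bigl(1-\theta + \theta\, g_i\bigr)\, f_{\alpha,(i)}^{k-1},
\]
where $g_i$ is a rational expression in $\sinh(a_i), \cosh(a_i), \sinh(b_i), \cosh(b_i)$. The key algebraic step is to simplify $g_i$ using the defining identities~\eqref{eq:sob}, in particular $\omega_i d_i - \sigma_i^2 = -\nu^{-1}\beta_i$ and $d_i - \omega_i = -\nu^{-1}\gamma$, and then to divide numerator and denominator by $\cosh(a_i)\cosh(b_i)$ to replace hyperbolic sines by $\tanh(a_i)$ and $\tanh(b_i)$. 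This is exactly the manipulation displayed above the theorem statement, and it produces
\[
g_i = \nu^{-1}\,\frac{\sigma_i\gamma+\beta_i\tanh(b_i)}{\bigl(\sigma_i + d_i\tanh(a_i)\bigr)\bigl(\omega_i + \sigma_i\tanh(b_i)\bigr)}.
\]

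The main (though essentially routine) obstacle will be this trigonometric bookkeeping; once it is done, the rest is immediate. The scalar recurrence converges to zero from every starting value if and only if $|1 - \theta(1 - g_i)| < 1$, and by modal independence the full decoupled iteration converges if and only if this inequality holds for every eigenvalue $d_i \in \lambda(A)$, which is exactly~\eqref{eq:rhoDN1}. Both directions of the equivalence follow from the fact that a nonzero initial component in any mode where $|1 - \theta(1 - g_i)|\geq 1$ would prevent the error from decaying.
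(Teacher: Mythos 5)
Your proposal is correct and follows essentially the same route as the paper: reduce to the $z$-formulation~\eqref{eq:errDN1}, use the general solutions~\eqref{eq:zsol} fixed by the initial and terminal conditions, determine $A_i^k$ and $B_i^k$ from the interface conditions, and substitute into the update~\eqref{eq:errDN1tran} to obtain the scalar recurrence $f_{\alpha,(i)}^k=\bigl(1-\theta(1-g_i)\bigr)f_{\alpha,(i)}^{k-1}$; the identities $\omega_i d_i-\sigma_i^2=-\nu^{-1}\beta_i$ and $d_i-\omega_i=-\nu^{-1}\gamma$ you cite are exactly the simplifications the paper performs. Your closing remark on modal independence makes the ``if and only if'' explicit, which the paper leaves implicit.
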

\begin{remark}
  Instead of focusing on the state $z_{(i)}$ for the analysis, we could also have focused on the state $\mu_{(i)}$, which gives the same result, see Appendix~\ref{sec:app1}.
\end{remark}
To get more insight in the convergence behavior, we consider a few special cases.
\begin{corollary}\label{cor:cvDN1cla}
If the matrix $A$ is not singular, then the algorithm DN$_1$~\eqref{eq:DN1}-\eqref{eq:DN1tran} for $\theta=1$ converges for all initial guesses.
\end{corollary}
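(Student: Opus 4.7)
The plan is to substitute $\theta=1$ into the convergence factor \eqref{eq:rhoDN1} and show that the resulting quantity
\[ \rho_{\text{DN}_1,i} \;=\; \nu^{-1}\,\frac{\sigma_i\gamma+\beta_i\tanh(b_i)}{\bigl(\sigma_i+d_i\tanh(a_i)\bigr)\bigl(\omega_i+\sigma_i\tanh(b_i)\bigr)} \]
satisfies $|\rho_{\text{DN}_1,i}|<1$ for every nonzero eigenvalue $d_i$ of $A$. Since $A$ is the spatial semi-discretization of $-\Delta$, we work in the physically relevant regime $d_i>0$, which gives $\sigma_i=\sqrt{d_i^2+\nu^{-1}}>d_i>0$, $\omega_i=\nu^{-1}\gamma+d_i>0$, and $T_a:=\tanh(a_i), T_b:=\tanh(b_i)\in(0,1)$.

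First I would establish positivity of numerator and denominator. Both denominator factors are clearly positive from $\sigma_i>|d_i|$ and $\omega_i>0$. For the numerator, I would rewrite $\sigma_i\gamma+\beta_i T_b=\gamma(\sigma_i-d_i T_b)+T_b$ and note that $\sigma_i>d_i>d_i T_b$ combined with $\gamma,T_b\geq 0$ gives positivity. Thus $\rho_{\text{DN}_1,i}>0$ and it suffices to show the denominator strictly exceeds $\nu^{-1}$ times the numerator.

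The key step is the algebraic simplification. I would expand the difference
\[ \bigl(\sigma_i+d_i T_a\bigr)\bigl(\omega_i+\sigma_i T_b\bigr)-\nu^{-1}\bigl(\sigma_i\gamma+\beta_i T_b\bigr), \]
substitute $\omega_i=\nu^{-1}\gamma+d_i$ and $\beta_i=1-\gamma d_i$, and crucially use the defining identity $\sigma_i^2-\nu^{-1}=d_i^2$ from \eqref{eq:sob} to collapse the $T_b$ terms. After cancellation I expect to obtain the clean factorization
\[ d_i\Bigl[\sigma_i(1+T_a T_b)+\omega_i(T_a+T_b)\Bigr], \]
in which the bracketed quantity is strictly positive (sum of positive terms). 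The main obstacle is making sure the terms involving $\gamma$ and $\nu^{-1}\gamma$ combine correctly; this requires careful bookkeeping but is a direct expansion.

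Finally, I would conclude: when $A$ is non-singular every $d_i$ is nonzero (and positive under the standing SPD assumption), so the difference above is strictly positive for each $i$, giving $\rho_{\text{DN}_1,i}<1$. Since the spectrum $\lambda(A)$ is finite, $\rho_{\text{DN}_1}=\max_i \rho_{\text{DN}_1,i}<1$, which by \eqref{eq:rhoDN1} implies convergence of the iteration \eqref{eq:DN1}--\eqref{eq:DN1tran} for any initial guess $f_{\alpha,(i)}^{0}$.
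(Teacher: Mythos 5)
Your proposal is correct and follows essentially the same route as the paper's proof: substitute $\theta=1$, establish positivity of numerator and denominator via the rewriting $\sigma_i\gamma+\beta_i\tanh(b_i)=\gamma(\sigma_i-d_i\tanh(b_i))+\tanh(b_i)$, and show the denominator exceeds $\nu^{-1}$ times the numerator by direct expansion using $\nu^{-1}=\sigma_i^2-d_i^2$; the factorization $d_i\bigl[\sigma_i(1+T_aT_b)+\omega_i(T_a+T_b)\bigr]$ you anticipate is exactly what the paper obtains (the paper merely regroups it further using the $\tanh$ addition formula). The only difference is cosmetic, so no further comment is needed.
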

\begin{proof}
Substituting $\theta=1$ into~\eqref{eq:rhoDN1}, we have
  \begin{equation}\label{eq:rhoDN_1}
    \rho_{\text{DN}_1}|_{\theta=1}=\nu^{-1}\max_{d_i\in\lambda(A)}\Big|\frac{\sigma_i\gamma+\beta_i\tanh(b_i)}{\big(\sigma_i + d_i\tanh(a_i)\big)\big(\omega_i + \sigma_i\tanh(b_i)\big)}\Big|.
  \end{equation}
  Using the definition of $\sigma_i, \beta_i$ and $\omega_i$ from~\eqref{eq:sob}, the numerator can be written as
   \[\sigma_i\gamma+\beta_i\tanh(b_i) = \gamma\big(\sigma_i - d_i\tanh(b_i)\big) + \tanh(b_i).\]
  Since $0<\tanh(x)<1$, $\forall x>0$ and $\sigma_i - d_i\tanh(b_i) >0$, both the numerator and the denominator in~\eqref{eq:rhoDN_1} are positive. Now the difference between the numerator and the denominator is
   \[\begin{aligned}
    &\big(\sigma_i + d_i\tanh(a_i)\big)\big(\omega_i + \sigma_i\tanh(b_i)\big)-\nu^{-1}\big(\sigma_i\gamma+\beta_i\tanh(b_i)\big)\\
     =&\big(\sigma_i + d_i\tanh(a_i)\big)\big(\omega_i + \sigma_i\tanh(b_i)\big)-\big(\sigma_i(\omega_i-d_i)+(\sigma_i^2-\omega_i d_i)\tanh(b_i)\big)\\
     =&\omega_id_i\big(\tanh(b_i)+\tanh(a_i)\big)+\sigma_id_i\big(1+\tanh(b_i)\tanh(a_i)\big)\\
     =&\big(1+\tanh(b_i)\tanh(a_i)\big)\big(\sigma_id_i + \omega_id_i\tanh(\sigma_i T)\big) >0,
   \end{aligned}\]
  meaning that for each eigenvalue $d_i$, 
   \[0<\nu^{-1}\frac{\sigma_i\gamma+\beta_i\tanh(b_i)}{\big(\sigma_i + d_i\tanh(a_i)\big)\big(\omega_i + \sigma_i\tanh(b_i)\big)}<1.\]
   This concludes the proof.
\end{proof}

\begin{remark}\label{rem:DN10}
  For the Laplace operator with homogeneous Dirichlet boundary
  conditions in our model problem~\eqref{eq:heat}, there is no zero
  eigenvalue for its discretization matrix $A$. If an eigenvalue
  $d_i=0$, we have $\sigma_i|_{d_i=0}=\sqrt{\nu^{-1}}$,
  $\omega_i|_{d_i=0}=\gamma\nu^{-1}$ and
  $\beta_i|_{d_i=0}=1$. Substituting these values into the convergence
  factor~\eqref{eq:rhoDN_1}, we find
   \[\rho_{\text{DN}_1}|_{\theta=1,d_i=0}=\nu^{-1}\frac{\sqrt{\nu^{-1}}\gamma+\tanh\left(\sqrt{\nu^{-1}} (T-\alpha)\right)}{\sqrt{\nu^{-1}} \big(\gamma\nu^{-1} + \sqrt{\nu^{-1}}\tanh\left(\sqrt{\nu^{-1}} (T-\alpha)\right)\big)} = 1,\]
and convergence is lost. The convergence behavior of the
  algorithm DN$_1$ for small eigenvalues is thus not
good. Furthermore, inserting $d_i=0$ into~\eqref{eq:rhoDN1} and
using the above result, we find that $\rho_{\text{DN}_1}|_{d_i=0}=1$,
independently of the relaxation parameter $\theta$ and the interface
position $\alpha$: relaxation can not fix this problem.
\end{remark}

\begin{remark}\label{rem:DN1lim}
  If some $d_i$ goes to infinity, we have $\sigma_i \sim_{\infty} d_i$ and $\omega_i\sim_{\infty} d_i$, and therefore
   \[\lim_{d_i\rightarrow\infty}\Big|1-\theta\Big(1-\nu^{-1}\frac{\sigma_i\gamma+\beta_i\tanh(b_i)}{\big(\sigma_i + d_i\tanh(a_i)\big)\big(\omega_i + \sigma_i\tanh(b_i)\big)}\Big)\Big| = |1-\theta|,\]
  which is independent of $\alpha$, so high frequency convergence is robust with relaxation. One can use $\theta=1$ to get a good smoother, with the following convergence factor estimate.
\end{remark}

  
\begin{corollary}\label{cor:cvDN1estim}
  If $A$ is positive semi-definite, then the algorithm
  DN$_1$~\eqref{eq:DN1}-\eqref{eq:DN1tran} with $\theta=1$ satisfies
  the convergence estimate $\rho_{\text{DN}_1}|_{\theta=1} \leq
  \frac{1+\gamma\sigma_{\min}}{\nu d_{\min}^2}$, with
  $d_{\min}:=\min\lambda(A)$ the smallest eigenvalue of $A$.
\end{corollary}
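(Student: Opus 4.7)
The plan is to bound the generic term inside the maximum in~\eqref{eq:rhoDN_1} by a simpler expression depending only on the eigenvalue $d_i$, and then maximize that expression explicitly over $\lambda(A)$. The previous corollary already established that both the numerator and denominator in~\eqref{eq:rhoDN_1} are strictly positive for $d_i\geq 0$, so the absolute value plays no role and I can work with signed quantities directly.

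For the denominator, since $A$ is positive semi-definite every eigenvalue satisfies $d_i\geq 0$, hence the terms $d_i\tanh(a_i)$ and $\sigma_i\tanh(b_i)$ are nonnegative and can be dropped:
\[
\bigl(\sigma_i + d_i\tanh(a_i)\bigr)\bigl(\omega_i + \sigma_i\tanh(b_i)\bigr) \geq \sigma_i\omega_i \geq d_i^2,
\]
using $\sigma_i=\sqrt{d_i^2+\nu^{-1}}\geq d_i$ and $\omega_i=\nu^{-1}\gamma+d_i\geq d_i$. For the numerator, I would use $0<\tanh(b_i)<1$ together with a trivial case split on the sign of $\beta_i=1-\gamma d_i$ (if $\beta_i\geq 0$ then $\beta_i\tanh(b_i)\leq\beta_i\leq 1$; if $\beta_i<0$ then $\beta_i\tanh(b_i)<0<1$) to get $\sigma_i\gamma+\beta_i\tanh(b_i)\leq 1+\gamma\sigma_i$. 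Combining these two estimates yields the per-eigenvalue bound
\[
\nu^{-1}\frac{\sigma_i\gamma+\beta_i\tanh(b_i)}{\bigl(\sigma_i + d_i\tanh(a_i)\bigr)\bigl(\omega_i + \sigma_i\tanh(b_i)\bigr)} \;\leq\; \frac{1+\gamma\sigma_i}{\nu d_i^2}.
\]

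The last step is to show that the function $g(d):=\bigl(1+\gamma\sqrt{d^2+\nu^{-1}}\bigr)/d^2$ is strictly decreasing on $(0,\infty)$, so that its maximum over the positive part of the spectrum is attained at $d_{\min}$, delivering the claimed bound $\rho_{\text{DN}_1}|_{\theta=1}\leq (1+\gamma\sigma_{\min})/(\nu d_{\min}^2)$. This monotonicity is the only mildly nontrivial piece: a short computation of $g'(d)$ shows the numerator of $g'$ equals $-d\bigl(\gamma d^2+2\gamma\nu^{-1}+2\sqrt{d^2+\nu^{-1}}\bigr)/\sqrt{d^2+\nu^{-1}}$, which is manifestly negative. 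All remaining estimates are direct applications of the elementary inequalities $\tanh<1$ and nonnegativity of the eigenvalues, so there is no real obstacle beyond this derivative check.
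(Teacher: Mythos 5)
Your proof is correct and follows essentially the same route as the paper: drop the absolute value via Corollary~\ref{cor:cvDN1cla}, bound the denominator below by $d_i^2$ and the numerator above by $1+\gamma\sigma_i$, then maximize $(1+\gamma\sigma_i)/(\nu d_i^2)$ over the spectrum at $d_{\min}$. The only cosmetic difference is the final monotonicity step, where the paper avoids your derivative computation by factoring $\frac{1+\gamma\sigma_i}{d_i^2}=\frac{1}{d_i}\bigl(\frac{1}{d_i}+\gamma\frac{\sigma_i}{d_i}\bigr)$ and observing that $\sigma_i/d_i=\sqrt{1+\nu^{-1}/d_i^2}$ is decreasing in $d_i$.
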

\begin{proof}
  Since for $\theta=1$, Corollary~\ref{cor:cvDN1cla} shows that the
  convergence factor is between 0 and 1 for each
  eigenvalue $d_i$, we can take~\eqref{eq:rhoDN_1} and remove the
  absolute value,
  \[\rho_{\text{DN}_1}|_{\theta=1}=\nu^{-1}\max_{d_i\in\lambda(A)}\frac{\tanh(b_i)+\gamma\big(\sigma_i- d_i\tanh(b_i)\big)}{\big(\sigma_i + d_i\tanh(a_i)\big)\big(\omega_i + \sigma_i\tanh(b_i)\big)}.\]
  Using the definition of $\sigma_i$ and $\omega_i$ from~\eqref{eq:sob}, we have $\sigma_i > d_i\geq 0$ and $\omega_i\geq d_i\geq 0$. Since $0<\tanh(x)<1$, $\forall x>0$, we obtain that $\sigma_i + d_i\tanh(a_i) \geq d_i$, $\omega_i + \sigma_i\tanh(b_i) \geq d_i$ and $\sigma_i- d_i\tanh(b_i) \leq \sigma_i$. This implies
   \[\frac{\tanh(b_i)+\gamma\big(\sigma_i- d_i\tanh(b_i)\big)}{\big(\sigma_i + d_i\tanh(a_i)\big)\big(\omega_i + \sigma_i\tanh(b_i)\big)}\leq\frac{1+\gamma\sigma_i}{d_i^2} = \frac1{d_i}(\frac1{d_i}+\gamma\frac{\sigma_i}{d_i}).\]
  Using once again the definition of $\sigma_i$ from~\eqref{eq:sob}, we find
   \[\frac{\sigma_i}{d_i} = \sqrt{1+\frac{\nu^{-1}}{d_i^2}}\leq\sqrt{1+\frac{\nu^{-1}}{d_{\min}^2}}.\]
  Hence, we have 
   \[\frac{1+\gamma\sigma_i}{d_i^2}\leq \frac{1+\gamma\sigma_{\min}}{d_{\min}^2},\]
  which concludes the proof.
\end{proof}
Since $A$ comes from a spatial discretization, the smallest
eigenvalue of $A$ depends only little on the spatial mesh size,
  and convergence is thus robust under mesh refinement. Corollary
  \ref{cor:cvDN1estim} is however less useful when $\nu$ is small: for
  example for $\gamma=0$, the bound is less than one only if
  $\nu>\frac1{d_{\min}^2}$, but we have also the following
  convergence result.
\begin{theorem}
  The algorithm DN$_1$~\eqref{eq:DN1}-\eqref{eq:DN1tran} converges for
  all initial guesses under the assumption that the matrix $A$ is not
  singular.
\end{theorem}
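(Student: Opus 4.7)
The plan is to upgrade Corollary~\ref{cor:cvDN1cla} from the special case $\theta=1$ to any relaxation parameter $\theta\in(0,1)$ by exploiting the pointwise bound that was already established in the proof of that corollary. To this end I would rewrite the convergence factor~\eqref{eq:rhoDN1} as
\[
  \rho_{\text{DN}_1} = \max_{d_i\in\lambda(A)}\bigl|1-\theta\bigl(1-g(d_i)\bigr)\bigr|,
  \qquad
  g(d_i):=\nu^{-1}\frac{\sigma_i\gamma+\beta_i\tanh(b_i)}{\bigl(\sigma_i+d_i\tanh(a_i)\bigr)\bigl(\omega_i+\sigma_i\tanh(b_i)\bigr)},
\]
so that each spectral contribution is parametrized by a single scalar $g(d_i)$.

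Next I would invoke the two-sided bound $0<g(d_i)<1$ for every nonzero eigenvalue $d_i$, which is precisely what the proof of Corollary~\ref{cor:cvDN1cla} establishes: the positivity of both numerator and denominator, together with the identity showing that their difference equals $(1+\tanh(a_i)\tanh(b_i))(\sigma_i d_i+\omega_i d_i\tanh(\sigma_i T))$, gives strict inclusion of $g(d_i)$ in $(0,1)$. The hypothesis that $A$ is non-singular ensures this inequality applies to every $d_i\in\lambda(A)$, so $1-g(d_i)\in(0,1)$ uniformly in $i$.

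From here the conclusion is essentially algebraic. Since the relaxation satisfies $\theta\in(0,1)$ by~\eqref{eq:DN1tran}, the product $\theta(1-g(d_i))$ lies in $(0,1)$, hence $1-\theta(1-g(d_i))\in(0,1)$ and its absolute value is strictly less than one for every eigenvalue. Because $\lambda(A)$ is a finite set, the maximum is attained, so $\rho_{\text{DN}_1}<1$ and the algorithm converges for all initial guesses.

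There is no real obstacle beyond correctly reusing the inequality from Corollary~\ref{cor:cvDN1cla}; the only subtle point is to note that finiteness of $\lambda(A)$ turns the strict pointwise inequality $|1-\theta(1-g(d_i))|<1$ into a strict inequality for the maximum, whereas an accumulation point of eigenvalues at $d_i=0$ (ruled out here by the non-singularity assumption) would have degraded $g(d_i)\to 1$ and destroyed convergence, exactly as already noted in Remark~\ref{rem:DN10}.
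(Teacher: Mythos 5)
Your argument is correct and follows essentially the same route as the paper: both reduce the claim to the bound $0<g(d_i)<1$ established in the proof of Corollary~\ref{cor:cvDN1cla} and then observe that $1-\theta(1-g(d_i))\in(0,1)$ for $\theta\in(0,1)$. Your pointwise-per-eigenvalue phrasing and the explicit remark about finiteness of $\lambda(A)$ are slightly more careful than the paper's one-line computation, but the substance is identical.
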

\begin{proof}
From Corollary~\ref{cor:cvDN1cla}, we know that the convergence factor
satisfies $0<\rho_{\text{DN}_1}|_{\theta=1}<1$. Using its definition
\eqref{eq:rhoDN1}, we find for $\theta\in(0,1)$,
 \[0<1-\theta<\rho_{\text{DN}_1} = 1 - \theta (1-\rho_{\text{DN}_1}|_{\theta=1})<1.\]
which concludes the proof.
\end{proof}

\begin{remark}
As shown in the previous proof, the function $g(\theta):=1 - \theta (1-\rho_{\text{DN}_1}|_{\theta=1})$ is decreasing for $\theta\in(0,1)$, which makes $\theta=1$ the best relaxation parameter. This is further confirmed by our numerical experiments (see Figure~\ref{fig:cv1_5th}). Due to the bad convergence behavior of the algorithm DN$_1$ for small eigenvalues, it only makes this most natural DN algorithm a good smoother but not a good solver.
\end{remark}

\subsubsection{Neumann-Dirichlet algorithm \texorpdfstring{(ND$_1$)}{ND1}}

We now invert the two conditions, and apply the Neumann condition
to the state $\mu_{(i)}$ in $\Omega_1$ and the Dirichlet condition to
the state $z_{(i)}$ in $\Omega_2$, still respecting the forward-backward structure. For iteration index $k=1,2,\ldots$, the algorithm ND$_1$ computes
\begin{equation}\label{eq:ND1}
  \begin{aligned}
    &\left\{
      \begin{aligned}
        \begin{pmatrix}
          \dot z_{1,(i)}^k\\
          \dot \mu_{1,(i)}^k
        \end{pmatrix}
        +
        \begin{pmatrix}
          d_i & -\nu^{-1} \\
          -1 & -d_i
        \end{pmatrix}
        \begin{pmatrix}
          z_{1,(i)}^k\\
          \mu_{1,(i)}^k
        \end{pmatrix}
        &=
        \begin{pmatrix}
          0\\
          0
        \end{pmatrix} \text{ in } \Omega_1,\\
        z_{1,(i)}^k(0) &= 0,\\
        \dot\mu_{1,(i)}^k(\alpha) & = \dot\mu_{2,(i)}^k(\alpha),
      \end{aligned}
    \right.\\
    &\left\{
      \begin{aligned}
        \begin{pmatrix}
          \dot z_{2,(i)}^k\\
          \dot \mu_{2,(i)}^k
        \end{pmatrix}
        +
        \begin{pmatrix}
          d_i & -\nu^{-1} \\
          -1 & -d_i
        \end{pmatrix}
        \begin{pmatrix}
          z_{2,(i)}^k\\
          \mu_{2,(i)}^k
        \end{pmatrix}
        &=
        \begin{pmatrix}
          0\\
          0
        \end{pmatrix} \text{ in } \Omega_2,\\
        z_{2,(i)}^k(\alpha) &= f_{\alpha,(i)}^{k-1},\\
        \mu_{2,(i)}^k(T) + \gamma z_{2,(i)}^k(T)&=0,
      \end{aligned}
    \right.
  \end{aligned}
\end{equation}
and we update the transmission condition by 
\begin{equation}\label{eq:ND1tran}
  f_{\alpha,(i)}^{k}:= (1-\theta)f_{\alpha,(i)}^{k-1} + \theta z_{1,(i)}^k(\alpha), \quad\theta\in(0,1).
\end{equation}
For the convergence analysis, we choose to use the
  formulation~\eqref{eq:mu}, i.e.
\begin{equation}\label{eq:errND1}
  \left\{
    \begin{aligned}
      \ddot \mu_{1,(i)}^k - \sigma_i^2 \mu_{1,(i)}^k &= 0 \text{ in } \Omega_1,\\
      \dot \mu_{(i)}(0)-d_i\mu_{(i)}(0) &= 0, \\
      \dot \mu_{1,(i)}^k(\alpha) &= \dot \mu_{2,(i)}^k(\alpha),
    \end{aligned}
  \right.
  \quad
  \left\{
    \begin{aligned}
      \ddot \mu_{2,(i)}^k - \sigma_i^2 \mu_{2,(i)}^k &= 0 \text{ in } \Omega_2,\\
      \dot \mu_{2,(i)}^k(\alpha) - d_i\mu_{2,(i)}^k(\alpha) &= f_{\alpha,(i)}^{k-1},\\
      \gamma \dot \mu_{(i)}(T)+\beta_i\mu_{(i)}(T)&=0,
    \end{aligned}
  \right.
\end{equation}
where the update of the transmission condition~\eqref{eq:ND1tran} becomes 
\begin{equation}\label{eq:errND1tran}
  f_{\alpha,(i)}^{k}= (1-\theta)f_{\alpha,(i)}^{k-1} + \theta \big(\dot \mu_{1,(i)}^k(\alpha) - d_i\mu_{1,(i)}^k(\alpha)\big), \quad\theta\in(0,1). 
\end{equation}
The algorithm ND$_1$~\eqref{eq:ND1} can thus be interpreted as
    a NR type algorithm~\eqref{eq:errND1}. 
  Using the general solution~\eqref{eq:gensol} and the initial
  and final conditions, we get
\begin{equation}\label{eq:musol}
  \begin{aligned}
    \mu_{1,(i)}^k(t) &= A_i^k \big(\sigma_i\cosh(\sigma_i t) + d_i\sinh(\sigma_i t)\big),\\
    \mu_{2,(i)}^k(t) &= B_i^k \Big(\gamma\sigma_i\cosh\big(\sigma_i (T-t)\big) + \beta_i\sinh\big(\sigma_i (T-t)\big) \Big),
  \end{aligned}
\end{equation}
and from the transmission condition in~\eqref{eq:errND1} on each
domain, and we obtain
 \[\begin{aligned}
   A_i^k&= \frac{f_{\alpha,(i)}^{k-1}\big(\sigma_i\gamma\sinh(b_i)+\beta_i\cosh(b_i)\big)}{\big(\omega_i\sinh(b_i) +\sigma_i\cosh(b_i)\big)\big(\sigma_i\sinh(a_i) + d_i\cosh(a_i)\big)},\\ B_i^k&=\frac{-f_{\alpha,(i)}^{k-1}}{\omega_i\sinh(b_i) +\sigma_i\cosh(b_i) }.
 \end{aligned}\]
Using the relation~\eqref{eq:errND1tran}, we find
 \[\begin{aligned}
   f_{\alpha,(i)}^{k} &= (1-\theta)f_{\alpha,(i)}^{k-1} +\theta(\dot \mu_{1,(i)}^k(\alpha) - d_i\mu_{1,(i)}^k(\alpha))\\
   &=(1-\theta)f_{\alpha,(i)}^{k-1} +\theta A_i^k\nu^{-1}\sinh(a_i)\\
   &=(1-\theta)f_{\alpha,(i)}^{k-1} + \theta f_{\alpha,(i)}^{k-1}\nu^{-1}\frac{\sigma_i\gamma + \beta_i\coth(b_i)}{\big(\sigma_i + d_i\coth(a_i)\big)\big(\omega_i+\sigma_i\coth(b_i)\big)}.
 \end{aligned}\]
which leads to the following result.

\begin{theorem}
  The algorithm ND$_1$~\eqref{eq:ND1}-\eqref{eq:ND1tran} converges if and only if
  \begin{equation}\label{eq:rhoND1}
    \rho_{\text{ND}_1}:=\max_{d_i\in\lambda(A)}\Big|1-\theta\Big(1-\nu^{-1}\frac{\sigma_i\gamma + \beta_i\coth(b_i)}{\big(\sigma_i + d_i\coth(a_i)\big)\big(\omega_i+\sigma_i\coth(b_i)\big)}\Big)\Big|<1.
  \end{equation}
\end{theorem}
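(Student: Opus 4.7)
The plan is to diagonalize the iteration along the eigenmodes of $A$ and reduce ND$_1$ to a scalar linear recurrence $f_{\alpha,(i)}^k=\rho_i f_{\alpha,(i)}^{k-1}$ on the interface data, whose amplification factor $\rho_i$ will be the mode-$i$ quantity inside the maximum in~\eqref{eq:rhoND1}. Convergence of the full iteration for every initial guess then becomes equivalent to $|\rho_i|<1$ uniformly in $i\in\{1,\ldots,n\}$, which is the stated bound.

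First I would recast the coupled algorithm ND$_1$ as a Neumann-Robin algorithm on the scalar equation~\eqref{eq:mu}, using the identity $z_{(i)}=\dot\mu_{(i)}-d_i\mu_{(i)}+\hat z_{(i)}$ from~\eqref{eq:2id} with $\hat z_{(i)}=0$: the initial condition $z_{1,(i)}^k(0)=0$ becomes $\dot\mu_{1,(i)}^k(0)-d_i\mu_{1,(i)}^k(0)=0$, the Dirichlet transmission $z_{2,(i)}^k(\alpha)=f_{\alpha,(i)}^{k-1}$ becomes the Robin condition $\dot\mu_{2,(i)}^k(\alpha)-d_i\mu_{2,(i)}^k(\alpha)=f_{\alpha,(i)}^{k-1}$, and the relaxed update of $z_{1,(i)}^k(\alpha)$ becomes the relaxed update of $\dot\mu_{1,(i)}^k(\alpha)-d_i\mu_{1,(i)}^k(\alpha)$; this produces~\eqref{eq:errND1}--\eqref{eq:errND1tran}, so that ND$_1$ is a Neumann-Robin iteration on~\eqref{eq:mu}.

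Next, starting from the general solution~\eqref{eq:gensol} and enforcing the initial condition on $\Omega_1$ and the final condition on $\Omega_2$ yields the representations~\eqref{eq:musol} with a single scalar unknown $A_i^k$ on $\Omega_1$ and $B_i^k$ on $\Omega_2$. The Robin condition at $\alpha$ on $\Omega_2$ fixes $B_i^k$ as a scalar multiple of $f_{\alpha,(i)}^{k-1}$, and then the Neumann matching $\dot\mu_{1,(i)}^k(\alpha)=\dot\mu_{2,(i)}^k(\alpha)$ determines $A_i^k$ similarly. Evaluating the Robin trace of $\mu_{1,(i)}^k$ at $\alpha$, the identity $\sigma_i^2-d_i^2=\nu^{-1}$ collapses it to $A_i^k\nu^{-1}\sinh(a_i)$, so~\eqref{eq:errND1tran} takes the form $f_{\alpha,(i)}^k=\rho_i f_{\alpha,(i)}^{k-1}$ with $\rho_i$ precisely the argument of the maximum in~\eqref{eq:rhoND1}; the $\coth$ normal form arises after dividing numerator and denominator of the rational factor by $\sinh(a_i)\sinh(b_i)$. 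I expect the only real obstacle to be the algebraic bookkeeping needed to reach this normal form; the conclusion then follows by maximizing $|\rho_i|$ over $d_i\in\lambda(A)$, since the $n$ modes are decoupled after the diagonalization $A=PDP^{-1}$.
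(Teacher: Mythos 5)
Your proposal follows the paper's own argument essentially step for step: the same reinterpretation of ND$_1$ as the Neumann--Robin iteration~\eqref{eq:errND1}--\eqref{eq:errND1tran} on the $\mu_{(i)}$-formulation~\eqref{eq:mu}, the same one-parameter representations~\eqref{eq:musol} on each subdomain, the same collapse of the Robin trace to $A_i^k\nu^{-1}\sinh(a_i)$ via $\sigma_i^2-d_i^2=\nu^{-1}$, and the same reduction to a decoupled scalar recurrence whose amplification factor is the quantity inside the maximum in~\eqref{eq:rhoND1}. The argument is correct and matches the paper's proof.
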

The convergence factor of the algorithm ND$_1$~\eqref{eq:rhoND1} is
very similar to that of DN$_1$~\eqref{eq:rhoDN1}. For instance, the
behavior for large and small eigenvalues shown in
Remarks~\ref{rem:DN10} and \ref{rem:DN1lim} still hold:
when inserting $d_i=0$ into~\eqref{eq:rhoND1} we find
 \[\rho_{\text{ND}_1}|_{d_i=0}=\Big|1-\theta\Big(1-\nu^{-1}\frac{\sqrt{\nu^{-1}}\gamma+\coth\left(\sqrt{\nu^{-1}} (T-\alpha)\right)}{\sqrt{\nu^{-1}} \big(\gamma\nu^{-1} + \sqrt{\nu^{-1}}\coth\left(\sqrt{\nu^{-1}} (T-\alpha)\right)\big)}\Big)\Big| = 1,\]
again independent of the relaxation parameter $\theta$ and the interface position $\alpha$; and when the eigenvalue $d_i$ goes to infinity, we find
 \[\lim_{d_i\rightarrow\infty}\Big|1-\theta\Big(1-\nu^{-1}\frac{\sigma_i\gamma+\beta_i\coth(b_i)}{\big(\sigma_i + d_i\coth(a_i)\big)\big(\omega_i + \sigma_i\coth(b_i)\big)}\Big)\Big| = |1-\theta|,\]
again independent of the interface position $\alpha$.
Due however to the presence of the hyperbolic cotangent function
in~\eqref{eq:rhoND1} instead of the hyperbolic tangent function
in~\eqref{eq:rhoDN1}, we need further assumptions to obtain
results like Corollaries~\ref{cor:cvDN1cla} and
\ref{cor:cvDN1estim}. Indeed, substituting $\theta=1$
into~\eqref{eq:rhoND1} and using the definition of $\sigma_i$, $\beta_i$
from~\eqref{eq:sob}, the numerator reads
 \[\begin{aligned}
   \sigma_i\gamma + \beta_i\coth(b_i)=\gamma\Big(\sqrt{d_i^2+\nu^{-1}}-d_i\coth\big(\sqrt{d_i^2+\nu^{-1}}&(T-\alpha)\big)\Big) \\
   + &\coth\big(\sqrt{d_i^2+\nu^{-1}}(T-\alpha)\big).
 \end{aligned}\]
Depending on $\gamma,\nu$ and $\alpha$, this value could be negative. However, by setting $\gamma=0$, the numerator is guaranteed to be positive, and we obtain the following results.

\begin{corollary}\label{cor:cvND1cla}
  If $A$ is not singular and the parameter $\gamma=0$, then
  the algorithm ND$_1$~\eqref{eq:ND1}-\eqref{eq:ND1tran} for
  $\theta=1$ converges for all initial guesses.
\end{corollary}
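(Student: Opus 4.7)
The plan is to follow the template of Corollary~\ref{cor:cvDN1cla}, with $\tanh$ replaced by $\coth$ throughout. First I substitute $\theta=1$ into the convergence factor~\eqref{eq:rhoND1}; since $\gamma=0$ gives $\omega_i=d_i$ and $\beta_i=1$ from~\eqref{eq:sob}, the expression reduces to
\[
\rho_{\text{ND}_1}|_{\theta=1,\gamma=0}=\nu^{-1}\max_{d_i\in\lambda(A)}\Big|\frac{\coth(b_i)}{\bigl(\sigma_i+d_i\coth(a_i)\bigr)\bigl(d_i+\sigma_i\coth(b_i)\bigr)}\Big|.
\]
The goal is then to show that for each nonzero eigenvalue $d_i$ the fraction lies strictly in $(0,1)$, so that the maximum is strictly less than one.

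I verify next that the numerator and both factors of the denominator are positive so that the absolute value can be dropped. The numerator $\coth(b_i)>0$ is immediate, and since $\sigma_i=\sqrt{d_i^2+\nu^{-1}}>|d_i|$ and the eigenvalues of the discrete Laplacian are positive, both factors $\sigma_i+d_i\coth(a_i)$ and $d_i+\sigma_i\coth(b_i)$ are positive. I then compute the difference between the denominator and $\nu^{-1}$ times the numerator. Expanding and using the identity $\sigma_i^2-\nu^{-1}=d_i^2$ to absorb $\nu^{-1}\coth(b_i)$ into $\sigma_i^2\coth(b_i)$, the difference collapses to
\[
d_i\bigl[\sigma_i\bigl(1+\coth(a_i)\coth(b_i)\bigr)+d_i\bigl(\coth(a_i)+\coth(b_i)\bigr)\bigr].
\]
The addition formula $\coth(a_i+b_i)=\frac{1+\coth(a_i)\coth(b_i)}{\coth(a_i)+\coth(b_i)}$ with $a_i+b_i=\sigma_i T$ lets me factor the bracket as $(\coth(a_i)+\coth(b_i))(\sigma_i\coth(\sigma_i T)+d_i)$, which is strictly positive for $d_i>0$. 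Hence the fraction is strictly less than one for each eigenvalue, yielding $\rho_{\text{ND}_1}|_{\theta=1,\gamma=0}<1$.

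The main subtlety is a sign issue that is absent in the DN$_1$ case: for general $\gamma>0$ the numerator $\sigma_i\gamma+\beta_i\coth(b_i)$ need not be positive since $\beta_i=1-\gamma d_i$ can change sign, and this is precisely why the hypothesis $\gamma=0$ is imposed. Once this restriction is made, the algebraic manipulations are a direct translation of those in Corollary~\ref{cor:cvDN1cla}, and the only technical step worth a moment of care is verifying that the cancellation via $\sigma_i^2-\nu^{-1}=d_i^2$ leaves exactly the factor $d_i$ out front, which is what drives the final positivity.
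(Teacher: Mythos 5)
Your proof is correct and follows essentially the same route as the paper: substitute $\theta=1$, $\gamma=0$ into~\eqref{eq:rhoND1}, verify positivity of numerator and denominator, and show the denominator exceeds $\nu^{-1}$ times the numerator by expanding with $\nu^{-1}=\sigma_i^2-d_i^2$ and factoring via the $\coth$ addition formula to get $(\coth(a_i)+\coth(b_i))(d_i^2+\sigma_i d_i\coth(\sigma_i T))>0$. The remark about why $\gamma=0$ is needed (possible sign change of $\sigma_i\gamma+\beta_i\coth(b_i)$) also matches the paper's discussion preceding the corollary.
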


\begin{proof}
  Substituting $\theta=1$ and $\gamma=0$ into~\eqref{eq:rhoND1}, we get
  \begin{equation}\label{eq:rhoND_1}
    \rho_{\text{ND}_2}|_{\theta=1}=\nu^{-1}\max_{d_i\in\lambda(A)}\Big|\frac{\coth(b_i)}{\big(\sigma_i + d_i\coth(a_i)\big)\big(d_i + \sigma_i\coth(b_i)\big)}\Big|.
  \end{equation}
  Since $\coth(x)>1$, $\forall x>0$, both the numerator and the denominator in~\eqref{eq:rhoND_1} are positive, and the difference between them is
  \[\begin{aligned}
     &\big(\sigma_i + d_i\coth(a_i)\big)\big(d_i + \sigma_i\coth(b_i)\big)-\nu^{-1}\coth(b_i)\\
     =&\big(\sigma_i + d_i\coth(a_i)\big)\big(d_i + \sigma_i\coth(b_i)\big)-(\sigma_i^2- d_i^2)\coth(b_i)\\
     =&d_i^2\big(\coth(b_i)+\coth(a_i)\big)+\sigma_id_i\big(1+\coth(b_i)\coth(a_i)\big)\\
    =&\big(\coth(a_i)+\coth(b_i)\big)\big(d_i^2 + \sigma_id_i\coth(\sigma_i T)\big) >0,
   \end{aligned}\]
  meaning that for each eigenvalue $d_i$, 
   \[0<\nu^{-1}\frac{\coth(b_i)}{\big(\sigma_i + d_i\coth(a_i)\big)\big(\omega_i + \sigma_i\coth(b_i)\big)}<1,\]
  which concludes the proof.
\end{proof}

\begin{corollary}
  If $A$ is positive semi-definite and the parameter $\gamma=0$,
  then the algorithm ND$_1$~\eqref{eq:ND1}-\eqref{eq:ND1tran} for
  $\theta=1$ satisfies the convergence estimate
  \begin{equation}\label{eq:ND1upbound}
    \rho_{\text{ND}_1}|_{\theta=1} \leq \frac{\coth\big(\sigma_{\min}(T-\alpha)\big)}{\nu(\sigma_{\min}+d_{\min})^2}.
  \end{equation}
\end{corollary}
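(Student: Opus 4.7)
The plan is to start from the closed-form expression for the convergence factor obtained in the proof of Corollary~\ref{cor:cvND1cla} (with $\gamma=0$, so that $\omega_i=d_i$, $\beta_i=1$, and the numerator collapses to $\coth(b_i)$), namely
\[
\rho_{\text{ND}_1}|_{\theta=1}=\nu^{-1}\max_{d_i\in\lambda(A)}\frac{\coth(b_i)}{\big(\sigma_i+d_i\coth(a_i)\big)\big(d_i+\sigma_i\coth(b_i)\big)}.
\]
Corollary~\ref{cor:cvND1cla} already established that each term in this maximum is positive, so the absolute value may be dropped. I will now upper bound the numerator and lower bound the denominator \emph{uniformly} in $d_i$.

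For the denominator, I would use the elementary bound $\coth(x)>1$ for $x>0$, valid since $a_i,b_i>0$. Combined with $d_i\geq 0$ (from $A$ being positive semi-definite), this gives $\sigma_i+d_i\coth(a_i)\geq \sigma_i+d_i$ and $d_i+\sigma_i\coth(b_i)\geq d_i+\sigma_i$, so that the denominator is at least $(\sigma_i+d_i)^2$. For the numerator, I would exploit that $\sigma_i=\sqrt{d_i^2+\nu^{-1}}$ is increasing in $d_i$, hence $\sigma_i\geq\sigma_{\min}$ and therefore $b_i=\sigma_i(T-\alpha)\geq\sigma_{\min}(T-\alpha)$; since $\coth$ is strictly decreasing on $(0,\infty)$, this yields $\coth(b_i)\leq\coth(\sigma_{\min}(T-\alpha))$.

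Putting these two estimates together, the quotient inside the maximum is bounded by
\[
\frac{\coth(b_i)}{(\sigma_i+d_i)^2}\leq\frac{\coth(\sigma_{\min}(T-\alpha))}{(\sigma_{\min}+d_{\min})^2},
\]
where the second inequality uses once more the monotonicity of $\sigma_i$ in $d_i$ to replace $(\sigma_i+d_i)^2$ by its minimum $(\sigma_{\min}+d_{\min})^2$. Multiplying by $\nu^{-1}$ and taking the maximum over $d_i\in\lambda(A)$ then gives~\eqref{eq:ND1upbound}.

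No serious obstacle is anticipated here; the argument is simply a monotonicity estimate analogous to the one used in Corollary~\ref{cor:cvDN1estim}. The one subtlety worth highlighting is that both the numerator $\coth(b_i)$ and the denominator $(\sigma_i+d_i)^2$ are simultaneously monotone in $d_i$ in the \emph{same} favorable direction (numerator decreasing, denominator increasing), which is precisely what allows the worst case to be located at $d_i=d_{\min}$ without any further case analysis. The assumption $\gamma=0$ is essential in this step because, as noted just before the statement, a nonzero $\gamma$ can change the sign of $\sigma_i\gamma+\beta_i\coth(b_i)$ and destroy the uniform positivity needed to drop the absolute value.
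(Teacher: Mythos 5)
Your argument is correct and follows essentially the same route as the paper's proof: drop the absolute value using the positivity from Corollary~\ref{cor:cvND1cla}, bound the denominator below by $(\sigma_i+d_i)^2$ via $\coth(x)>1$ and $d_i\geq 0$, and bound the numerator above by $\coth(\sigma_{\min}(T-\alpha))$ using the monotonicity of $\sigma_i$ in $d_i$ together with the fact that $\coth$ is decreasing. The only difference is that you spell out the monotonicity step for $\coth(b_i)$ explicitly, which the paper leaves implicit.
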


\begin{proof}
  Since we have shown in Corollary~\ref{cor:cvND1cla} that the
  convergence factor is between 0 and 1 for each eigenvalue $d_i$, we
  can take~\eqref{eq:rhoND_1} and remove the absolute value,
  \[\rho_{\text{ND}_2}|_{\theta=1}=\nu^{-1}\max_{d_i\in\lambda(A)}\frac{\coth(b_i)}{\big(\sigma_i + d_i\coth(a_i)\big)\big(d_i + \sigma_i\coth(b_i)\big)}.\]
  Since $\sigma_i = \sqrt{d_i^2+\nu^{-1}} \geq d_i\geq 0$ and $\coth(x)>1$, $\forall x>0$, we obtain that $\sigma_i + d_i\coth(a_i) \geq \sigma_i+d_i$ and $d_i + \sigma_i\coth(b_i) \geq \sigma_i+d_i$. This implies that
  \[\frac{\coth(b_i)}{\big(\sigma_i + d_i\coth(a_i)\big)\big(d_i + \sigma_i\coth(b_i)\big)}\leq\frac{\coth(b_i)}{(\sigma_i+d_i)^2}.\]
  Recalling $\coth(b_i)=\coth(\sigma_i(T-\alpha))$, and using the fact that $d_i\geq d_{\min}$ and $\sigma_i\geq \sigma_{\min}:=\sqrt{d_{\min}^2+\nu^{-1}}$, we find 
  \[\frac{\coth(b_i)}{(\sigma_i+d_i)^2} \leq \frac{\coth(\sigma_{\min}(T-\alpha))}{(\sigma_{\min}+d_{\min})^2},\]
  which concludes the proof.
\end{proof}
Like for DN$_1$, the estimate~\eqref{eq:ND1upbound} is
independent of the spatial mesh size, and since for
  $\gamma=0$, the convergence factor satisfies
$0<\rho_{\text{ND}_1}|_{\theta=1}<1$ as shown in
Corollary~\ref{cor:cvND1cla}, using the definition of the convergence
factor~\eqref{eq:rhoND1}, we obtain the following result.
\begin{theorem}
  The algorithm ND$_1$~\eqref{eq:ND1}-\eqref{eq:ND1tran} converges for
  all initial guesses if $\gamma=0$ and the matrix $A$ is not
  singular.
\end{theorem}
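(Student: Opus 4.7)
The plan is to mirror the proof of the analogous convergence theorem already established for DN$_1$. First, I would invoke Corollary~\ref{cor:cvND1cla}, which under the hypotheses $\gamma=0$ and $A$ non-singular asserts that for every eigenvalue $d_i$ of $A$ the quantity
\[
F_i := \nu^{-1}\frac{\sigma_i\gamma + \beta_i\coth(b_i)}{(\sigma_i + d_i\coth(a_i))(\omega_i + \sigma_i\coth(b_i))}
\]
lies strictly in $(0,1)$; equivalently, $0 < \rho_{\text{ND}_1}|_{\theta=1} < 1$.

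Second, I would exploit the affine dependence of the per-eigenvalue contribution in~\eqref{eq:rhoND1} on the relaxation parameter $\theta$. Since $1-F_i \in (0,1)$, for each $\theta \in (0,1)$ the expression $1 - \theta(1-F_i)$ belongs to $(1-\theta, 1)$, which is a subset of $(0,1)$. Consequently the absolute value in~\eqref{eq:rhoND1} can be dropped at each $d_i$, and taking the maximum over the spectrum gives the identity
\[
\rho_{\text{ND}_1} \;=\; 1 - \theta\bigl(1 - \rho_{\text{ND}_1}|_{\theta=1}\bigr) \;\in\; (1-\theta,\, 1),
\]
since the maximum of a family of affine functions with the same sign of slope is again affine with the corresponding extreme slope. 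In particular $\rho_{\text{ND}_1} < 1$ for every $\theta \in (0,1]$, which is exactly convergence from arbitrary initial guesses.

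No real obstacle is expected: the entire argument is a direct transcription of the DN$_1$ proof, with Corollary~\ref{cor:cvND1cla} taking the place of Corollary~\ref{cor:cvDN1cla}; the substantive work (namely, showing positivity of $F_i$ and that $F_i < 1$ via the numerator--denominator difference expansion) is already carried out inside those corollaries. The only point to verify carefully is that the sign control on $1-F_i$ holds uniformly over all eigenvalues, but this is immediate from the strict inequalities in Corollary~\ref{cor:cvND1cla}.
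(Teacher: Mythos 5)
Your proposal is correct and follows essentially the same route as the paper: the paper likewise deduces the theorem directly from Corollary~\ref{cor:cvND1cla} (which gives $0<\rho_{\text{ND}_1}|_{\theta=1}<1$ for $\gamma=0$ and $A$ nonsingular) together with the affine dependence of~\eqref{eq:rhoND1} on $\theta$, exactly as in the corresponding DN$_1$ convergence theorem.
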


\subsection{Category II}
We now study algorithms in Category II which run only on
the state $z_{(i)}$ to solve the problem~\eqref{eq:sysODEreduced}, based on
DN and ND techniques.

\subsubsection{Dirichlet-Neumann algorithm \texorpdfstring{(DN$_2$)}{DN2}}
As explained in Table~\ref{tab:combination}, we apply the Dirichlet condition in $\Omega_1$ and the Neumann condition in $\Omega_2$ both on the primal state $z_{(i)}$. For the iteration index $k=1,2,\ldots,$ the algorithm DN$_2$ solves
\begin{equation}\label{eq:DN2}
  \begin{aligned}
    &\left\{
      \begin{aligned}
        \begin{pmatrix}
          \dot z_{1,(i)}^k\\
          \dot \mu_{1,(i)}^k
        \end{pmatrix}
        +
        \begin{pmatrix}
          d_i & -\nu^{-1} \\
          -1 & -d_i
        \end{pmatrix}
        \begin{pmatrix}
          z_{1,(i)}^k\\
          \mu_{1,(i)}^k
        \end{pmatrix}
        &=
        \begin{pmatrix}
          0\\
          0
        \end{pmatrix} \text{ in } \Omega_1,\\
        z_{1,(i)}^k(0) &= 0,\\
        z_{1,(i)}^k(\alpha) & = f_{\alpha,(i)}^{k-1},
      \end{aligned}
    \right.\\
    &\left\{
      \begin{aligned}
        \begin{pmatrix}
          \dot z_{2,(i)}^k\\
          \dot \mu_{2,(i)}^k
        \end{pmatrix}
        +
        \begin{pmatrix}
          d_i & -\nu^{-1} \\
          -1 & -d_i
        \end{pmatrix}
        \begin{pmatrix}
          z_{2,(i)}^k\\
          \mu_{2,(i)}^k
        \end{pmatrix}
        &=
        \begin{pmatrix}
          0\\
          0
        \end{pmatrix} \text{ in } \Omega_2,\\
        \dot z_{2,(i)}^k(\alpha) &= \dot z_{1,(i)}^k(\alpha),\\
        \mu_{2,(i)}^k(T) + \gamma z_{2,(i)}^k(T)&=0,
      \end{aligned}
    \right.
  \end{aligned}
\end{equation}
and we update the transmission condition by 
\begin{equation}\label{eq:DN2tran}
  f_{\alpha,(i)}^{k}:= (1-\theta)f_{\alpha,(i)}^{k-1} + \theta z_{2,(i)}^k(\alpha), \quad \theta\in(0,1).
\end{equation}
At first glance, this algorithm does not have the
forward-backward structure, with both an initial and a
final condition on $z_{1,(i)}$ in $\Omega_1$ and nothing on
$\mu_{1,(i)}$. However, as mentioned in Remark~\ref{rem:transfer},
this is only a matter of interpretation: using the identity of
$z_{(i)}$ from~\eqref{eq:2id}, we can rewrite the transmission condition
 \[z_{1,(i)}^k(\alpha)=f_{\alpha,(i)}^{k-1},\quad \Longrightarrow \quad \dot\mu_{1,(i)}^k(\alpha)-d_i\mu_{1,(i)}^k(\alpha) = f_{\alpha,(i)}^{k-1},\] 
and define the update~\eqref{eq:DN2tran} as 
 \[f_{\alpha,(i)}^{k}:= (1-\theta)f_{\alpha,(i)}^{k-1} + \theta (\dot\mu_{2,(i)}^k(\alpha)-d_i\mu_{2,(i)}^k(\alpha)),\] 
to rediscover the forward-backward structure. Moreover, with the interpretation of $\mu_{1,(i)}^k$, the algorithm DN$_2$~\eqref{eq:DN2} is a RN type algorithm.  

For the analysis, we choose the state $z_{(i)}$ formulation:
  for $i=1,\ldots,n$ and iteration index $k=1,2,\ldots,$ the
  equivalent algorithm reads
\begin{equation}\label{eq:errDN2}
  \left\{
    \begin{aligned}
      \ddot z_{1,(i)}^k - \sigma_i^2 z_{1,(i)}^k &= 0 \text{ in } \Omega_1,\\
      z_{1,(i)}^k(0) &= 0, \\
      z_{1,(i)}^k(\alpha) &= f_{\alpha,(i)}^{k-1},
    \end{aligned}
  \right.
  \quad
  \left\{
    \begin{aligned}
      \ddot z_{2,(i)}^k - \sigma_i^2 z_{2,(i)}^k &= 0 \text{ in } \Omega_2,\\
      \dot z_{2,(i)}^k(\alpha) &= \dot z_{1,(i)}^k(\alpha),\\
      \dot z_{2,(i)}^k(T) + \omega_i z_{2,(i)}^k(T) &= 0,
    \end{aligned}
  \right.
\end{equation}
where we still update the transmission condition by~\eqref{eq:DN2tran}. Note that~\eqref{eq:errDN2} is still a DN type algorithm, like~\eqref{eq:DN2}. Using the solutions~\eqref{eq:zsol} to determine the two coefficients $A_i^k$ and $B_i^k$, we get from~\eqref{eq:errDN2}
 \[A_i^k= \frac{f_{\alpha,(i)}^{k-1}}{\sinh(a_i)}, \quad B_i^k=-\frac{f_{\alpha,(i)}^{k-1}\coth(a_i)}{\sigma_i\sinh(b_i) + \omega_i\cosh(b_i) }.\]
With~\eqref{eq:DN2tran}, we find
 \[f_{\alpha,(i)}^{k} = (1-\theta)f_{\alpha,(i)}^{k-1} - \theta f_{\alpha,(i)}^{k-1}\coth(a_i)\frac{\sigma_i\cosh(b_i) + \omega_i\sinh(b_i)}{\sigma_i\sinh(b_i) + \omega_i\cosh(b_i) },\]
and thus obtain the following convergence results.

\begin{theorem}
  The algorithm DN$_2$~\eqref{eq:DN2}-\eqref{eq:DN2tran} converges if and only if
  \begin{equation}\label{eq:rhoDN2}
    \rho_{\text{DN}_2}:=\max_{d_i\in\lambda(A)}\Big|1-\theta\Big(1+ \coth(a_i)\frac{\sigma_i\coth(b_i) + \omega_i}{\sigma_i + \omega_i\coth(b_i)}\Big)\Big|<1.
  \end{equation}
\end{theorem}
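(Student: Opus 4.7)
The plan is to reduce the convergence question to a mode-by-mode scalar recursion on the interface datum $f_{\alpha,(i)}^k$, exploiting the diagonalization that has already decoupled the optimality system into $n$ independent subproblems indexed by $d_i\in\lambda(A)$. Since the iteration is linear and the modes do not couple, the global interface error converges to zero from an arbitrary initial guess if and only if the scalar recursion contracts in every eigenmode.

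First, I would work with the equivalent second-order $z$-formulation~\eqref{eq:errDN2} rather than the coupled pair~\eqref{eq:DN2}. Starting from the general solution~\eqref{eq:gensol}, the homogeneous initial condition $z_{1,(i)}^k(0)=0$ forces $z_{1,(i)}^k(t)=A_i^k\sinh(\sigma_i t)$, and the terminal Robin condition $\dot z_{2,(i)}^k(T)+\omega_i z_{2,(i)}^k(T)=0$ forces $z_{2,(i)}^k(t)=B_i^k\bigl(\sigma_i\cosh(\sigma_i(T-t))+\omega_i\sinh(\sigma_i(T-t))\bigr)$, which is exactly the ansatz recorded in~\eqref{eq:zsol}. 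Only the two amplitudes $A_i^k$ and $B_i^k$ remain to be determined.

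Second, I would impose the two transmission conditions at $t=\alpha$: the Dirichlet condition $z_{1,(i)}^k(\alpha)=f_{\alpha,(i)}^{k-1}$ gives $A_i^k$ directly, and then the Neumann matching $\dot z_{2,(i)}^k(\alpha)=\dot z_{1,(i)}^k(\alpha)$ gives $B_i^k$, both as explicit scalar multiples of $f_{\alpha,(i)}^{k-1}$ expressed through $\sigma_i$, $\omega_i$, $\sinh$ and $\cosh$ of $a_i=\sigma_i\alpha$ and $b_i=\sigma_i(T-\alpha)$. Substituting the resulting $z_{2,(i)}^k(\alpha)$ into the relaxed update~\eqref{eq:DN2tran}, all factors of $f_{\alpha,(i)}^{k-1}$ factor out and one obtains a scalar one-step recursion $f_{\alpha,(i)}^{k}=\rho_i(\theta)\,f_{\alpha,(i)}^{k-1}$ with
\[
\rho_i(\theta)=1-\theta-\theta\coth(a_i)\frac{\sigma_i\cosh(b_i)+\omega_i\sinh(b_i)}{\sigma_i\sinh(b_i)+\omega_i\cosh(b_i)}.
\]
Dividing numerator and denominator of the last fraction by $\sinh(b_i)$ rewrites $\rho_i(\theta)$ as $1-\theta\bigl(1+\coth(a_i)\tfrac{\sigma_i\coth(b_i)+\omega_i}{\sigma_i+\omega_i\coth(b_i)}\bigr)$, which is precisely the expression inside the maximum in~\eqref{eq:rhoDN2}.

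Finally, because the diagonalization makes the global interface error a vector whose $i$-th component evolves under the independent scalar contraction $\rho_i(\theta)$, the iteration drives every component to zero for arbitrary initial data if and only if $|\rho_i(\theta)|<1$ for each $d_i\in\lambda(A)$; taking the maximum over the spectrum yields the sharp if-and-only-if criterion $\rho_{\text{DN}_2}<1$. The only mildly delicate step I expect is the algebraic simplification in the third step that collapses the composition of interface operators into a single rational function of $\coth(a_i)$ and $\coth(b_i)$; everything else is routine imposition of boundary data on the two-parameter ansatz.
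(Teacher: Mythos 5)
Your proposal is correct and follows essentially the same route as the paper: it works with the equivalent $z$-formulation~\eqref{eq:errDN2}, uses the ansatz~\eqref{eq:zsol}, determines $A_i^k=f_{\alpha,(i)}^{k-1}/\sinh(a_i)$ and $B_i^k=-f_{\alpha,(i)}^{k-1}\coth(a_i)/\bigl(\sigma_i\sinh(b_i)+\omega_i\cosh(b_i)\bigr)$ from the interface conditions, and substitutes into the update~\eqref{eq:DN2tran} to obtain the scalar one-step recursion whose factor is exactly the expression in~\eqref{eq:rhoDN2}. The concluding if-and-only-if argument via the decoupled eigenmodes is also the (implicit) reasoning of the paper, so nothing is missing.
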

\begin{corollary}\label{cor:cvDN2cla}
  The algorithm DN$_2$ for $\theta=1$ does not converge if $\alpha\leq
  \frac T2$.
\end{corollary}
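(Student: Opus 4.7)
The plan is to substitute $\theta=1$ into the convergence factor~\eqref{eq:rhoDN2} and show that each individual term in the maximum is already $\geq 1$, hence the maximum (the spectral radius) is too, which rules out convergence.

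Concretely, I would first simplify: setting $\theta=1$ in~\eqref{eq:rhoDN2} gives
\[
  \rho_{\text{DN}_2}|_{\theta=1}=\max_{d_i\in\lambda(A)}\Big|\coth(a_i)\,\frac{\sigma_i\coth(b_i)+\omega_i}{\sigma_i+\omega_i\coth(b_i)}\Big|.
\]
Since $\sigma_i>0$ (as $\nu^{-1}>0$) and $\omega_i=\nu^{-1}\gamma+d_i\geq 0$ for $d_i\geq 0$, and since $\coth(x)>1$ for $x>0$, the numerator and denominator of the fraction are both strictly positive, and $\coth(a_i)>1>0$. Hence the absolute value can be dropped and it suffices to establish, for a single eigenvalue $d_i$,
\[
  \coth(a_i)\bigl(\sigma_i\coth(b_i)+\omega_i\bigr) \;\geq\; \sigma_i+\omega_i\coth(b_i).
\]

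The main (and in fact only nontrivial) step is this algebraic inequality. I would rearrange it into
\[
  \sigma_i\bigl(\coth(a_i)\coth(b_i)-1\bigr) \;\geq\; \omega_i\bigl(\coth(b_i)-\coth(a_i)\bigr),
\]
and then invoke the assumption $\alpha\leq T/2$. By definition $a_i=\sigma_i\alpha$ and $b_i=\sigma_i(T-\alpha)$, so $\alpha\leq T/2$ gives $a_i\leq b_i$; since $\coth$ is strictly decreasing on $(0,\infty)$, this yields $\coth(a_i)\geq\coth(b_i)$, making the right-hand side $\leq 0$. The left-hand side is instead $\geq 0$, because both $\coth(a_i)$ and $\coth(b_i)$ exceed $1$, so $\coth(a_i)\coth(b_i)-1>0$, and $\sigma_i>0$. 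Thus the inequality holds, even strictly, so the quantity inside the maximum is $\geq 1$ for every eigenvalue $d_i\geq 0$, giving $\rho_{\text{DN}_2}|_{\theta=1}\geq 1$ and hence non-convergence.

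The main obstacle is really just recognizing that $\alpha\leq T/2$ is exactly what flips the sign of $\coth(b_i)-\coth(a_i)$; once this monotonicity observation is in place, the inequality decomposes termwise and no further computation is needed. I expect the proof to be only a few lines long.
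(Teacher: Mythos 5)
Your proposal is correct and follows essentially the same route as the paper's own proof: substitute $\theta=1$, observe positivity of numerator and denominator, and use $a_i\leq b_i\Rightarrow\coth(a_i)\geq\coth(b_i)$ to show the fraction times $\coth(a_i)$ exceeds $1$; your rearrangement $\sigma_i(\coth(a_i)\coth(b_i)-1)\geq\omega_i(\coth(b_i)-\coth(a_i))$ is just the paper's "difference of numerator and denominator" computation written term by term. No gap to report.
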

\begin{proof}
  Substituting $\theta=1$ into~\eqref{eq:rhoDN2}, we have
  \begin{equation}\label{eq:rhoDN_2}
    \rho_{\text{DN}_2}|_{\theta=1}=\max_{d_i\in\lambda(A)}\Big|\coth(a_i)\frac{\sigma_i\coth(b_i) + \omega_i}{\sigma_i + \omega_i\coth(b_i)}\Big|.
  \end{equation}
  Since $\coth(x)>1$, $\forall x>0$, both the numerator and the denominator in~\eqref{eq:rhoDN_2} are positive. When $a_i\leq b_i$ (i.e., $\alpha\leq T-\alpha$), we have $\coth(a_i)\geq\coth(b_i)$, and thus the difference between the numerator and the denominator is
   \[\begin{aligned}
     &\coth(a_i)\big(\omega_i + \sigma_i\coth(b_i)\big)-(\sigma_i + \omega_i\coth(b_i))\\
     =&\omega_i\big(\coth(a_i)-\coth(b_i)\big)+\sigma_i\big(\coth(b_i)\coth(a_i)-1\big)>0,
   \end{aligned}\]
  meaning that
   \[\coth(a_i)\frac{\sigma_i\coth(b_i) + \omega_i}{\sigma_i + \omega_i\coth(b_i)}>1.\]
  which concludes the proof.
\end{proof}

We need some extra assumptions to conclude for the case $\alpha>\frac T2$.

\begin{corollary}\label{cor:cvDN2clabis}
  The algorithm DN$_2$ for $\theta=1$ does not converge if $\gamma=0$.
\end{corollary}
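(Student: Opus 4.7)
The plan is to substitute $\theta=1$ and $\gamma=0$ directly into the convergence factor formula~\eqref{eq:rhoDN2} and show that, for every eigenvalue $d_i$, the resulting expression is strictly greater than $1$. In particular, $\gamma=0$ gives $\omega_i=\nu^{-1}\gamma+d_i=d_i$, so~\eqref{eq:rhoDN2} reduces to
\[
  \rho_{\text{DN}_2}\big|_{\theta=1,\gamma=0} = \max_{d_i\in\lambda(A)}\,\coth(a_i)\,\frac{\sigma_i\coth(b_i)+d_i}{\sigma_i + d_i\coth(b_i)},
\]
where the absolute value can be dropped because every factor in both numerator and denominator is positive. Unlike Corollary~\ref{cor:cvDN2cla}, we no longer rely on comparing $a_i$ and $b_i$ (equivalently, on the position of $\alpha$), since with $\gamma=0$ a sharper and $\alpha$-independent bound becomes available.

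The key inequality to establish is
\[
  \coth(a_i)\bigl(\sigma_i\coth(b_i)+d_i\bigr) \;>\; \sigma_i + d_i\coth(b_i),\qquad \forall\, d_i\in\lambda(A).
\]
Writing $x:=\coth(a_i)$ and $y:=\coth(b_i)$, both strictly greater than $1$, the left-hand side is $x(\sigma_i y+d_i)$. Using $x>1$ (from $a_i>0$) to bound below by dropping the factor $x$, and then using $\sigma_i>d_i$ (strict, since $\sigma_i=\sqrt{d_i^2+\nu^{-1}}$ with $\nu^{-1}>0$), I obtain
\[
  x(\sigma_i y+d_i) - (\sigma_i + d_i y) \;>\; (\sigma_i y+d_i) - (\sigma_i + d_i y) \;=\; (\sigma_i-d_i)(y-1) \;>\; 0,
\]
because $y>1$. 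A quick separate check handles the limit case $d_i=0$ (should $A$ be singular), where the ratio simplifies to $\coth(a_i)\coth(b_i)>1$.

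Combining these observations shows that for every eigenvalue $d_i\in\lambda(A)$ the term inside the maximum exceeds $1$, hence $\rho_{\text{DN}_2}\big|_{\theta=1,\gamma=0}>1$, which by~\eqref{eq:rhoDN2} means the algorithm DN$_2$ with $\theta=1$ does not converge when $\gamma=0$. The main subtlety is not computational but conceptual: one must notice that although the sign of $\coth(a_i)-\coth(b_i)$ depended on $\alpha$ in Corollary~\ref{cor:cvDN2cla}, the strict inequality $\sigma_i>d_i$ provided by $\nu^{-1}>0$ gives an $\alpha$-independent margin in the $\gamma=0$ case, which is exactly what allows us to drop the $\alpha\leq T/2$ assumption.
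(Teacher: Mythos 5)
Your proof is correct, and it takes a genuinely different route from the paper's. The paper proves this corollary in two cases: for $\alpha\leq\frac T2$ it simply invokes Corollary~\ref{cor:cvDN2cla} (which holds for all $\gamma\geq0$), and for $\alpha>\frac T2$ it sets $\gamma=0$ and factors the numerator--denominator difference as $\big(\coth(a_i)-\coth(b_i)\big)\big(d_i+\sigma_i\coth(b_i-a_i)\big)$, which requires the somewhat delicate sign observation $d_i+\sigma_i\coth(b_i-a_i)<d_i-\sigma_i<0$. You instead give a single, $\alpha$-independent argument: since $\coth(a_i)>1$ and $\sigma_i\coth(b_i)+d_i>0$, you may drop the factor $\coth(a_i)$ and reduce the claim to $(\sigma_i-d_i)\big(\coth(b_i)-1\big)>0$, which follows from $\sigma_i=\sqrt{d_i^2+\nu^{-1}}>d_i$. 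This is more elementary and unifies the two cases, and it correctly isolates the role of $\nu^{-1}>0$ as the source of the $\alpha$-independent margin; what the paper's case split buys in exchange is that its first half ($\alpha\leq\frac T2$) covers arbitrary $\gamma\geq0$, whereas your bound relies on $\omega_i=d_i<\sigma_i$ and hence is specific to $\gamma=0$ --- which is all the corollary asks for. Your separate check at $d_i=0$ is harmless but unnecessary, since $(\sigma_i-d_i)(\coth(b_i)-1)>0$ already holds there.
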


\begin{proof}
  We showed in Corollary~\ref{cor:cvDN2cla} the result for $\alpha\leq\frac T2$. Now $\alpha>\frac T2$ implies that $a_i>b_i$, thus $\coth(a_i)<\coth(b_i)$. Inserting $\gamma=0$ into~\eqref{eq:rhoDN_2} and using the definition of $\sigma_i$ from~\eqref{eq:sob}, the difference between the numerator and the denominator of~\eqref{eq:rhoDN_2} becomes
   \[\begin{aligned}
     &\coth(a_i)\big(d_i + \sigma_i\coth(b_i)\big)-(\sigma_i + d_i\coth(b_i))\\
     =&d_i\big(\coth(a_i)-\coth(b_i)\big)+\sigma_i\big(\coth(b_i)\coth(a_i)-1\big)\\
     =&\big(\coth(a_i)-\coth(b_i)\big)\big(d_i+\sigma_i\coth(b_i-a_i)\big)>0,
   \end{aligned}\]
  where we use the fact that $d_i+\sigma_i\coth(b_i-a_i)<d_i-\sigma_i<0$. This shows that DN$_2$ for $\theta=1$ also does not converge for $\alpha>\frac T2$ when $\gamma=0$. 
\end{proof}

Unlike in Corollary~\ref{cor:cvDN1estim} where we have an estimate of the convergence factor for DN$_1$, we cannot provide a general convergence estimate for the algorithm DN$_2$~\eqref{eq:DN2}-\eqref{eq:DN2tran}, since we showed in Corollary~\ref{cor:cvDN2cla} and Corollary~\ref{cor:cvDN2clabis} that it does not converge in some cases. However, we can still show the convergence behavior for extreme eigenvalues. In particular, if the eigenvalue $d_i=0$, we find
\begin{equation}\label{eq:DN2d0}
  \begin{aligned}
    \rho_{\text{DN}_2}|_{d_i=0}=
    \textstyle\left|1-\theta\frac{\sqrt{\nu^{-1}}\cosh\left(\sqrt{\nu^{-1}}T\right) + \gamma\nu^{-1}\sinh\left(\sqrt{\nu^{-1}}T\right)}{\sinh(\sqrt{\nu^{-1}}\alpha)\left(\sqrt{\nu^{-1}}\sinh\left(\sqrt{\nu^{-1}} (T-\alpha)\right) + \gamma\nu^{-1}\cosh\left(\sqrt{\nu^{-1}} (T-\alpha)\right)\right) } \right|.
  \end{aligned}
\end{equation}
When the eigenvalue goes to infinity, using Remark~\ref{rem:DN1lim}, we obtain
 \[\lim_{\d_i\rightarrow\infty}\rho_{\text{DN}_2} = |1-2\theta|.\]
By equioscillating the convergence factor for small (i.e., $\rho_{\text{DN}_2}|_{d_i=0}$) and large eigenvalues (i.e., $\rho_{\text{DN}_2}|_{d_i\rightarrow\infty}$), we obtain after some computations
\begin{equation}\label{eq:thetaDN2opt}
  \begin{aligned}
    \theta^*_{\text{DN}_2}&=
    \frac2{2+\frac{\sqrt{\nu^{-1}}\cosh\left(\sqrt{\nu^{-1}}T\right) + \gamma\nu^{-1}\sinh\left(\sqrt{\nu^{-1}}T\right)}{\sinh(\sqrt{\nu^{-1}}\alpha)\left(\sqrt{\nu^{-1}}\sinh\left(\sqrt{\nu^{-1}} (T-\alpha)\right) + \gamma\nu^{-1}\cosh\left(\sqrt{\nu^{-1}} (T-\alpha)\right)\right) }}\\
    &\frac{2}{3+\coth(\sqrt{\nu^{-1}}\alpha)\frac{\coth\big(\sqrt{\nu^{-1}}(T-\alpha)\big) + \gamma\sqrt{\nu^{-1}}}{1 + \gamma\sqrt{\nu^{-1}}\coth\big(\sqrt{\nu^{-1}}(T-\alpha)\big)}}.
  \end{aligned}
\end{equation}

\begin{theorem}\label{thm:thetaDN2opt}
  If we assume that the eigenvalues of $A$ are anywhere in the interval $[0,\infty)$, then the optimal relaxation parameter $\theta^{\star}_{\text{DN}_2}$ for the algorithm DN$_2$~\eqref{eq:DN2}-\eqref{eq:DN2tran} with $\gamma=0$ is given by~\eqref{eq:thetaDN2opt} and satisfies $\theta^{\star}_{\text{DN}_2}<\frac12$.
\end{theorem}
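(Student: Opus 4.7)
The plan is to identify the two extreme eigenvalue regimes ($d_i=0$ and $d_i\to\infty$), show that they control the worst case over $[0,\infty)$, equioscillate to pin down $\theta^{\star}_{\text{DN}_2}$, and then read off $\theta^{\star}_{\text{DN}_2}<\frac12$ from the closed form.

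Starting from~\eqref{eq:rhoDN2}, I would write $\rho_{\text{DN}_2}=\max_{d_i\in[0,\infty)}|1-\theta c(d_i)|$ with $c(d_i):=1+\coth(a_i)\frac{\sigma_i\coth(b_i)+\omega_i}{\sigma_i+\omega_i\coth(b_i)}$; the numerator-minus-denominator computation carried out in the proof of Corollary~\ref{cor:cvDN2clabis} gives precisely $c(d_i)>2$ for every $d_i\geq 0$ when $\gamma=0$. At $d_i=0$ (where $\sigma_i=\sqrt{\nu^{-1}}$, $\omega_i=0$) one obtains $c(0)=1+\coth(\sqrt{\nu^{-1}}\alpha)\coth(\sqrt{\nu^{-1}}(T-\alpha))$, which matches the factor appearing in~\eqref{eq:DN2d0}; as $d_i\to\infty$, the asymptotics $\sigma_i\sim d_i$ and $\coth(a_i),\coth(b_i)\to 1$ used in Remark~\ref{rem:DN1lim} give $c(d_i)\to 2$, hence $|1-\theta c(d_i)|\to|1-2\theta|$.

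The technical core is to show that $d_i\mapsto c(d_i)$ is monotonically decreasing on $[0,\infty)$, so that $\max_{d_i\in[0,\infty)}|1-\theta c(d_i)|=\max(|1-\theta c(0)|,|1-2\theta|)$ for every admissible $\theta$. Using $\sigma$ as the independent variable (with $d=\sqrt{\sigma^2-\nu^{-1}}$) and the identity $\partial_\sigma\coth(\sigma\xi)=-\xi/\sinh^2(\sigma\xi)$ for $\xi\in\{\alpha,T-\alpha\}$, this reduces to a sign verification for a rational combination of $\coth(a_i)$, $\coth(b_i)$ and $d_i/\sigma_i$. This monotonicity step is the main obstacle: $\sigma_i$, $\coth(a_i)$ and $\coth(b_i)$ all vary with $d_i$ and a brute-force derivative is cumbersome, so the cleanest route is probably to factor the derivative through the positive combination already isolated inside Corollary~\ref{cor:cvDN2clabis}.

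Granted the monotonicity, equioscillation finishes the proof: since $c(0)>2$, at the minimiser the branch $1-\theta c(0)$ is negative while $1-2\theta$ is positive, so solving $\theta c(0)-1=1-2\theta$ yields $\theta^{\star}_{\text{DN}_2}=\frac{2}{c(0)+2}=\frac{2}{3+\coth(\sqrt{\nu^{-1}}\alpha)\coth(\sqrt{\nu^{-1}}(T-\alpha))}$, which is the second expression in~\eqref{eq:thetaDN2opt}; the first form follows by applying the addition formula $\cosh(A+B)=\cosh A\cosh B+\sinh A\sinh B$ with $A=\sqrt{\nu^{-1}}\alpha$, $B=\sqrt{\nu^{-1}}(T-\alpha)$ and clearing fractions. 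Finally, $\coth(x)>1$ for all $x>0$ forces $c(0)>2$, hence $\theta^{\star}_{\text{DN}_2}<\frac{2}{2+2}=\frac12$.
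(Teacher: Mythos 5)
Your overall strategy is the same as the paper's: reduce the sup over $d_i\in[0,\infty)$ to the two endpoint values $d_i=0$ and $d_i\to\infty$, equioscillate $|1-\theta c(0)|$ against $|1-2\theta|$, and conclude $\theta^{\star}_{\text{DN}_2}=2/(c(0)+2)<\tfrac12$ from $c(0)=1+\coth(\sqrt{\nu^{-1}}\alpha)\coth(\sqrt{\nu^{-1}}(T-\alpha))>2$. Your endpoint computations and the final inequality are correct, and your observation that $c(d_i)>2$ for all $d_i\ge 0$ follows from Corollaries~\ref{cor:cvDN2cla} and~\ref{cor:cvDN2clabis} is a nice economy.

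The one genuine gap is exactly the step you flag as the ``main obstacle'': you never establish that $c(d_i)$ (equivalently $\rho_{\text{DN}_2}$ for fixed $\theta$) is monotonically decreasing in $d_i$, and without that (or at least without $c(d_i)\le c(0)$ for all $d_i$) the reduction $\max_{d_i}|1-\theta c(d_i)|=\max\bigl(|1-\theta c(0)|,|1-2\theta|\bigr)$ is unjustified; knowing only $c>2$ bounds $c$ from below but not from above by $c(0)$. This is precisely where the paper does its work: it differentiates the convergence factor explicitly,
\[
\frac{\d\rho_{\text{DN}_2}}{\d d_i} = -\frac{d_i\alpha}{\sigma_i\sinh^2(a_i)}\frac{\sigma_i\coth(b_i) + \omega_i}{\sigma_i + \omega_i\coth(b_i)} - \frac{\nu^{-1}\coth(a_i)}{\sigma_i}\,\frac{\beta_i(\coth^2(b_i)-1)+\frac{d_i(T-\alpha)}{\sinh^2(b_i)}(1-\gamma^2\nu^{-1}-2d_i\gamma)}{(\sigma_i+ \omega_i\coth(b_i))^2},
\]
and observes that for $\gamma=0$ (so $\beta_i=1$, $\omega_i=d_i$) every term is manifestly nonpositive, since $\coth^2(b_i)-1>0$ and the remaining factors are positive. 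So the sign verification you were hoping to avoid is in fact not cumbersome once $\gamma=0$ is inserted; your proposal should either carry out this differentiation or supply the upper bound $c(d_i)\le c(0)$ by some other argument. With that step filled in, the rest of your proof (the sign analysis at the equioscillation point and the bound $\coth(x)>1\Rightarrow c(0)+2>4$) is complete and matches the paper's conclusion.
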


\begin{proof}
  Taking the derivative of the convergence factor $\rho_{\text{DN}_2}$ from~\eqref{eq:rhoDN2} with respect to the eigenvalue $d_i$, we get 
  \[\begin{aligned}
  \frac{\d\rho_{\text{DN}_2}}{\d d_i} &= -\frac{d_i\alpha}{\sigma_i\sinh^2(a_i)}\frac{\sigma_i\coth(b_i) + \omega_i}{\sigma_i + \omega_i\coth(b_i)} \\
  &- \frac{\nu^{-1}\coth(a_i)}{\sigma_i}\frac{\beta_i(\coth^2(b_i)-1)+\frac{d_i(T-\alpha)}{\sinh^2(b_i)}(1-\gamma^2\nu^{-1}-2d_i\gamma)}{(\sigma_i+ \omega_i\coth(b_i))^2}, 
  \end{aligned}\]
  where we used $\sigma_i,\omega_i$ and $\beta_i$ from~\eqref{eq:sob}. The derivative becomes negative with $\gamma=0$, meaning that the convergence factor decreases with respect to the eigenvalue $d_i$. We can then deduce the optimal relaxation parameter using equioscillation: inserting $\gamma=0$ into~\eqref{eq:thetaDN2opt}, the denominator becomes $3+\coth(\sqrt{\nu^{-1}}\alpha)\coth(\sqrt{\nu^{-1}}(T-\alpha))<4$.
\end{proof}

For $\gamma>0$, it is not clear when the convergence factor $\rho_{\text{DN}_2}$ is monotonic with respect to the eigenvalues, and thus the optimal relaxation parameter $\theta^{\star}_{\text{DN}_2}$ could differ from~\eqref{eq:thetaDN2opt}.

\subsubsection{Neumann-Dirichlet algorithm \texorpdfstring{(ND$_2$)}{ND2}}

We now invert the two conditions: for the iteration index $k=1,2,\ldots,$ the algorithm ND$_2$ to study is
\begin{equation}\label{eq:ND2}
  \begin{aligned}
    &\left\{
      \begin{aligned}
        \begin{pmatrix}
          \dot z_{1,(i)}^k\\
          \dot \mu_{1,(i)}^k
        \end{pmatrix}
        +
        \begin{pmatrix}
          d_i & -\nu^{-1} \\
          -1 & -d_i
        \end{pmatrix}
        \begin{pmatrix}
          z_{1,(i)}^k\\
          \mu_{1,(i)}^k
        \end{pmatrix}
        &=
        \begin{pmatrix}
          0\\
          0
        \end{pmatrix} \text{ in } \Omega_1,\\
        z_{1,(i)}^k(0) &= 0,\\
        \dot z_{1,(i)}^k(\alpha)& = f_{\alpha,(i)}^{k-1},
      \end{aligned}
    \right.\\
    &\left\{
      \begin{aligned}
        \begin{pmatrix}
          \dot z_{2,(i)}^k\\
          \dot \mu_{2,(i)}^k
        \end{pmatrix}
        +
        \begin{pmatrix}
          d_i & -\nu^{-1} \\
          -1 & -d_i
        \end{pmatrix}
        \begin{pmatrix}
          z_{2,(i)}^k\\
          \mu_{2,(i)}^k
        \end{pmatrix}
        &=
        \begin{pmatrix}
          0\\
          0
        \end{pmatrix} \text{ in } \Omega_2,\\
        z_{2,(i)}^k(\alpha) &= z_{1,(i)}^k(\alpha),\\
        \mu_{2,(i)}^k(T) + \gamma z_{2,(i)}^k(T)&=0,
      \end{aligned}
    \right.
  \end{aligned}
\end{equation}
and then we update the transmission condition by 
\begin{equation}\label{eq:ND2tran}
  f_{\alpha,(i)}^{k}:= (1-\theta)f_{\alpha,(i)}^{k-1} + \theta \dot z_{2,(i)}^k(\alpha),\quad \theta\in(0,1).
\end{equation}
Similar to the algorithm DN$_2$~\eqref{eq:DN2}-\eqref{eq:DN2tran}, we cannot see the forward-backward structure in $\Omega_1$ for the algorithm ND$_2$~\eqref{eq:ND2}-\eqref{eq:ND2tran}. But by interpreting the Neumann condition on $z_{1,(i)}$ in terms of $\mu_{1,(i)}$ as explained in Remark~\ref{rem:transfer}, the forward-backward structure is again revealed through a RD type algorithm.

We proceed for the convergence analysis using the formulation~\eqref{eq:z}: for $i=1,\ldots,n$ and iteration index $k=1,2,\ldots,$ we solve
\begin{equation}\label{eq:errND2}
  \left\{
    \begin{aligned}
      \ddot z_{1,(i)}^k - (d_i^2+\nu^{-1}) z_{1,(i)}^k &= 0 \text{ in } \Omega_1,\\
      z_{1,(i)}^k(0) &= 0, \\
      \dot z_{1,(i)}^k(\alpha) &= f_{\alpha,(i)}^{k-1},
    \end{aligned}
  \right.
  \quad
  \left\{
    \begin{aligned}
      \ddot z_{2,(i)}^k - (d_i^2+\nu^{-1}) z_{2,(i)}^k &= 0 \text{ in } \Omega_2,\\
      z_{2,(i)}^k(\alpha) &= z_{1,(i)}^k(\alpha),\\
      \dot z_{2,(i)}^k(T) + d_i z_{2,(i)}^k(T) &= -\gamma\nu^{-1}z_{2,(i)}^k(T),
    \end{aligned}
  \right.
\end{equation}
where we still update the transmission condition by~\eqref{eq:ND2tran}. Note that both algorithms~\eqref{eq:ND2} and~\eqref{eq:errND2} are of ND type.

Using the solutions~\eqref{eq:zsol} and the transmission condition in~\eqref{eq:ND2tran}, we obtain
 \[A_i^k= \frac{f_{\alpha,(i)}^{k-1}}{\sigma_i\cosh(a_i)}, \quad B_i^k=\frac{f_{\alpha,(i)}^{k-1}\tanh(a_i)/\sigma_i}{\sigma_i\cosh(b_i) + \omega_i\sinh(b_i) }.\]
and we therefore get for the update condition~\eqref{eq:ND2tran}
 \[f_{\alpha,(i)}^{k} = (1-\theta)f_{\alpha,(i)}^{k-1} - \theta f_{\alpha,(i)}^{k-1}\tanh(a_i)\frac{\sigma_i\sinh(b_i) + \omega_i\cosh(b_i)}{\sigma_i\cosh(b_i) + \omega_i\sinh(b_i) }.\]
\begin{theorem}
  The algorithm ND$_2$~\eqref{eq:ND2}-\eqref{eq:ND2tran} converges if and only if
  \begin{equation}\label{eq:rhoND2}
    \rho_{\text{ND}_2}:=\max_{d_i\in\lambda(A)}\Big|1-\theta\Big(1+\tanh(a_i)\frac{\sigma_i\tanh(b_i) + \omega_i}{\sigma_i + \omega_i\tanh(b_i) }\Big)\Big|<1.
  \end{equation}
\end{theorem}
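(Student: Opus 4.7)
The plan is to read the theorem as the direct consequence of the one-step recurrence derived in the two displayed formulas immediately preceding the statement, together with the fact that the iteration decouples across the diagonalized spectrum of $A$. Because the analysis is performed on the error equations (zero right-hand side, zero initial/final conditions), the only driving datum for iterate $k$ in mode $i$ is the scalar transmission trace $f_{\alpha,(i)}^{k-1}$, so the iteration in each mode is a scalar linear recurrence $f_{\alpha,(i)}^{k}=g_i\,f_{\alpha,(i)}^{k-1}$ whose amplification factor $g_i$ we must identify and control.

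First, I would quote the recurrence already obtained,
\[f_{\alpha,(i)}^{k} = (1-\theta)f_{\alpha,(i)}^{k-1} - \theta\,\tanh(a_i)\,\frac{\sigma_i\sinh(b_i)+\omega_i\cosh(b_i)}{\sigma_i\cosh(b_i)+\omega_i\sinh(b_i)}\,f_{\alpha,(i)}^{k-1},\]
and then divide numerator and denominator of the fraction by $\cosh(b_i)$ (which is strictly positive) to bring the expression into the form displayed in~\eqref{eq:rhoND2}, namely
\[g_i = 1 - \theta\Bigl(1+\tanh(a_i)\,\frac{\sigma_i\tanh(b_i)+\omega_i}{\sigma_i+\omega_i\tanh(b_i)}\Bigr).\]
This identifies the per-mode amplification factor and shows that, in the diagonal coordinates, the iteration in mode $i$ is a contraction if and only if $|g_i|<1$.

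Next, I would invoke the decoupling argument: since $A=PDP^{-1}$ and the transmission step is applied componentwise in the diagonal basis, the iteration matrix for the full transmission vector is the diagonal matrix $\operatorname{diag}(g_1,\ldots,g_n)$. Its spectral radius is $\max_{d_i\in\lambda(A)}|g_i|$, and the iteration converges from every initial guess if and only if this spectral radius is strictly less than $1$. This yields exactly~\eqref{eq:rhoND2}. Finally, I would add a brief note that $\sigma_i+\omega_i\tanh(b_i)>0$, so that no division-by-zero issue arises in the rearrangement step.

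I do not expect a genuine obstacle here: the heavy lifting (solving~\eqref{eq:errND2} via~\eqref{eq:zsol}, computing $A_i^k,B_i^k$ from the transmission conditions, and substituting into~\eqref{eq:ND2tran}) has already been carried out in the text, so the proof is essentially bookkeeping. The only mildly delicate point is justifying that the componentwise contraction in the diagonal basis is equivalent to contraction of the full transmission vector in the original basis, but this is immediate because $P$ is a fixed invertible change of variables independent of $k$.
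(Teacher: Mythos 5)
Your proposal is correct and follows the paper's own route exactly: the theorem is read off from the one-step scalar recurrence $f_{\alpha,(i)}^{k}=g_i f_{\alpha,(i)}^{k-1}$ derived just before the statement, with the division by $\cosh(b_i)$ converting the $\sinh/\cosh$ ratio into the $\tanh$ form of~\eqref{eq:rhoND2}, and convergence equivalent to $\max_i|g_i|<1$ by the spectral decoupling. Your added remarks on the positivity of $\sigma_i+\omega_i\tanh(b_i)$ and the equivalence of componentwise and full-vector contraction under the fixed change of basis $P$ are correct bookkeeping that the paper leaves implicit.
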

\begin{corollary}\label{cor:cvND2cla}
  The algorithm ND$_2$ for $\theta=1$ converges if $\alpha\leq \frac T2$.
\end{corollary}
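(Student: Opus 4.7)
The plan is to mirror the argument used for Corollary~\ref{cor:cvDN2cla}, but working with $\tanh$ instead of $\coth$, since the convergence factor~\eqref{eq:rhoND2} is obtained from~\eqref{eq:rhoDN2} by swapping these hyperbolic functions. Setting $\theta=1$ in~\eqref{eq:rhoND2}, I would first write
\[
\rho_{\text{ND}_2}|_{\theta=1}=\max_{d_i\in\lambda(A)}\Big|\tanh(a_i)\frac{\sigma_i\tanh(b_i) + \omega_i}{\sigma_i + \omega_i\tanh(b_i)}\Big|,
\]
and observe that since $0<\tanh(x)<1$ for all $x>0$, and $\sigma_i>0$, $\omega_i>0$ by~\eqref{eq:sob} (note $\omega_i=\nu^{-1}\gamma+d_i$ and $d_i\geq 0$ is the relevant case; for $d_i>0$ with $A$ symmetric positive, this is immediate), both numerator and denominator are strictly positive. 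Hence convergence is equivalent to showing the quotient is strictly less than one for every eigenvalue.

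The key computation is the sign of the difference between the denominator and the numerator,
\[
\big(\sigma_i + \omega_i\tanh(b_i)\big)-\tanh(a_i)\big(\sigma_i\tanh(b_i)+\omega_i\big)=\sigma_i\big(1-\tanh(a_i)\tanh(b_i)\big)+\omega_i\big(\tanh(b_i)-\tanh(a_i)\big).
\]
Since $\tanh(a_i)\tanh(b_i)<1$, the first term is strictly positive. The assumption $\alpha\leq \tfrac{T}{2}$ translates via $a_i=\sigma_i\alpha$ and $b_i=\sigma_i(T-\alpha)$ into $a_i\leq b_i$, so by monotonicity of $\tanh$ we get $\tanh(b_i)-\tanh(a_i)\geq 0$, making the second term nonnegative. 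Together the whole difference is strictly positive, which yields
\[
0<\tanh(a_i)\frac{\sigma_i\tanh(b_i) + \omega_i}{\sigma_i + \omega_i\tanh(b_i)}<1
\]
for every $d_i\in\lambda(A)$, hence $\rho_{\text{ND}_2}|_{\theta=1}<1$ and the algorithm converges.

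There is no real obstacle here: the proof is essentially the symmetric counterpart to Corollary~\ref{cor:cvDN2cla}, where the roles of $\coth$ and $\tanh$ are swapped and consequently the geometric condition flips from $\alpha\leq T/2$ being the bad case (for DN$_2$) to being the good case (for ND$_2$). The only subtlety worth flagging is that, unlike in the DN$_2$ analysis, one does not need to appeal to the parameter $\gamma$ at all; the convergence holds for arbitrary $\gamma\geq 0$ as long as $a_i\leq b_i$. This also matches the asymmetry between $a_i$ and $b_i$ introduced by the fact that the Dirichlet condition is now imposed in $\Omega_2$ and the Neumann condition in $\Omega_1$, which is precisely what one would expect from the classical DN/ND heuristic for elliptic problems.
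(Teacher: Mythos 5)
Your proof is correct and follows essentially the same route as the paper: set $\theta=1$, note positivity of numerator and denominator, and show the denominator exceeds the numerator by the identity $\sigma_i(1-\tanh(a_i)\tanh(b_i))+\omega_i(\tanh(b_i)-\tanh(a_i))>0$ under $a_i\le b_i$, which is exactly the paper's computation up to a sign convention. The side observation that $\gamma$ plays no role here is accurate and consistent with the paper.
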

\begin{proof}
  Substituting $\theta=1$ into~\eqref{eq:rhoND2}, we have
  \begin{equation}\label{eq:rhoND_2}
    \rho_{\text{ND}_2}|_{\theta=1}=\max_{d_i\in\lambda(A)}\Big|\tanh(a_i)\frac{\sigma_i\tanh(b_i) + \omega_i}{\sigma_i + \omega_i\tanh(b_i)}\Big|.
  \end{equation}
  Since $0<\tanh(x)<1$, $\forall x>0$, both the numerator and the denominator in~\eqref{eq:rhoND_2} are positive. In the case where $a_i\leq b_i$ (i.e., $\alpha\leq T-\alpha$), we have $\tanh(a_i)\leq\tanh(b_i)$, and the difference between the numerator and the denominator is
   \[\begin{aligned}
     &\tanh(a_i)\big(\omega_i + \sigma_i\tanh(b_i)\big)-(\sigma_i + \omega_i\tanh(b_i))\\
     =&\omega_i\big(\tanh(a_i)-\tanh(b_i)\big)+\sigma_i\big(\tanh(b_i)\tanh(a_i)-1\big)<0,
   \end{aligned}\]
  meaning that
   \[0<\tanh(a_i)\frac{\sigma_i\tanh(b_i) + \omega_i}{\sigma_i + \omega_i\tanh(b_i)}<1.\] 
  This concludes the proof.
\end{proof}

As shown in Corollary~\ref{cor:cvDN2cla}, the algorithm DN$_2$~\eqref{eq:DN2}-\eqref{eq:DN2tran} with $\theta=1$ does not converge for $\alpha\leq \frac T2$, whereas the algorithm ND$_2$~\eqref{eq:ND2}-\eqref{eq:ND2tran} converges in this case. This reveals a symmetry behavior, since the only difference between these two algorithms is that we exchange the Dirichlet and the Neumann condition in the two subdomains. This symmetry is well-known for classical DN and ND algorithms. 

\begin{corollary}
  For $\gamma=0$, the algorithm ND$_2$ for $\theta=1$ converges for all initial guesses.
\end{corollary}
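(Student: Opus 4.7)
The plan is to combine Corollary \ref{cor:cvND2cla}, which already settles the case $\alpha\le T/2$ without any assumption on $\gamma$, with a direct algebraic argument for the remaining case $\alpha>T/2$, exploiting the simplification $\omega_i = d_i$ that arises when $\gamma=0$ (see \eqref{eq:sob}). Thus, by Corollary \ref{cor:cvND2cla} we may assume $\alpha>T/2$, equivalently $a_i>b_i$.

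Substituting $\theta=1$ and $\gamma=0$ (hence $\omega_i=d_i$) into \eqref{eq:rhoND_2}, the quantity to bound becomes
\[
\tanh(a_i)\,\frac{\sigma_i\tanh(b_i)+d_i}{\sigma_i+d_i\tanh(b_i)}.
\]
Since $\sigma_i=\sqrt{d_i^2+\nu^{-1}}>|d_i|$ and $0<\tanh(\cdot)<1$, both numerator and denominator of the fraction are positive, so the absolute value can be removed. I then compute the difference of denominator and numerator (after multiplying through by $\sigma_i+d_i\tanh(b_i)$):
\[
\sigma_i+d_i\tanh(b_i)-\tanh(a_i)\bigl(\sigma_i\tanh(b_i)+d_i\bigr)
=\sigma_i\bigl(1-\tanh(a_i)\tanh(b_i)\bigr)-d_i\bigl(\tanh(a_i)-\tanh(b_i)\bigr).
\]
Using the hyperbolic subtraction identity $\tanh(a_i)-\tanh(b_i)=\tanh(a_i-b_i)\bigl(1-\tanh(a_i)\tanh(b_i)\bigr)$, I factor this as
\[
\bigl(1-\tanh(a_i)\tanh(b_i)\bigr)\bigl(\sigma_i-d_i\tanh(a_i-b_i)\bigr).
\]

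The first factor is positive because $0<\tanh(a_i)\tanh(b_i)<1$; the second factor is positive because $|d_i\tanh(a_i-b_i)|<|d_i|<\sigma_i$, regardless of the sign of $a_i-b_i$ or of $d_i$. Therefore
\[
0<\tanh(a_i)\,\frac{\sigma_i\tanh(b_i)+d_i}{\sigma_i+d_i\tanh(b_i)}<1
\]
for every eigenvalue $d_i$, so $\rho_{\text{ND}_2}|_{\theta=1,\gamma=0}<1$, which completes the proof.

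The only real obstacle is recognizing the $\tanh(a_i-b_i)$ factorization above; without it, the two terms in the difference have opposite signs when $a_i>b_i$, and the inequality is not obvious by termwise comparison. Once the factorization is in place, the bound $\sigma_i>|d_i|$ from \eqref{eq:sob} makes the sign transparent and covers both subcases $\alpha\le T/2$ and $\alpha>T/2$ uniformly (so in fact this argument alone reproves Corollary \ref{cor:cvND2cla} in the special case $\gamma=0$).
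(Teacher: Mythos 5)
Your proof is correct and follows essentially the same route as the paper: reduce to $\alpha>T/2$ via Corollary~\ref{cor:cvND2cla}, set $\omega_i=d_i$, and factor the difference of denominator and numerator as $\bigl(1-\tanh(a_i)\tanh(b_i)\bigr)\bigl(\sigma_i-d_i\tanh(a_i-b_i)\bigr)$ using the hyperbolic subtraction identity, concluding positivity from $\sigma_i>|d_i|$ — which is exactly the paper's factorization up to an overall sign. Your added remark that the bound $|d_i\tanh(a_i-b_i)|<\sigma_i$ makes the argument uniform in the sign of $a_i-b_i$ is a small refinement, not a different method.
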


\begin{proof}
  This is shown in Corollary~\ref{cor:cvND2cla} for $\alpha\leq\frac T2$. If $\alpha>\frac T2$, i.e. $a_i>b_i$, then $\tanh(a_i)>\tanh(b_i)$, and the difference between the numerator and the denominator is
   \[\begin{aligned}
     &\tanh(a_i)\big(d_i + \sigma_i\tanh(b_i)\big)-(\sigma_i + d_i\tanh(b_i))\\
     =&d_i\big(\tanh(a_i)-\tanh(b_i)\big)+\sigma_i\big(\tanh(b_i)\tanh(a_i)-1\big)\\
     =&\big(\tanh(b_i)\tanh(a_i)-1\big)\big(\sigma_i-d_i\tanh(a_i-b_i)\big)<0,
   \end{aligned}\]
  where we use the fact that $0<\sigma_i-d_i<\sigma_i-d_i\tanh(a_i-b_i)$. This shows that the algorithm ND$_2$ for $\theta=1$ converge for $\alpha>\frac T2$ in the case $\gamma=0$.
\end{proof} 

Notice that the matrix $A$ here can be singular, in contrast to the algorithm DN$_1$ in Corollary~\ref{cor:cvDN1cla} where non-singularity is needed for $A$. As in the previous section, we can still show the convergence behavior for extreme eigenvalues. If the eigenvalue $d_i=0$, we find
\begin{equation}\label{eq:ND2d0}
  \begin{aligned}
    \rho_{\text{ND}_2}|_{d_i=0}=
    \textstyle\left|1-\theta\frac{\sqrt{\nu^{-1}}\cosh\left(\sqrt{\nu^{-1}}T\right) + \gamma\nu^{-1}\sinh\left(\sqrt{\nu^{-1}}T\right)}{\cosh(\sqrt{\nu^{-1}}\alpha)\left(\sqrt{\nu^{-1}}\cosh\left(\sqrt{\nu^{-1}} (T-\alpha)\right) + \gamma\nu^{-1}\sinh\left(\sqrt{\nu^{-1}} (T-\alpha)\right)\right) } \right|.
  \end{aligned}
\end{equation}
The expression~\eqref{eq:ND2d0} is very similar to~\eqref{eq:DN2d0}: when $\gamma=0$, the convergence factor~\eqref{eq:DN2d0} becomes
 \[\rho_{\text{DN}_2}|_{d_i=0,\gamma=0}=\left|1-\theta\Big(1+\coth\left(\sqrt{\nu^{-1}}\alpha\right)\coth\left(\sqrt{\nu^{-1}}(T-\alpha)\right)\Big) \right|,\]
whereas~\eqref{eq:ND2d0} becomes
 \[\rho_{\text{ND}_2}|_{d_i=0,\gamma=0}=\left|1-\theta\Big(1+\tanh\left(\sqrt{\nu^{-1}}\alpha\right)\tanh\left(\sqrt{\nu^{-1}}(T-\alpha)\right)\Big) \right|.\]
We find again the symmetry between DN$_2$ and ND$_2$. In the case when the eigenvalue goes to infinity, using Remark~\ref{rem:DN1lim}, we obtain
 \[\lim_{\d_i\rightarrow\infty}\rho_{\text{ND}_2} = |1-2\theta|,\]
as for DN$_2$. By equioscillating the convergence factor again for small and large eigenvalues, we obtain after some computations the relaxation parameter
\begin{equation}\label{eq:thetaND2opt}
  \begin{aligned}
    \theta^*_{\text{ND}_2}
    &=\frac2{2+\frac{\sqrt{\nu^{-1}}\cosh\left(\sqrt{\nu^{-1}}T\right) + \gamma\nu^{-1}\sinh\left(\sqrt{\nu^{-1}}T\right)}{\cosh(\sqrt{\nu^{-1}}\alpha)\left(\sqrt{\nu^{-1}}\cosh\left(\sqrt{\nu^{-1}} (T-\alpha)\right) + \gamma\nu^{-1}\sinh\left(\sqrt{\nu^{-1}} (T-\alpha)\right)\right) }}\\
    &=\frac{2}{3+\tanh(\sqrt{\nu^{-1}}\alpha)\frac{\tanh\left(\sqrt{\nu^{-1}}(T-\alpha)\right) + \gamma\sqrt{\nu^{-1}}}{1 + \gamma\sqrt{\nu^{-1}}\tanh\left(\sqrt{\nu^{-1}}(T-\alpha)\right)}}.
  \end{aligned}
\end{equation}
We thus obtain a similar result as Theorem~\ref{thm:thetaDN2opt}.

\begin{theorem}\label{thm:thetaND2opt}
  If we assume that the eigenvalues of $A$ are anywhere in the interval $[0,\infty)$, then the optimal relaxation parameter $\theta^{\star}_{\text{ND}_2}$ for the algorithm ND$_2$~\eqref{eq:ND2}-\eqref{eq:ND2tran} with $\gamma=0$ is given by~\eqref{eq:thetaND2opt}, and satisfies $\frac12<\theta^{\star}_{\text{ND}_2}<\frac23$.
\end{theorem}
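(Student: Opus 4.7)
The plan is to mirror the structure of the proof of Theorem~\ref{thm:thetaDN2opt}. First I would differentiate the convergence factor $\rho_{\text{ND}_2}$ from~\eqref{eq:rhoND2} with respect to the eigenvalue $d_i$. The resulting expression is the $\tanh$-analogue of the derivative computed for DN$_2$: one obtains it by replacing $\coth(\cdot)$ with $\tanh(\cdot)$ and using $\mathrm{d}\tanh(x)/\mathrm{d}x = 1-\tanh^2(x)$ in place of $-(1-\coth^2 x)$. Setting $\gamma=0$ should yield a derivative of definite sign, so that $\rho_{\text{ND}_2}$ is monotonic in $d_i$ on $[0,\infty)$ and its maximum over the spectrum is attained at one of the endpoints $d_i=0$ or $d_i\to\infty$.

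Given this monotonicity, the optimal relaxation parameter is obtained by equioscillating the two extreme values: $\rho_{\text{ND}_2}|_{d_i=0}$ given in~\eqref{eq:ND2d0} and $\lim_{d_i\to\infty}\rho_{\text{ND}_2}=|1-2\theta|$. Setting them equal in absolute value and solving for $\theta$ produces the first line of~\eqref{eq:thetaND2opt}. The second line then follows from the addition formulas
\[\cosh(\sqrt{\nu^{-1}}T)=\cosh(\sqrt{\nu^{-1}}\alpha)\cosh(\sqrt{\nu^{-1}}(T-\alpha))+\sinh(\sqrt{\nu^{-1}}\alpha)\sinh(\sqrt{\nu^{-1}}(T-\alpha)),\]
and similarly for $\sinh(\sqrt{\nu^{-1}}T)$, after dividing numerator and denominator by $\cosh(\sqrt{\nu^{-1}}\alpha)\cosh(\sqrt{\nu^{-1}}(T-\alpha))$.

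For the bound, substituting $\gamma=0$ into the second line of~\eqref{eq:thetaND2opt} makes the denominator equal to $3+\tanh(\sqrt{\nu^{-1}}\alpha)\tanh(\sqrt{\nu^{-1}}(T-\alpha))$. Since $0<\tanh(x)<1$ for $x>0$, this denominator lies strictly in $(3,4)$, immediately giving $\frac12<\theta^{\star}_{\text{ND}_2}<\frac23$.

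The main obstacle is the monotonicity step. With $\tanh$ replacing $\coth$, the algebraic sign structure after differentiation is not simply inherited from the DN$_2$ computation, and one must collect and compare terms carefully at $\gamma=0$ to confirm a definite sign. Note also that, as a consistency check for equioscillation, the bound $\theta^{\star}_{\text{ND}_2}\in(\tfrac12,\tfrac23)$ implies $1-2\theta^{\star}<0$, so the absolute values can be removed in a consistent manner when equating the two extremes. Once monotonicity is settled, the remaining algebraic simplification is routine.
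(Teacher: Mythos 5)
Your proposal follows essentially the same route as the paper's proof: differentiate $\rho_{\text{ND}_2}$ with respect to $d_i$, observe that at $\gamma=0$ the derivative is positive (indeed, with $\omega_i=d_i$ and $\beta_i=1$ every term in the derivative is manifestly nonnegative, so the monotonicity step you flag as the main obstacle does go through), conclude that the maximum is attained at the spectral endpoints, equioscillate $\rho|_{d_i=0}$ against $|1-2\theta|$, and read off the bound from the denominator $3+\tanh(\sqrt{\nu^{-1}}\alpha)\tanh(\sqrt{\nu^{-1}}(T-\alpha))\in(3,4)$. Your additional remarks (the hyperbolic addition formulas linking the two lines of~\eqref{eq:thetaND2opt} and the sign check $1-2\theta^{\star}<0$ when removing absolute values) are correct and only make explicit what the paper leaves implicit.
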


\begin{proof}
  As for Theorem~\ref{thm:thetaDN2opt}, we take the derivative of $\rho_{\text{ND}_2}$ with respect to $d_i$,
    \[\begin{aligned}
    \frac{\d\rho_{\text{ND}_2}}{\d d_i} =&
 	\frac{d_i\alpha}{\sigma_i\cosh^2(a_i)}\frac{\sigma_i\tanh(b_i) +
    \omega_i}{\sigma_i + \omega_i\tanh(b_i)} \\
    &+\frac{\nu^{-1}\tanh(a_i)}{\sigma_i}\frac{\beta_i(1-\tanh^2(b_i))-\frac{d_i(T-\alpha)}{\cosh^2(b_i)}(\gamma^2\nu^{-1}+2d_i\gamma
    -1)}{(\sigma_i + \omega_i\tanh(b_i))^2},
    \end{aligned}\]
    with $\sigma_i,\omega_i$
  and $\beta_i$ defined in~\eqref{eq:sob}. For $\gamma=0$, the
    derivative is positive and thus $\rho_{\text{ND}_2}$ increases
    with $d_i$. Therefore $\theta^*_{\text{ND}_2}$
   is determined by equioscillation. Inserting $\gamma=0$
  into~\eqref{eq:thetaND2opt}, the denominator becomes
  $3+\tanh(\sqrt{\nu^{-1}}\alpha)\tanh(\sqrt{\nu^{-1}}(T-\alpha))<4$.
\end{proof}

As for DN$_2$ however, the monotonicity of the
convergence factor $\rho_{\text{ND}_2}$ is not guaranteed
  for $\gamma>0$, and the optimal relaxation parameter
$\theta^{\star}_{\text{ND}_2}$ may differ from~\eqref{eq:thetaND2opt}.

\subsection{Category III}

We finally study algorithms in Category III which run only on the
state $\mu_{(i)}$ to solve the problem~\eqref{eq:sysODEreduced},
and use DN and ND techniques.

\subsubsection{Dirichlet-Neumann algorithm \texorpdfstring{(DN$_3$)}{DN3}}
As shown in Table~\ref{tab:combination}, we apply the Dirichlet
condition in $\Omega_1$ and the Neumann condition in $\Omega_2$, both
to the state $\mu_{(i)}$. For iteration index $k=1,2,\ldots,$ the algorithm DN$_3$ solves
\begin{equation}\label{eq:DN3}
  \begin{aligned}
    &\left\{
      \begin{aligned}
        \begin{pmatrix}
          \dot z_{1,(i)}^k\\
          \dot \mu_{1,(i)}^k
        \end{pmatrix}
        +
        \begin{pmatrix}
          d_i & -\nu^{-1} \\
          -1 & -d_i
        \end{pmatrix}
        \begin{pmatrix}
          z_{1,(i)}^k\\
          \mu_{1,(i)}^k
        \end{pmatrix}
        &=
        \begin{pmatrix}
          0\\
          0
        \end{pmatrix} \text{ in } \Omega_1,\\
        z_{1,(i)}^k(0) &= 0,\\
        \mu_{1,(i)}^k(\alpha) & = f_{\alpha,(i)}^{k-1},
      \end{aligned}
    \right.\\
    &\left\{
      \begin{aligned}
        \begin{pmatrix}
          \dot z_{2,(i)}^k\\
          \dot \mu_{2,(i)}^k
        \end{pmatrix}
        +
        \begin{pmatrix}
          d_i & -\nu^{-1} \\
          -1 & -d_i
        \end{pmatrix}
        \begin{pmatrix}
          z_{2,(i)}^k\\
          \mu_{2,(i)}^k
        \end{pmatrix}
        &=
        \begin{pmatrix}
          0\\
          0
        \end{pmatrix} \text{ in } \Omega_2,\\
        \dot \mu_{2,(i)}^k(\alpha)&=\dot \mu_{1,(i)}^k(\alpha),\\
        \mu_{2,(i)}^k(T) + \gamma z_{2,(i)}^k(T)&=0,
      \end{aligned}
    \right.
  \end{aligned}
\end{equation}
and we update the transmission condition by 
\begin{equation}\label{eq:DN3tran}
  f_{\alpha,(i)}^{k}:= (1-\theta)f_{\alpha,(i)}^{k-1} + \theta \mu_{2,(i)}^k(\alpha), \theta\in(0,1).
\end{equation}
The forward-backward structure is now less present in $\Omega_2$,
  where we would expect to have an initial condition for $z_{2,(i)}$ instead
  of $\mu_{2,(i)}$. By using the identity of $\mu_{(i)}$
  in~\eqref{eq:2id}, we can interpret the Neumann condition
 \[\dot \mu_{2,(i)}^k(\alpha)=\dot \mu_{1,(i)}^k(\alpha),\quad \Longrightarrow \quad d_i\dot z_{2,(i)}^k(\alpha) + \sigma_i^2 z_{2,(i)}^k(\alpha) = d_i\dot z_{1,(i)}^k(\alpha) + \sigma_i^2 z_{1,(i)}^k(\alpha),\]
a Robin type condition on $z_{2,(i)}$. Therefore, the algorithm DN$_3$ can also
be interpreted as a DR algorithm.

For the convergence analysis, it is natural to choose the
interpretation in $\mu_{(i)}$, i.e., using~\eqref{eq:mu}, which gives
\begin{equation}\label{eq:errDN3}
  \left\{
    \begin{aligned}
      \ddot \mu_{1,(i)}^k - \sigma_i^2 \mu_{1,(i)}^k &= 0 \text{ in } \Omega_1,\\
      \dot \mu_{(i)}(0)-d_i\mu_{(i)}(0) &= 0, \\
      \mu_{1,(i)}^k(\alpha) &= f_{\alpha,(i)}^{k-1},
    \end{aligned}
  \right.
  \quad
  \left\{
    \begin{aligned}
      \ddot \mu_{2,(i)}^k - \sigma_i^2 \mu_{2,(i)}^k &= 0 \text{ in } \Omega_2,\\
      \dot \mu_{2,(i)}^k(\alpha) &= \dot \mu_{1,(i)}^k(\alpha),\\
      \gamma \dot \mu_{(i)}(T)+\beta_i\mu_{(i)}(T)&=0,
    \end{aligned}
  \right.
\end{equation}
where we still update the transmission condition
through~\eqref{eq:DN3tran}. We observe that both~\eqref{eq:DN3}
and~\eqref{eq:errDN3} are DN type algorithms. Proceeding as before, 
we obtain:

\begin{theorem}
  The algorithm DN$_3$~\eqref{eq:DN3}-\eqref{eq:DN3tran} converges if and only if
  \begin{equation}\label{eq:rhoDN3}
    \rho_{\text{DN}_3}:=\max_{d_i\in\lambda(A)}\Big|1-\theta\Big(1+\frac{\sigma_i + d_i\coth(a_i)}{\sigma_i\coth(a_i) + d_i}\frac{\gamma\sigma_i\coth(b_i) + \beta_i}{\gamma\sigma_i + \beta_i\coth(b_i)}\Big)\Big|<1.
  \end{equation}
\end{theorem}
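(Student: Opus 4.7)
The plan is to follow the exact same recipe that worked for DN$_1$, ND$_1$, DN$_2$, ND$_2$, adapting it to the $\mu_{(i)}$ reformulation \eqref{eq:errDN3} of DN$_3$. Since \eqref{eq:mu} is equivalent to \eqref{eq:sysODEreduced}, and the equivalence of \eqref{eq:DN3}--\eqref{eq:DN3tran} with \eqref{eq:errDN3} was already noted in the paragraph preceding the theorem, it suffices to analyze \eqref{eq:errDN3}.

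First I would write the general solution \eqref{eq:gensol} on each subdomain and fit the two endpoint boundary conditions. The initial-like condition $\dot\mu_{1,(i)}(0)-d_i\mu_{1,(i)}(0)=0$ yields exactly the same closed form for $\mu_{1,(i)}^k$ as in the ND$_1$ analysis, namely
\[
\mu_{1,(i)}^k(t)=A_i^k\bigl(\sigma_i\cosh(\sigma_i t)+d_i\sinh(\sigma_i t)\bigr),
\]
and the final condition $\gamma\dot\mu_{(i)}(T)+\beta_i\mu_{(i)}(T)=0$ yields
\[
\mu_{2,(i)}^k(t)=B_i^k\bigl(\gamma\sigma_i\cosh(\sigma_i(T-t))+\beta_i\sinh(\sigma_i(T-t))\bigr),
\]
exactly as in \eqref{eq:musol}. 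So both subdomain solutions are already available from the ND$_1$ analysis; only the transmission conditions differ.

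Next I would impose the two transmission conditions of \eqref{eq:errDN3}. The Dirichlet condition $\mu_{1,(i)}^k(\alpha)=f_{\alpha,(i)}^{k-1}$ immediately gives
\[
A_i^k=\frac{f_{\alpha,(i)}^{k-1}}{\sigma_i\cosh(a_i)+d_i\sinh(a_i)}.
\]
Differentiating the two closed forms and setting $\dot\mu_{2,(i)}^k(\alpha)=\dot\mu_{1,(i)}^k(\alpha)$ then determines $B_i^k$ explicitly in terms of $f_{\alpha,(i)}^{k-1}$ (the $\sigma_i$ factors on the two sides cancel, leaving a linear equation). Substituting into $\mu_{2,(i)}^k(\alpha)$ and using the update \eqref{eq:DN3tran} yields a scalar recursion $f_{\alpha,(i)}^{k}=g_i(\theta)f_{\alpha,(i)}^{k-1}$.

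The final step is algebraic: divide numerators and denominators by $\sinh(a_i)$ and $\sinh(b_i)$ respectively to convert the $\sinh/\cosh$ products into $\coth$ form, so that the factor becomes
\[
1-\theta\Bigl(1+\frac{\sigma_i+d_i\coth(a_i)}{\sigma_i\coth(a_i)+d_i}\,\frac{\gamma\sigma_i\coth(b_i)+\beta_i}{\gamma\sigma_i+\beta_i\coth(b_i)}\Bigr),
\]
exactly matching \eqref{eq:rhoDN3}. Since the recursion is scalar on each eigenmode, $\|f^k\|\to 0$ for all initial errors if and only if the maximum over $d_i\in\lambda(A)$ of the absolute value of this factor is strictly less than one, yielding the stated necessary and sufficient condition.

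The only non-routine part is the $\coth$-repackaging at the end; all the steps before it are copies of the ND$_1$ derivation, with the Dirichlet/Neumann transmission pair swapped. I would keep the notations $a_i=\sigma_i\alpha$ and $b_i=\sigma_i(T-\alpha)$ introduced earlier so the result reads compactly and matches the form used in \eqref{eq:rhoDN1}, \eqref{eq:rhoND1}, \eqref{eq:rhoDN2}, \eqref{eq:rhoND2}. No new estimate or monotonicity is required for the theorem itself; those are the subject of the subsequent corollaries.
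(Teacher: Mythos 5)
Your proposal is correct and follows exactly the route the paper intends (the paper compresses the proof to ``Proceeding as before''): solve \eqref{eq:errDN3} with the subdomain solutions \eqref{eq:musol}, determine $A_i^k$ from the Dirichlet condition and $B_i^k$ from the Neumann matching at $\alpha$, insert into the update \eqref{eq:DN3tran}, and rewrite in $\coth$ form; carrying out your steps indeed reproduces \eqref{eq:rhoDN3}.
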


To get more insight, we choose $\theta=1$ in~\eqref{eq:rhoDN3}, and find
\begin{equation}\label{eq:rhoDN_3}
  \rho_{\text{DN}_3}|_{\theta=1}=\max_{d_i\in\lambda(A)}\Big|\frac{\sigma_i + d_i\coth(a_i)}{\sigma_i\coth(a_i) + d_i}\frac{\gamma\sigma_i\coth(b_i) + \beta_i}{\gamma\sigma_i + \beta_i\coth(b_i)}\Big|.
\end{equation}
It is less clear whether $\gamma\sigma_i + \beta_i\coth(b_i)$ is
positive, since, using the definition of $\beta_i$ and $\sigma_i$
from~\eqref{eq:sob}, we have
 \[\begin{aligned}
   \gamma\sigma_i + \beta_i\coth(b_i) = \gamma\Big(\sqrt{d_i^2+\nu^{-1}}-d_i\coth\big(\sqrt{d_i^2+\nu^{-1}}&(T-\alpha)\big)\Big) \\
   + &\coth\big(\sqrt{d_i^2+\nu^{-1}}(T-\alpha)\big),
 \end{aligned}\]
and depending on the values of $\nu,\gamma$ and $\alpha$, this could
be negative. However, we can simplify~\eqref{eq:rhoDN_3} by setting
$\gamma=0$, and obtain:
\begin{corollary}\label{cor:cvDN3cla}
  If $\gamma=0$, then the algorithm DN$_3$ with $\theta=1$ converges for all initial guesses.
\end{corollary}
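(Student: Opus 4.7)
The plan is to simplify the convergence factor \eqref{eq:rhoDN_3} under $\gamma=0$ and show the resulting expression is strictly less than one for every eigenvalue $d_i$. Since $\beta_i = 1-\gamma d_i = 1$ when $\gamma=0$, the second fraction in \eqref{eq:rhoDN_3} collapses to
\[
\frac{\gamma\sigma_i\coth(b_i) + \beta_i}{\gamma\sigma_i + \beta_i\coth(b_i)} \;=\; \frac{1}{\coth(b_i)} \;=\; \tanh(b_i),
\]
so the convergence factor becomes
\[
\rho_{\text{DN}_3}|_{\theta=1,\gamma=0} \;=\; \max_{d_i\in\lambda(A)}\Bigl|\frac{\sigma_i + d_i\coth(a_i)}{\sigma_i\coth(a_i) + d_i}\,\tanh(b_i)\Bigr|.
\]

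Next I would exploit the two strict inequalities $\sigma_i > d_i \geq 0$, which follows from $\sigma_i = \sqrt{d_i^2+\nu^{-1}}$ and $\nu^{-1}>0$, and $\coth(a_i) > 1$, which follows from $a_i = \sigma_i\alpha > 0$. Both the numerator and the denominator of the first fraction are then positive, and a one-line computation yields
\[
(\sigma_i\coth(a_i) + d_i) - (\sigma_i + d_i\coth(a_i)) \;=\; (\sigma_i - d_i)(\coth(a_i) - 1) \;>\; 0,
\]
so the first factor lies strictly in $(0,1)$. Since $\tanh(b_i)\in(0,1)$ as well, the product is strictly in $(0,1)$ for every $d_i\in\lambda(A)$, and because $\lambda(A)$ is finite, the maximum is attained and strictly less than one, giving convergence.

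There is no real obstacle: the only subtlety worth noting, by contrast with Corollary~\ref{cor:cvDN1cla}, is that no non-singularity hypothesis on $A$ is required here. The case $d_i=0$ is handled uniformly since $\sigma_i$ stays strictly positive thanks to $\nu^{-1}>0$, in which case the first factor degenerates to $\sigma_i/(\sigma_i\coth(a_i)) = \tanh(a_i) \in (0,1)$. Both the simplification step and the sign analysis are routine once the $\gamma=0$ substitution is performed.
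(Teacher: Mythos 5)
Your proposal is correct and follows essentially the same route as the paper: substitute $\gamma=0$ so the second fraction collapses to $\tanh(b_i)$, then show the first fraction lies in $(0,1)$ via the sign of the difference between numerator and denominator, which factors as $(\sigma_i-d_i)(\coth(a_i)-1)$ (the paper writes the equivalent identity $(d_i-\sigma_i)(1-\tanh(a_i))<0$ after clearing the $\coth$). Your added remarks on the $d_i=0$ case and the absence of a non-singularity hypothesis are accurate but not needed for the argument.
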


\begin{proof}
  Substituting $\theta=1$ into~\eqref{eq:rhoDN_3}, we have
  \begin{equation}\label{eq:rhoDN_3gam0}
    \rho_{\text{DN}_3}|_{\theta=1}=\max_{d_i\in\lambda(A)}\Big|\frac{\sigma_i\tanh(a_i) + d_i}{\sigma_i + d_i\tanh(a_i)}\tanh(b_i)\Big|.
  \end{equation}
  Both the numerator and the denominator are positive. Using $0<\tanh(x)<1$, $\forall x>0$, we get
   \[\big(d_i + \sigma_i\tanh(a_i)\big)-(\sigma_i + d_i\tanh(a_i))=\big(d_i-\sigma_i\big)\big(1-\tanh(a_i)\big)<0.\]
   \[0<\tanh(b_i)\frac{\sigma_i\tanh(a_i) + d_i}{\sigma_i + d_i\tanh(a_i)}<1.\] 
  which concludes the proof.
\end{proof}

For $\gamma=0$, the algorithm
  DN$_3$~\eqref{eq:DN3}-\eqref{eq:DN3tran} converges for $\theta=1$ as
  well as the algorithm ND$_2$~\eqref{eq:ND2}-\eqref{eq:ND2tran},
  since their convergence factors are very similar. For
    extreme eigenvalues, inserting $d_i=0$ into~\eqref{eq:rhoDN3}, we
    find the identical formula as~\eqref{eq:ND2d0}, and when the
  eigenvalue goes to infinity, we also obtain
 \[\lim_{\d_i\rightarrow\infty}\rho_{\text{DN}_3} = |1-2\theta|,\]
By equioscillating the convergence
factor between small and large eigenvalues, we obtain thus the same
relaxation parameter as~\eqref{eq:thetaND2opt}, which leads to:

\begin{theorem}\label{thm:thetaDN3opt}
  If we assume the eigenvalues of $A$ can be anywhere in the interval
  $[0,\infty)$, then the optimal relaxation parameter
    $\theta^{\star}_{\text{DN}_3}$ for the algorithm
    DN$_3$~\eqref{eq:DN3}-\eqref{eq:DN3tran} with $\gamma=0$ is
    identical to $\theta^{\star}_{\text{ND}_2}$.
\end{theorem}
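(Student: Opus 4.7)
The plan is to mirror the argument used in Theorem~\ref{thm:thetaND2opt} for ND$_2$, exploiting the observation already made in the text that the two convergence factors agree at the two extreme eigenvalues once $\gamma=0$.

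First, I would write down the convergence factor $\rho_{\text{DN}_3}$ specialized to $\gamma=0$, namely~\eqref{eq:rhoDN_3gam0}, and evaluate it at $d_i=0$. As noted just before the theorem statement, this value coincides with $\rho_{\text{ND}_2}|_{d_i=0,\gamma=0}=\big|1-\theta\big(1+\tanh(\sqrt{\nu^{-1}}\alpha)\tanh(\sqrt{\nu^{-1}}(T-\alpha))\big)\big|$. Next, I would take the limit $d_i\to\infty$ in~\eqref{eq:rhoDN_3gam0} using $\sigma_i\sim d_i$, $\tanh(a_i),\tanh(b_i)\to 1$, which yields $|1-2\theta|$, matching the corresponding ND$_2$ limit.

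Second, to justify that the equioscillation between these two endpoints actually gives the optimal $\theta$, I would establish monotonicity of $\rho_{\text{DN}_3}$ as a function of $d_i\in[0,\infty)$ when $\gamma=0$, exactly as was done in the proof of Theorem~\ref{thm:thetaND2opt}. Differentiating the inner factor of~\eqref{eq:rhoDN_3gam0} with respect to $d_i$ and using $\sigma_i'(d_i)=d_i/\sigma_i$ together with $a_i'=d_i\alpha/\sigma_i$ and $b_i'=d_i(T-\alpha)/\sigma_i$, the derivative splits into a sum of terms whose signs can be checked individually using only $0<\tanh(x)<1$ and $\sigma_i>d_i\ge 0$. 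The expected conclusion is that the inner factor is monotonic in $d_i$ on $[0,\infty)$, so that its maximum over $[0,\infty)$ is attained at one of the two endpoints for any given $\theta$.

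Third, once monotonicity is in hand, the optimal $\theta$ is characterized by equioscillation between the endpoint values, i.e.\ by requiring $\rho_{\text{DN}_3}|_{d_i=0}=|1-2\theta|$. Since the two endpoint expressions for DN$_3$ and ND$_2$ at $\gamma=0$ are literally the same, the resulting equation is identical to the one solved in Theorem~\ref{thm:thetaND2opt}, so its unique solution in $(0,1)$ is $\theta^{\star}_{\text{ND}_2}$ from~\eqref{eq:thetaND2opt}, which also satisfies $\tfrac12<\theta^{\star}_{\text{ND}_2}<\tfrac23$. I expect the main obstacle to be the monotonicity step: the derivative of~\eqref{eq:rhoDN_3gam0} in $d_i$ is a somewhat tedious expression, and one has to group the terms carefully (using $\sigma_i^2-d_i^2=\nu^{-1}$) to exhibit a definite sign. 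The rest of the argument is then a direct comparison of the equioscillation equations for DN$_3$ and ND$_2$.
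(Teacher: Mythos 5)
Your proposal is correct and follows essentially the same equioscillation argument as the paper: match the endpoint values at $d_i=0$ and $d_i\to\infty$ with those of ND$_2$, establish monotonicity in $d_i$ for $\gamma=0$, and conclude that the equioscillation equations coincide. The only difference is that the paper obtains the monotonicity for free by observing that~\eqref{eq:rhoDN_3gam0} is exactly~\eqref{eq:rhoND_2} with $a_i$ and $b_i$ exchanged (i.e., $\alpha\mapsto T-\alpha$), so the derivative computation from Theorem~\ref{thm:thetaND2opt} carries over directly, whereas you propose to redo that differentiation from scratch -- a valid but more laborious route to the same sign conclusion.
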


\begin{proof}
  For $\gamma=0$, the convergence factors~\eqref{eq:rhoND_2}
  and~\eqref{eq:rhoDN_3gam0} become the same when exchanging
  $a_i$ and $b_i$, and the result thus follows as for
    Theorem~\ref{thm:thetaND2opt}.
\end{proof}

\subsubsection{Neumann-Dirichlet algorithm \texorpdfstring{(ND$_3$)}{ND3}}

We now exchange the Dirichlet and Neumann conditions on the two
subdomains, and obtain
\begin{equation}\label{eq:ND3}
  \begin{aligned}
    &\left\{
      \begin{aligned}
        \begin{pmatrix}
          \dot z_{1,(i)}^k\\
          \dot \mu_{1,(i)}^k
        \end{pmatrix}
        +
        \begin{pmatrix}
          d_i & -\nu^{-1} \\
          -1 & -d_i
        \end{pmatrix}
        \begin{pmatrix}
          z_{1,(i)}^k\\
          \mu_{1,(i)}^k
        \end{pmatrix}
        &=
        \begin{pmatrix}
          0\\
          0
        \end{pmatrix} \text{ in } \Omega_1,\\
        z_{1,(i)}^k(0) &= 0,\\
        \dot\mu_{1,(i)}^k(\alpha) & = f_{\alpha,(i)}^{k-1},
      \end{aligned}
    \right.\\
    &\left\{
      \begin{aligned}
        \begin{pmatrix}
          \dot z_{2,(i)}^k\\
          \dot \mu_{2,(i)}^k
        \end{pmatrix}
        +
        \begin{pmatrix}
          d_i & -\nu^{-1} \\
          -1 & -d_i
        \end{pmatrix}
        \begin{pmatrix}
          z_{2,(i)}^k\\
          \mu_{2,(i)}^k
        \end{pmatrix}
        &=
        \begin{pmatrix}
          0\\
          0
        \end{pmatrix} \text{ in } \Omega_2,\\
        \mu_{2,(i)}^k(\alpha)&=\mu_{1,(i)}^k(\alpha),\\
        \mu_{2,(i)}^k(T) + \gamma z_{2,(i)}^k(T)&=0,
      \end{aligned}
    \right.
  \end{aligned}
\end{equation}
where the transmission condition is updated by
\begin{equation}\label{eq:ND3tran}
  f_{\alpha,(i)}^{k}:= (1-\theta)f_{\alpha,(i)}^{k-1} + \theta \dot \mu_{2,(i)}^k(\alpha),\theta\in(0,1).
\end{equation}
As for DN$_3$, we need to use the identity~\eqref{eq:2id} and
interpret $\mu_{2,(i)}^k(\alpha)=\mu_{1,(i)}^k(\alpha)$ as
 \[\dot z_{2,(i)}^k(\alpha) + d_i z_{2,(i)}^k(\alpha) = \dot z_{1,(i)}^k(\alpha) + d_i z_{1,(i)}^k(\alpha),\] 
to reveal the forward-backward structure with a NR type
algorithm. Using formulation~\eqref{eq:mu}, we get
\begin{equation}\label{eq:errND3}
  \left\{
    \begin{aligned}
      \ddot \mu_{1,(i)}^k - \sigma_i^2 \mu_{1,(i)}^k &= 0 \text{ in } \Omega_1,\\
      \dot \mu_{(i)}(0)-d_i\mu_{(i)}(0) &= 0, \\
      \dot \mu_{1,(i)}^k(\alpha) &= f_{\alpha,(i)}^{k-1},
    \end{aligned}
  \right.
  \quad
  \left\{
    \begin{aligned}
      \ddot \mu_{2,(i)}^k - \sigma_i^2 \mu_{2,(i)}^k &= 0 \text{ in } \Omega_2,\\
      \mu_{2,(i)}^k(\alpha) &= \mu_{1,(i)}^k(\alpha),\\
      \gamma \dot \mu_{(i)}(T)+\beta_i\mu_{(i)}(T)&=0.
    \end{aligned}
  \right.
\end{equation}

\begin{theorem}
  The algorithm ND$_3$~\eqref{eq:ND3}-\eqref{eq:ND3tran} converges if
  and only if
  \begin{equation}\label{eq:rhoND3}
    \rho_{\text{ND}_3}:=\max_{d_i\in\lambda(A)}\Big|1-\theta\Big(1+\frac{\sigma_i + d_i\tanh(a_i)}{\sigma_i\tanh(a_i) + d_i}\frac{\gamma\sigma_i\tanh(b_i) + \beta_i}{\gamma\sigma_i + \beta_i\tanh(b_i)}\Big)\Big|<1.
  \end{equation}
\end{theorem}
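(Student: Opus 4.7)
The plan is to mirror the derivation already carried out for DN$_1$ and ND$_1$, only adapted to the new pair of transmission conditions in~\eqref{eq:errND3}. Since both subdomain problems are again the second-order ODE $\ddot \mu_{j,(i)}^k - \sigma_i^2 \mu_{j,(i)}^k = 0$, the general solution~\eqref{eq:gensol} still applies. My first step is to impose the temporal end conditions $\dot\mu_{1,(i)}^k(0) - d_i\mu_{1,(i)}^k(0) = 0$ on $\Omega_1$ and $\gamma\dot\mu_{2,(i)}^k(T) + \beta_i\mu_{2,(i)}^k(T) = 0$ on $\Omega_2$. Exactly as in~\eqref{eq:musol}, this produces the one-parameter families
\[
\mu_{1,(i)}^k(t) = A_i^k\big(\sigma_i\cosh(\sigma_i t) + d_i\sinh(\sigma_i t)\big), \qquad \mu_{2,(i)}^k(t) = B_i^k\big(\gamma\sigma_i\cosh(\sigma_i(T-t)) + \beta_i\sinh(\sigma_i(T-t))\big),
\]
so the entire iteration is reduced to tracking the two scalars $A_i^k$ and $B_i^k$.

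Next, I apply the transmission conditions at $\alpha$. The Neumann condition $\dot\mu_{1,(i)}^k(\alpha) = f_{\alpha,(i)}^{k-1}$ directly determines
\[
A_i^k = \frac{f_{\alpha,(i)}^{k-1}}{\sigma_i\big(\sigma_i\sinh(a_i) + d_i\cosh(a_i)\big)},
\]
and the Dirichlet matching $\mu_{2,(i)}^k(\alpha) = \mu_{1,(i)}^k(\alpha)$ then yields $B_i^k$ in terms of $A_i^k$ through the ratio $(\sigma_i\cosh(a_i) + d_i\sinh(a_i))/(\gamma\sigma_i\cosh(b_i) + \beta_i\sinh(b_i))$. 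Differentiating $\mu_{2,(i)}^k$ at $\alpha$ and plugging the resulting expression into the update~\eqref{eq:ND3tran}, I obtain a linear relation $f_{\alpha,(i)}^{k} = r_i\, f_{\alpha,(i)}^{k-1}$ with
\[
r_i = 1 - \theta\left(1 + \frac{(\sigma_i\cosh(a_i) + d_i\sinh(a_i))(\gamma\sigma_i\sinh(b_i) + \beta_i\cosh(b_i))}{(\sigma_i\sinh(a_i) + d_i\cosh(a_i))(\gamma\sigma_i\cosh(b_i) + \beta_i\sinh(b_i))}\right).
\]
Dividing numerator and denominator of the first fraction by $\cosh(a_i)$ and those of the second by $\cosh(b_i)$ converts the four hyperbolic terms into $\tanh(a_i)$ and $\tanh(b_i)$, producing exactly the compact form in~\eqref{eq:rhoND3}.

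Finally, since the iteration is decoupled across the eigenmodes of $A$ (owing to the diagonalization that reduced~\eqref{eq:sysODE} to~\eqref{eq:sysODEreduced}), the full error contraction factor is $\max_{d_i\in\lambda(A)}|r_i|$, and convergence of the algorithm for every initial interface datum is equivalent to this maximum being strictly less than one. I do not anticipate a conceptual obstacle: the only place that requires care is the choice of normalization when passing from $\cosh/\sinh$ to $\tanh$, which differs from the ND$_1$ case (that ended up with $\coth$) precisely because here the Neumann condition sits on $\Omega_1$ and the Dirichlet matching on $\Omega_2$. Sign and positivity issues, as in Corollary~\ref{cor:cvDN3cla}, will be handled in subsequent corollaries rather than in this basic statement.
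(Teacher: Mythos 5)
Your proposal is correct and follows exactly the route the paper intends (and carries out explicitly for DN$_1$ and ND$_1$): use the $\mu_{(i)}$ formulation~\eqref{eq:errND3}, write the subdomain solutions in the form~\eqref{eq:musol} so that the end conditions are automatic, determine $A_i^k$ from the Neumann datum and $B_i^k$ from the Dirichlet matching, and read off the scalar recursion for $f_{\alpha,(i)}^k$; your intermediate expressions for $A_i^k$, $B_i^k$ and $r_i$ all check out and reduce to~\eqref{eq:rhoND3} after dividing by $\cosh(a_i)$ and $\cosh(b_i)$. No gaps.
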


As in the previous section, we choose $\theta=1$ in~\eqref{eq:rhoND3}, and find
\begin{equation}\label{eq:rhoND_3}
  \rho_{\text{ND}_3}|_{\theta=1}=\max_{d_i\in\lambda(A)}\Big|\frac{\sigma_i + d_i\tanh(a_i)}{\sigma_i\tanh(a_i) + d_i}\frac{\gamma\sigma_i\tanh(b_i) + \beta_i}{\gamma\sigma_i + \beta_i\tanh(b_i)}\Big|.
\end{equation}
Again, using the definition of $\beta_i$ and $\sigma_i$
from~\eqref{eq:sob}, we have
 \[\gamma\sigma_i\tanh(b_i) + \beta_i = \gamma\Big(\sqrt{d_i^2+\nu^{-1}}\tanh\big(\sqrt{d_i^2+\nu^{-1}}(T-\alpha)\big)-d_i\Big) + 1,\]
and depending on the values of $\nu,\gamma$ and $\alpha$, this could be negative. However, we can simplify~\eqref{eq:rhoND_3} by taking $\gamma=0$, and then obtain the following result.

\begin{corollary}\label{cor:cvND3cla}
  If $\gamma=0$, then the algorithm ND$_3$ with $\theta=1$ does not converge.
\end{corollary}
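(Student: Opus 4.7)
The plan is to substitute $\theta=1$ and $\gamma=0$ directly into the convergence factor~\eqref{eq:rhoND_3} and show that the resulting expression strictly exceeds $1$ for every eigenvalue $d_i \ge 0$. Since $\gamma = 0$ implies $\beta_i = 1$ by the definition~\eqref{eq:sob}, the second factor collapses to $1/\tanh(b_i)$, and the convergence factor becomes
\[
\rho_{\text{ND}_3}|_{\theta=1,\gamma=0} \;=\; \max_{d_i\in\lambda(A)}\Big|\frac{\sigma_i + d_i\tanh(a_i)}{\sigma_i\tanh(a_i) + d_i}\cdot\frac{1}{\tanh(b_i)}\Big|.
\]

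First I would verify that both factors are positive: since $0<\tanh(x)<1$ for $x>0$ and $\sigma_i,\,d_i\ge 0$, each of $\sigma_i + d_i\tanh(a_i)$, $\sigma_i\tanh(a_i) + d_i$, and $\tanh(b_i)$ is strictly positive, so the absolute value can be removed. The central step is to show that the first factor is strictly greater than $1$. Taking the difference between its numerator and its denominator gives
\[
(\sigma_i + d_i\tanh(a_i)) - (\sigma_i\tanh(a_i) + d_i) = (\sigma_i - d_i)(1 - \tanh(a_i)) > 0,
\]
because $\sigma_i = \sqrt{d_i^2 + \nu^{-1}} > d_i$ (as $\nu>0$) and $\tanh(a_i)<1$.

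Combined with the obvious bound $1/\tanh(b_i) > 1$, this immediately yields
\[
\frac{\sigma_i + d_i\tanh(a_i)}{\sigma_i\tanh(a_i) + d_i}\cdot\frac{1}{\tanh(b_i)} > 1
\]
for every $d_i \in \lambda(A)$, so $\rho_{\text{ND}_3}|_{\theta=1,\gamma=0} > 1$ and the algorithm diverges. There is no substantial obstacle in this proof; it is a short sign/magnitude analysis parallel to the arguments used for Corollaries~\ref{cor:cvDN3cla} and~\ref{cor:cvND2cla}, with the crucial observation being that the strict inequality $\sigma_i > d_i$ forces the first factor above $1$ rather than below (which is the sign that would have given convergence).
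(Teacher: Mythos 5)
Your proof is correct and follows essentially the same route as the paper: the paper merely rewrites the two factors using $\coth$ (so the key difference becomes $(\sigma_i-d_i)(\coth(a_i)-1)>0$ and the second factor is $\coth(b_i)>1$), which is algebraically identical to your $\tanh$ formulation. No gaps.
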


\begin{proof}
  Inserting $\gamma=0$ into~\eqref{eq:rhoND_3}, we get
  \begin{equation}\label{eq:rhoND_3gam0}
    \rho_{\text{DN}_3}|_{\theta=1}=\max_{d_i\in\lambda(A)}\Big|\frac{\sigma_i\coth(a_i) + d_i}{\sigma_i + d_i\coth(a_i)}\coth(b_i)\Big|.
  \end{equation}
  Both the numerator and the denominator are positive. Using $\coth(x)\geq1$, $\forall x>0$, we find
   \[\big(d_i + \sigma_i\coth(a_i)\big)-(\sigma_i + d_i\coth(a_i))=\big(\sigma_i-d_i\big)\big(\coth(a_i)-1\big)>0.\]
  implying that
   \[\frac{\sigma_i\coth(a_i) + d_i}{\sigma_i + d_i\coth(a_i)}\coth(b_i)>1.\] 
  which concludes the proof.
\end{proof}

Comparing Corollaries~\ref{cor:cvDN3cla} and \ref{cor:cvND3cla},
  we find again a symmetry if $\gamma=0$, as for
  Corollaries~\ref{cor:cvDN2cla} and \ref{cor:cvND2cla}, and with
  $\theta=1$, ND$_3$ diverges like DN$_2$ when $\gamma=0$. In fact, in
  this case, the convergence factor of ND$_3$~\eqref{eq:rhoND_3gam0}
  is very similar to the convergence factor of
  DN$_2$~\eqref{eq:rhoDN_2}. Due to this divergence, we cannot
provide a general estimate of the convergence factor. We can
  however still study the convergence behavior for extreme
eigenvalues. Inserting $d_i=0$ into~\eqref{eq:rhoND3}, we find also
\eqref{eq:DN2d0}, and thus for small eigenvalues ND$_3$ behaves
  like DN$_2$, like we observed for ND$_2$ and DN$_3$ earlier. When
the eigenvalue goes to infinity, we also obtain
 \[\lim_{\d_i\rightarrow\infty}\rho_{\text{ND}_3} = |1-2\theta|.\]
Hence all the four algorithms DN$_2$, ND$_2$, DN$_3$ and ND$_3$
  have the same limit for large eigenvalues. By
  equioscillation, we then obtain the same relaxation parameter
  as~\eqref{eq:thetaDN2opt}. This leads to a similar result as
  Theorem~\ref{thm:thetaDN2opt}.

\begin{theorem}\label{thm:thetaND3opt}
  If we assume that the eigenvalues of $A$ are anywhere in the
  interval $[0,\infty)$, then the optimal relaxation parameter
    $\theta^{\star}_{\text{ND}_3}$ for the algorithm
    ND$_3$~\eqref{eq:ND3}-\eqref{eq:ND3tran} with $\gamma=0$ is
    identical to $\theta^{\star}_{\text{DN}_2}$.
\end{theorem}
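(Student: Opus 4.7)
The plan is to mimic the argument used for Theorem~\ref{thm:thetaDN2opt} and Theorem~\ref{thm:thetaND2opt}: show that for $\gamma=0$ the convergence factor $\rho_{\text{ND}_3}$ is monotone in $d_i$ on $[0,\infty)$, so that its supremum over the spectrum is attained at the two endpoints $d_i=0$ and $d_i\to\infty$, and then appeal to equioscillation. The two endpoint values are essentially free: the discussion preceding the theorem already records that $\rho_{\text{ND}_3}|_{d_i=0}$ coincides with the expression~\eqref{eq:DN2d0}, and that $\lim_{d_i\to\infty}\rho_{\text{ND}_3}=|1-2\theta|$. These are exactly the two quantities that were equioscillated in the proof of Theorem~\ref{thm:thetaDN2opt} to produce $\theta^\star_{\text{DN}_2}$, so once monotonicity is in place the equation characterizing the optimum is literally the same.

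The concrete step is therefore to differentiate~\eqref{eq:rhoND_3gam0} with respect to $d_i$ and check a sign. With $\gamma=0$ we have $\omega_i=d_i$, $\beta_i=1$, $\sigma_i=\sqrt{d_i^2+\nu^{-1}}$, $a_i=\sigma_i\alpha$, $b_i=\sigma_i(T-\alpha)$, and $\sigma_i'=d_i/\sigma_i$. I would write $\rho_{\text{ND}_3}|_{\theta=1,\gamma=0}$ as the product of two factors $F(d_i):=(\sigma_i\coth(a_i)+d_i)/(\sigma_i+d_i\coth(a_i))$ and $G(d_i):=\coth(b_i)$, and differentiate each separately. Using $\sigma_i^2-d_i^2=\nu^{-1}$ together with $1-\coth^2(x)=-\operatorname{csch}^2(x)$, the derivative $F'$ collapses into an expression of definite sign, and $G'=-(T-\alpha)(d_i/\sigma_i)\operatorname{csch}^2(b_i)<0$. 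Combining gives a single-signed $d\rho_{\text{ND}_3}/dd_i$ on $(0,\infty)$; in analogy with the corresponding step in the proof of Theorem~\ref{thm:thetaDN2opt}, this is the main technical obstacle, because $\coth(a_i)$ depends on $d_i$ through $\sigma_i$ in a competing way with the explicit $d_i$ in $F$, and the required sign is not visible without exploiting $\sigma_i^2-d_i^2=\nu^{-1}$.

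Granting monotonicity, the supremum of $\rho_{\text{ND}_3}$ over $d_i\in[0,\infty)$ is $\max\bigl(\rho_{\text{ND}_3}|_{d_i=0},\,|1-2\theta|\bigr)$, and minimizing this over $\theta\in(0,1)$ is achieved by equating the two, exactly the equation solved in~\eqref{eq:thetaDN2opt}. Hence $\theta^\star_{\text{ND}_3}=\theta^\star_{\text{DN}_2}$, and the bound $\theta^\star_{\text{DN}_2}<\tfrac12$ established in Theorem~\ref{thm:thetaDN2opt} transfers without change.
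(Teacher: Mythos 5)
Your proposal is correct and follows the same overall strategy as the paper: monotonicity of $\rho_{\text{ND}_3}$ in $d_i$ for $\gamma=0$, so the supremum is attained at $d_i=0$ and $d_i\to\infty$, followed by the same equioscillation equation as for DN$_2$ (and your sketched sign computation does go through: writing the numerator of $F'$ using $\sigma_i^2-d_i^2=\nu^{-1}$ and $\coth^2-1=\operatorname{csch}^2$ yields $F'<0$, and $G'<0$, so the product decreases). The only difference is that the paper obtains monotonicity for free by observing that~\eqref{eq:rhoND_3gam0} and~\eqref{eq:rhoDN_2} coincide under the exchange $a_i\leftrightarrow b_i$ (i.e., $\alpha\leftrightarrow T-\alpha$) and reusing the derivative already computed in Theorem~\ref{thm:thetaDN2opt}, whereas you redo the differentiation directly; both are valid.
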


\begin{proof}
  In the case $\gamma=0$, the convergence factors~\eqref{eq:rhoDN_2}
  and~\eqref{eq:rhoND_3gam0} are the same when exchanging $a_i$
  and $b_i$, and thus the proof follows as for
  Theorem~\ref{thm:thetaDN2opt}.
\end{proof}

\section{Numerical experiments}\label{sec:4}

We illustrate now our six new time domain decomposition algorithms with
  numerical experiments. We divide the time domain
  $\Omega=(0,1)$ into two non-overlapping subdomains with
  interface $\alpha$, and fix the regularization parameter $\nu=0.1$.
We will investigate the performance by plotting the convergence
  factor as function of the eigenvalues $d\in[10^{-2},10^{2}]$.

\subsection{Convergence factor with \texorpdfstring{$\theta=1$}{th1} for a symmetric decomposition}

 We show in Figure~\ref{fig:cv6_th1_sym} the convergence factors
  for all six algorithms for a symmetric decomposition,
  $\alpha=\frac12$, with $\theta=1$, on the left
\begin{figure}
  \centering
  \includegraphics[scale=0.15]{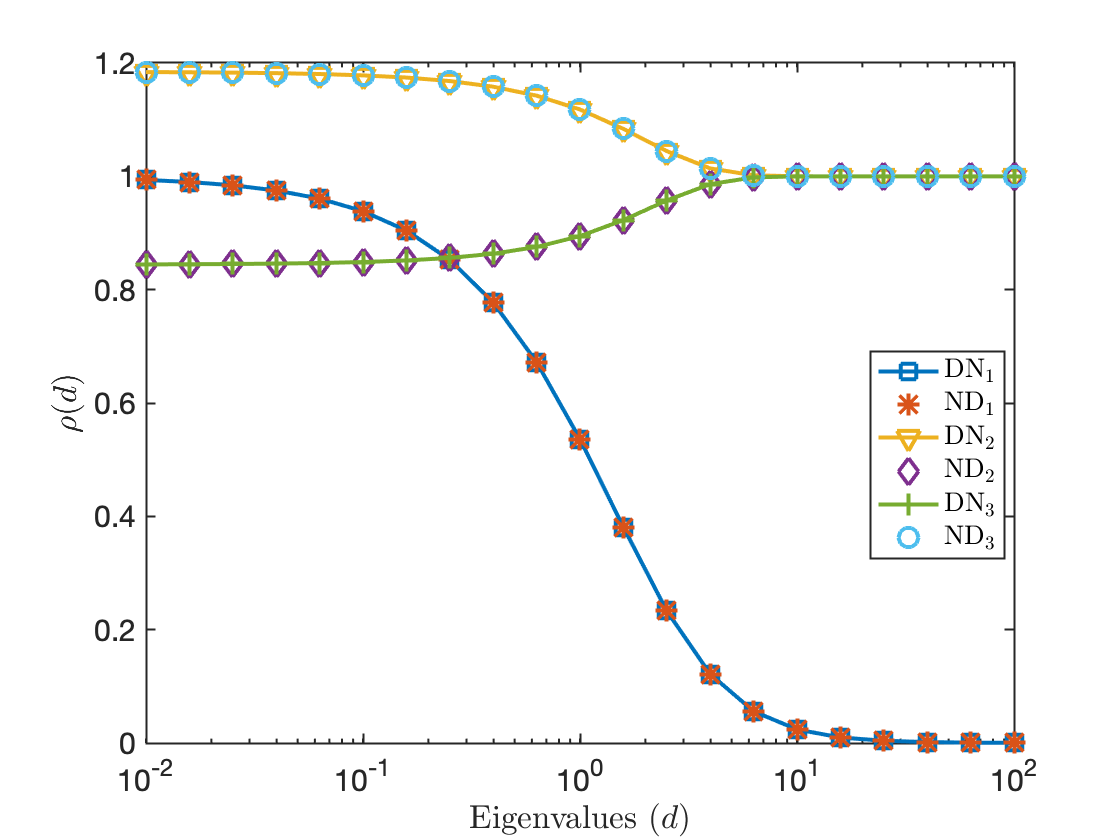}
  \includegraphics[scale=0.15]{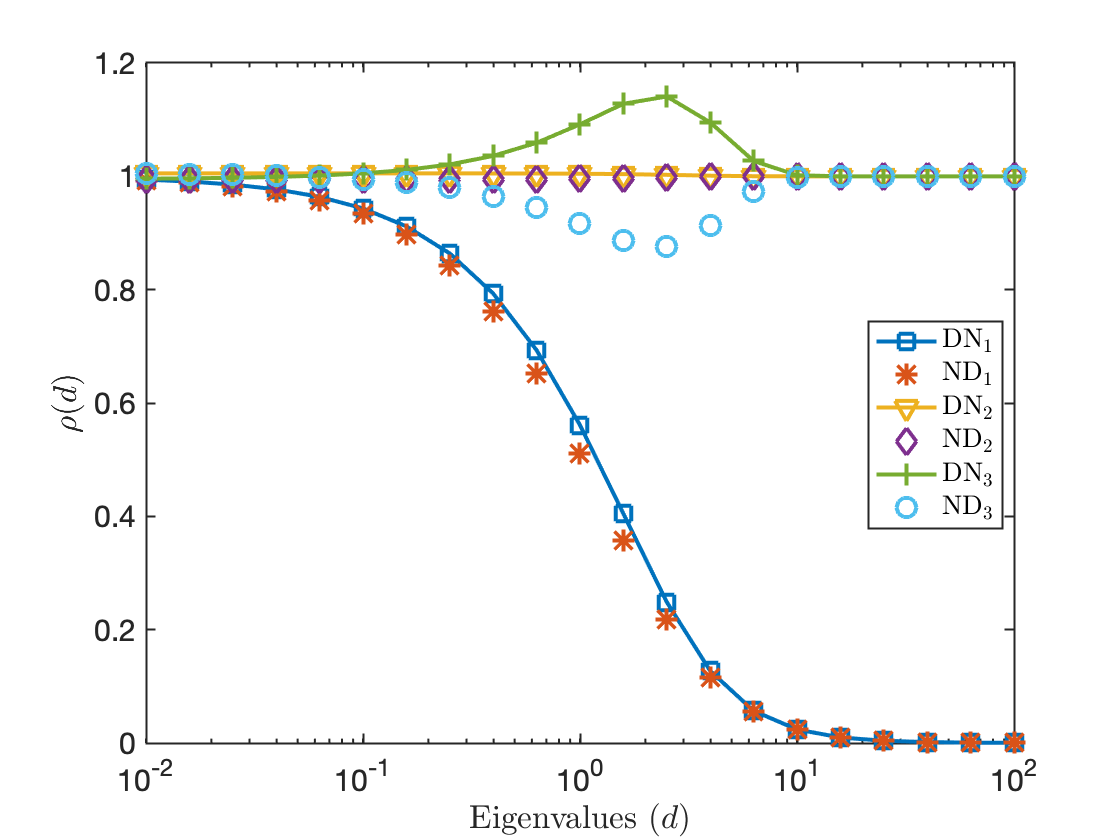}
  \caption{Convergence factor with $\theta=1$ for a symmetric
    decomposition of the six new algorithms as function of the eigenvalues
    $d\in[10^{-2},10^{2}]$. Left: $\gamma=0$. Right: $\gamma=10$.}
  \label{fig:cv6_th1_sym}
\end{figure}
without final target state (i.e., $\gamma=0$), and on the right with
a final target state for $\gamma=10$.  Without final target state,
  the convergence factor of DN$_1$ and ND$_1$ coincide, as one can see
  also by substituting $\gamma=0$ and $a_i=b_i$
into~\eqref{eq:rhoDN_1} and~\eqref{eq:rhoND_1}. The same also
  holds for the pairs DN$_2$ and ND$_3$, and DN$_3$ and ND$_2$. We
also see the symmetry between DN$_2$ and ND$_2$, as well as DN$_3$ and
ND$_3$. This changes when a final target state with $\gamma=10$
is present: while the convergence behavior remains similar for
  DN$_1$ and ND$_1$, the symmetry between DN$_2$ and
  ND$_2$\footnote{This is a bit hard to see on the right in
    Figure~\ref{fig:cv6_th1_sym}, but zooming in confirms that the
    convergence factor of DN$_2$ is above 1, and below 1 for ND$_2$.}
 and DN$_3$ and ND$_3$ remains. Furthermore, DN$_3$ converges with
no final target but diverges with $\gamma=10$, and vice versa for
ND$_3$. In terms of the convergence speed, DN$_1$ and ND$_1$ are
  much better than the other four algorithms for high frequencies in
  both cases, and ND$_1$ is slightly better overall than DN$_1$ when
  $\gamma=10$. The good high frequency behavior follows from our
  analysis: it depends for all 6 algorithms only on $\theta$. In the
  case $\theta=1$ here, the limit is $|1-\theta|=0$ for DN$_1$ and
  ND$_1$, and $|1-2\theta|=1$ for DN$_2$, DN$_3$, ND$_2$ and
  ND$_3$. For the zero frequency, $d=0$, the convergence factor for
  DN$_1$ and ND$_1$ equals 1 for all $\gamma$, but for DN$_2$, DN$_3$,
  ND$_2$ and ND$_3$ this depends on $\gamma$. Inserting $\theta=1$
  into~\eqref{eq:DN2d0} and~\eqref{eq:ND2d0}, we obtain for DN$_2$ and
  ND$_3$ the convergence factor
  $\coth(\sqrt{\nu^{-1}}\alpha)\frac{\sqrt{\nu^{-1}}\coth(\sqrt{\nu^{-1}}\alpha)
    + \nu^{-1}\gamma}{\sqrt{\nu^{-1}} +
    \nu^{-1}\gamma\coth(\sqrt{\nu^{-1}}\alpha)}$, and for ND$_2$ and
  DN$_3$
  $\tanh(\sqrt{\nu^{-1}}\alpha)\frac{\sqrt{\nu^{-1}}\tanh(\sqrt{\nu^{-1}}\alpha)
    + \nu^{-1}\gamma}{\sqrt{\nu^{-1}} +
    \nu^{-1}\gamma\tanh(\sqrt{\nu^{-1}}\alpha)}$. For $\gamma=0$, the
  two convergence factors are approximately 1.185 for DN$_2$ and
  ND$_3$, 0.844 for ND$_2$ and DN$_3$, and for $\gamma=10$, we get
  1.005 for DN$_2$ and ND$_3$, and 0.995 ND$_2$ and DN$_3$.

\subsection{Convergence factor with \texorpdfstring{$\theta=1$}{th1} for an asymmetric decomposition}

For $\theta=1$, we show on the left in
Figure~\ref{fig:cv6_th1_asym} the convergence factors with interface
at $\alpha=0.3$ and no final target state (i.e., $\gamma=0$), and on
the right $\alpha=0.7$ with a final target state $\gamma=10$.
\begin{figure}
  \centering
  \includegraphics[scale=0.15]{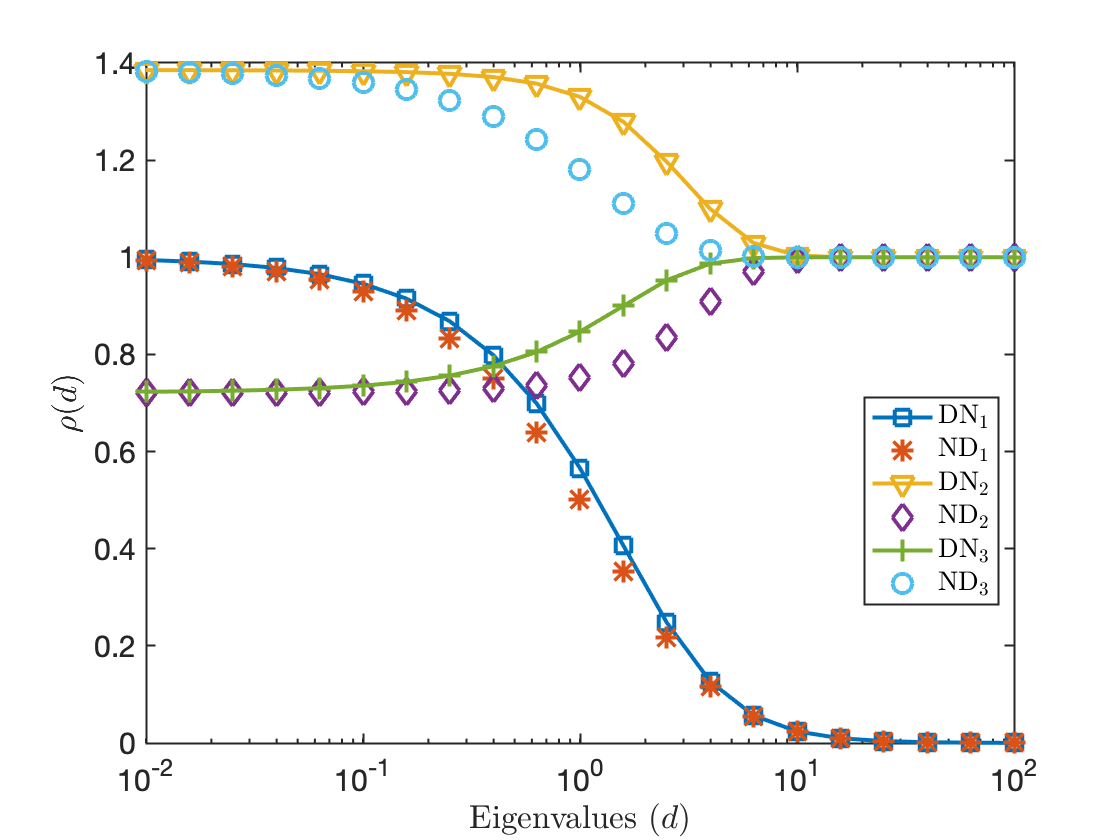}
  \includegraphics[scale=0.15]{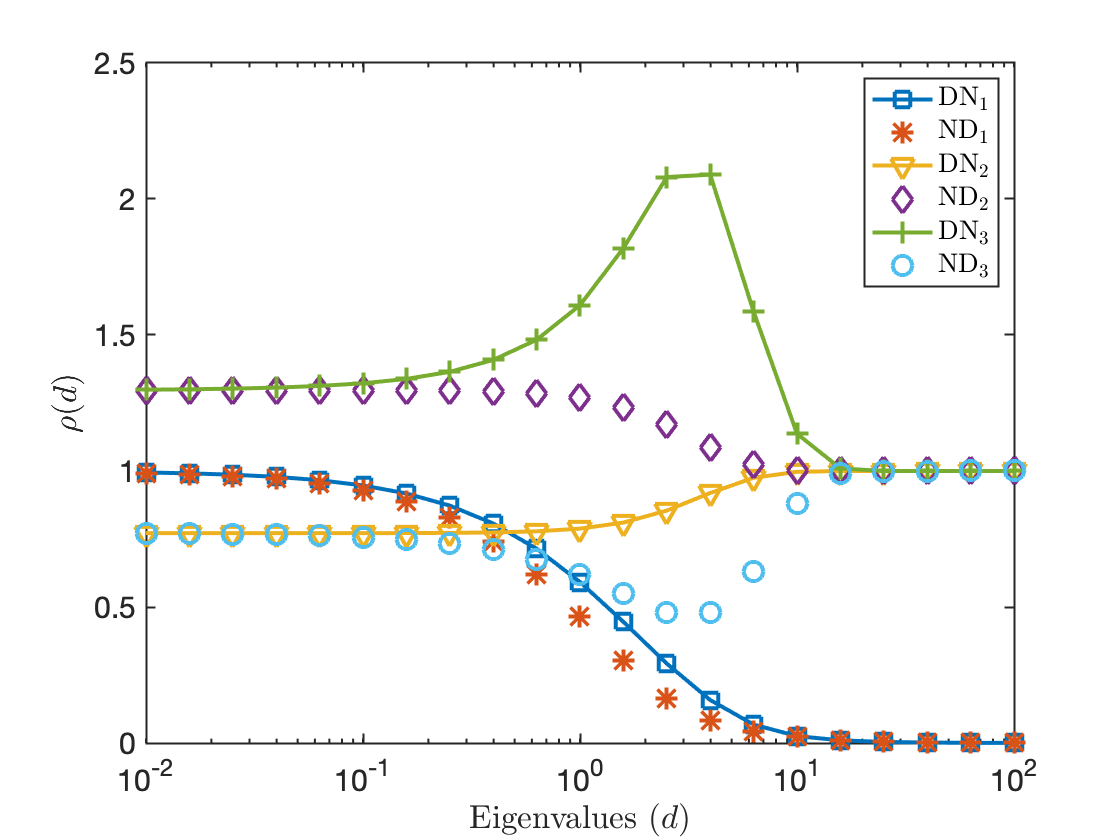}
  \caption{Convergence factor with $\theta=1$ for an asymmetric
    decomposition of all six new algorithms as function of the eigenvalues
    $d\in[10^{-2},10^{2}]$. Left: $\gamma=0$ and $\alpha=0.3$. Right:
    $\gamma=10$ and $\alpha=0.7$.}
  \label{fig:cv6_th1_asym}
\end{figure}
For DN$_1$ and ND$_1$, the convergence factor is similar in both
cases, ND$_1$ being slightly better, and convergence is also similar
to the symmetric case. This is because the convergence factor of the
two algorithms for small and large eigenvalues is independent of the
values of $\alpha,\nu$ and $\gamma$. Their high frequency behavior
  is also much better compared to the other four algorithms in the two
  cases.  For the other four algorithms, we see again the symmetry
  between DN$_2$ and ND$_2$, and DN$_3$ and ND$_3$. In general, DN$_2$
  and ND$_3$ behave similarly, and also ND$_2$ and DN$_3$, but the
  influence of $\gamma$ is more significant for DN$_3$ and ND$_3$ than
  DN$_2$ and ND$_2$. However their convergence factors all go to 1 for
  large eigenvalues, as for the symmetric decomposition.
  For the zero frequency, using the expressions~\eqref{eq:DN2d0}
  and~\eqref{eq:ND2d0} with $\theta=1$, we obtain approximately 1.386 
  for DN$_2$ and ND$_3$, and 0.722 for ND$_2$ and DN$_3$ in the 
  case $\gamma=0$, $\alpha=0.3$. For  $\gamma=10$, $\alpha=0.7$, 
  we get 0.771 for DN$_2$ and ND$_3$, and 1.296 for ND$_2$ and DN$_3$.

\subsection{Convergence factor for Category I with different \texorpdfstring{$\theta$}{th}}

Since DN$_1$ and ND$_1$ performed quite similarly, and much better
than the others, we now investigate the dependence of DN$_1$ on
$\theta$ in more detail. On the left in Figure~\ref{fig:cv1_5th}
\begin{figure}
  \centering
  \includegraphics[scale=0.15]{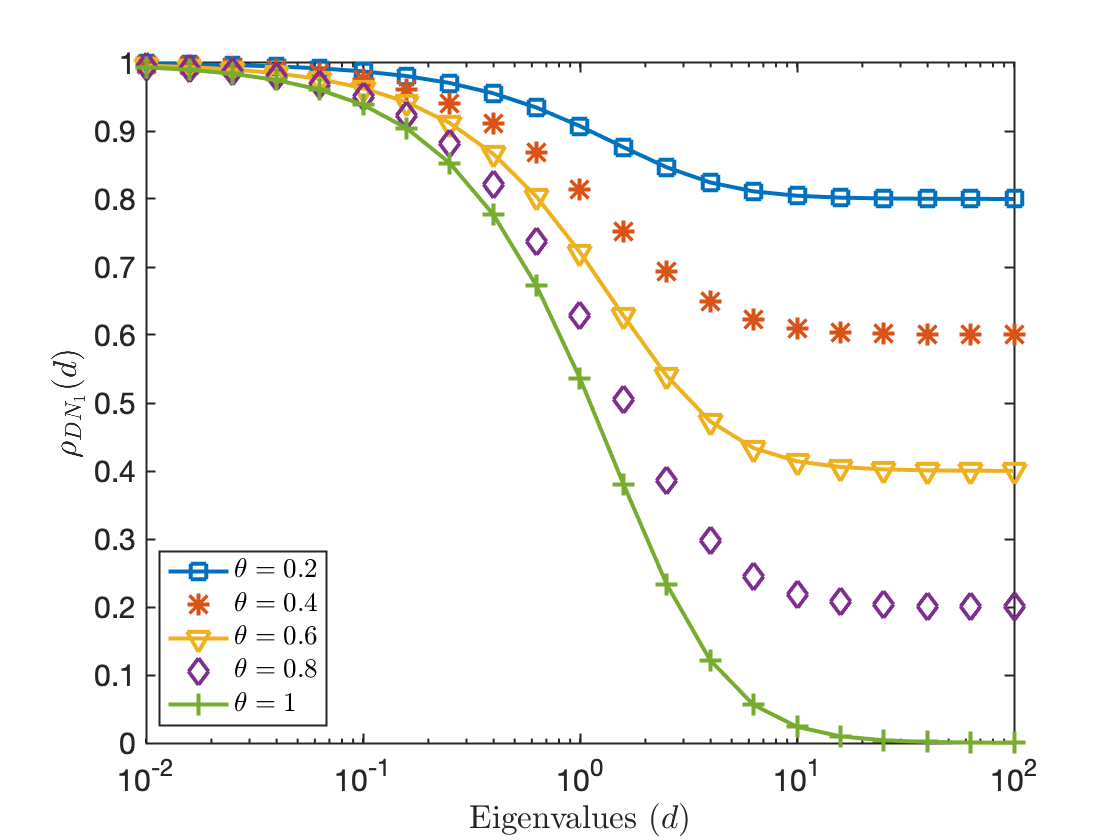}
  \includegraphics[scale=0.15]{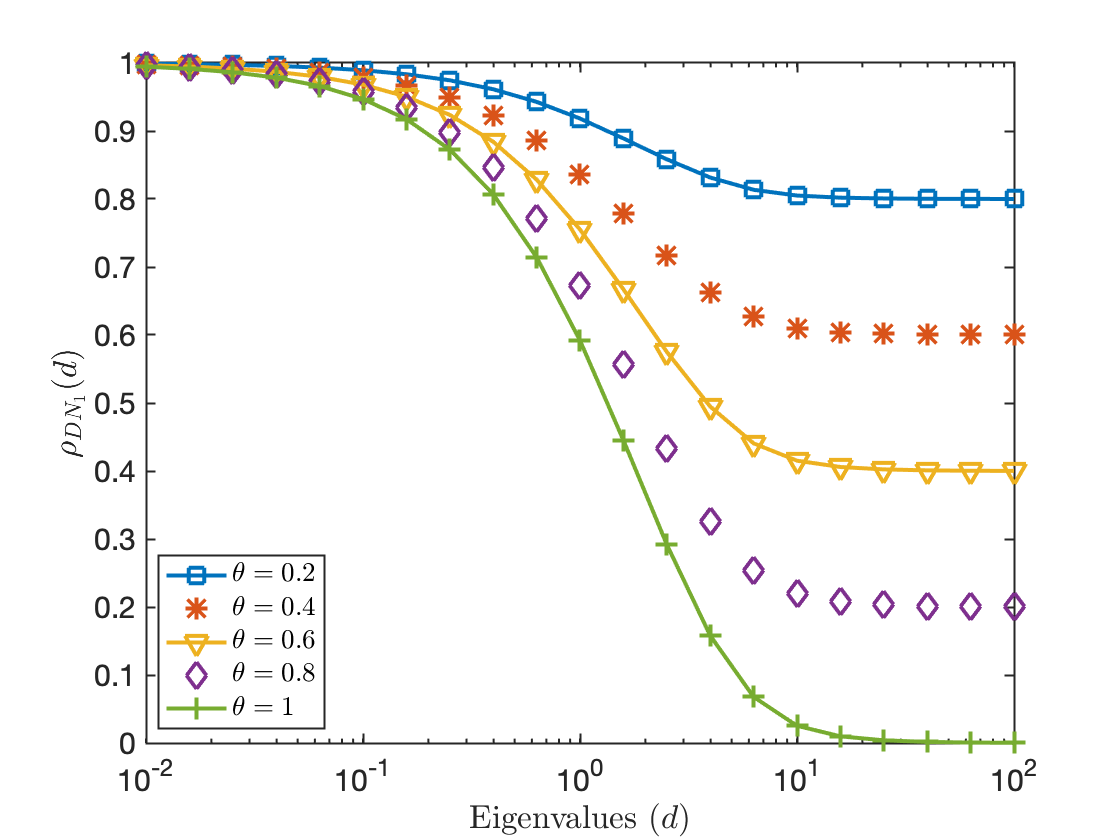}
  \caption{Convergence factor with different relaxation parameters of
    DN$_1$ as function of the eigenvalues
    $d\in[10^{-2},10^{2}]$. Left: $\gamma=0$ and $\alpha=0.5$. Right:
    $\gamma=10$ and $\alpha=0.7$.}
  \label{fig:cv1_5th}
\end{figure}
we show the convergence factor of DN$_1$ without final target
  state and a symmetric decomposition, and on the right with a final
  target state $\gamma=10$ and an asymmetric decomposition. The
  convergence is very similar for these two settings, DN$_1$ is
  robust, and $\theta=1$ gives the best performance.

\subsection{Convergence factor with optimal \texorpdfstring{$\theta$}{thopt} for a symmetric decomposition}

Since the algorithms in Categories II and III are strongly related, we
  compare them now in Figure~\ref{fig:cv4_thopt_sym} for a symmetric
  decomposition using their optimal relaxation parameter
  $\theta^{\star}$, obtained numerically.
\begin{figure}
  \centering
  \includegraphics[scale=0.15]{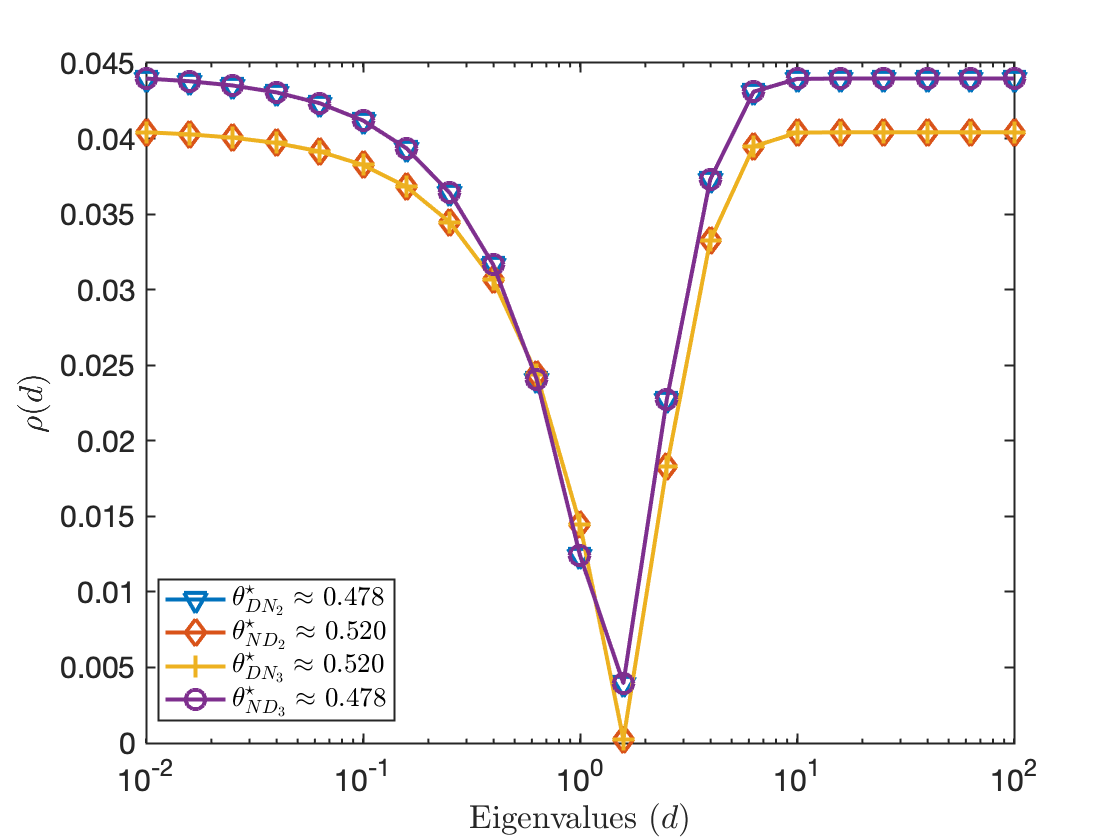}
  \includegraphics[scale=0.15]{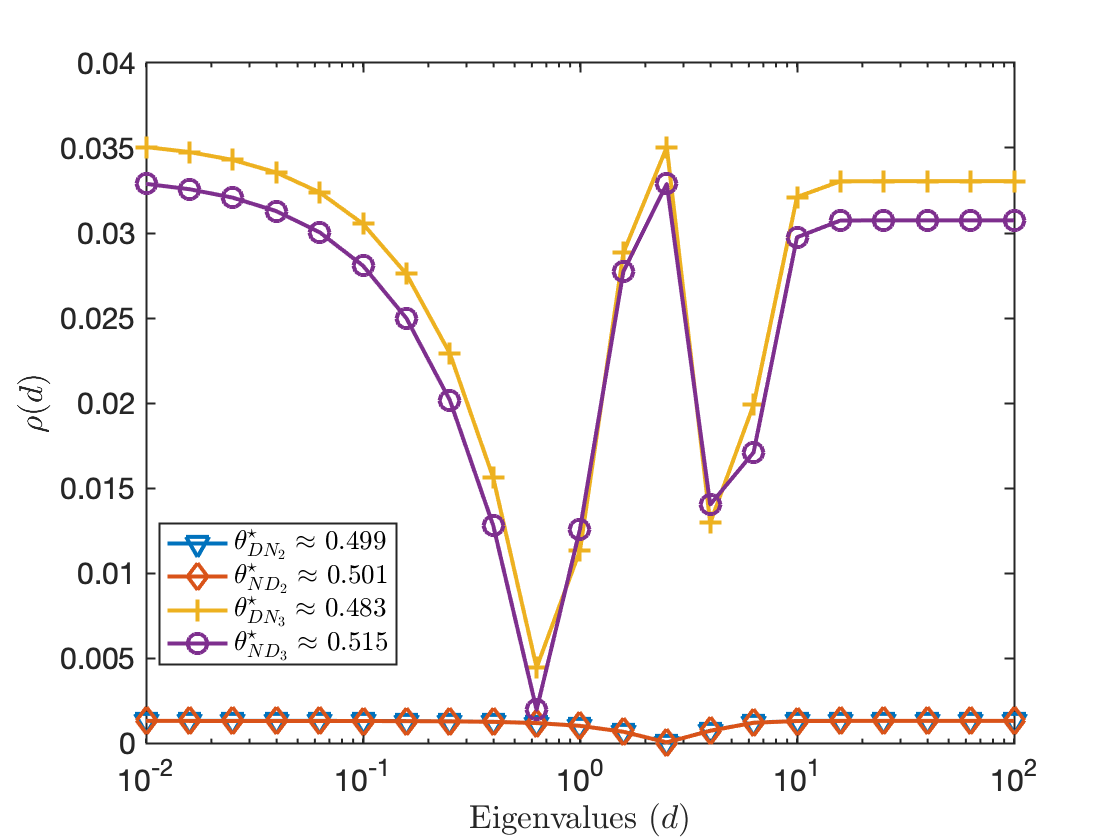}
  \caption{Convergence factor with $\theta^{\star}$ for a symmetric
    decomposition as function of the eigenvalues
    $d\in[10^{-2},10^{2}]$. Left: $\gamma=0$. Right: $\gamma=10$.}
  \label{fig:cv4_thopt_sym}
\end{figure}
On the left without final state, DN$_2$ and ND$_3$, and also
  ND$_2$ and DN$_3$, have the same convergence factor, and the
optimal relaxation parameter satisfies
$\theta^{\star}_{\text{DN}_2}=\theta^{\star}_{\text{ND}_3}$ and
$\theta^{\star}_{\text{ND}_2}=\theta^{\star}_{\text{DN}_3}$ as proved
in Theorem~\ref{thm:thetaDN3opt} and
Theorem~\ref{thm:thetaND3opt}. These correspond to the value found
using~\eqref{eq:thetaDN2opt} and~\eqref{eq:thetaND2opt}. In terms of
the convergence speed, ND$_2$ and DN$_3$ are slightly better than
DN$_2$ and ND$_3$. However, these similarities disappear when we add a
final target state $\gamma=10$. On the right in
  Figure~\ref{fig:cv4_thopt_sym}, we see that now the convergence
  behavior of DN$_2$ and ND$_2$ is similar, and also DN$_3$ and
  ND$_3$ are rather similar, and DN$_2$ and ND$_2$ converge much
  faster compared to the others. We also see equioscillation between
small and large eigenvalues. The theoretical results
in~\eqref{eq:thetaDN2opt} as well as in~\eqref{eq:thetaND2opt} still
determine the optimal relaxation parameter
$\theta^{\star}_{\text{DN}_2}$ and $\theta^{\star}_{\text{ND}_2}$
for DN$_2$ and ND$_2$, but not for DN$_3$ and ND$_3$, where we observe
an equioscillation between small eigenvalues with some eigenvalues in
the interval $[1,10]$. Also ND$_3$ is
slightly better than DN$_3$.

\subsection{Convergence factor with optimal \texorpdfstring{$\theta$}{thopt} for an asymmetric decomposition}

We show in Figure~\ref{fig:cv4_thopt_asym} the convergence factor
with the optimal relaxation parameter $\theta^{\star}$ for the four
algorithms in Categories II and III for an asymmetric decomposition.
\begin{figure}
  \centering
  \includegraphics[scale=0.15]{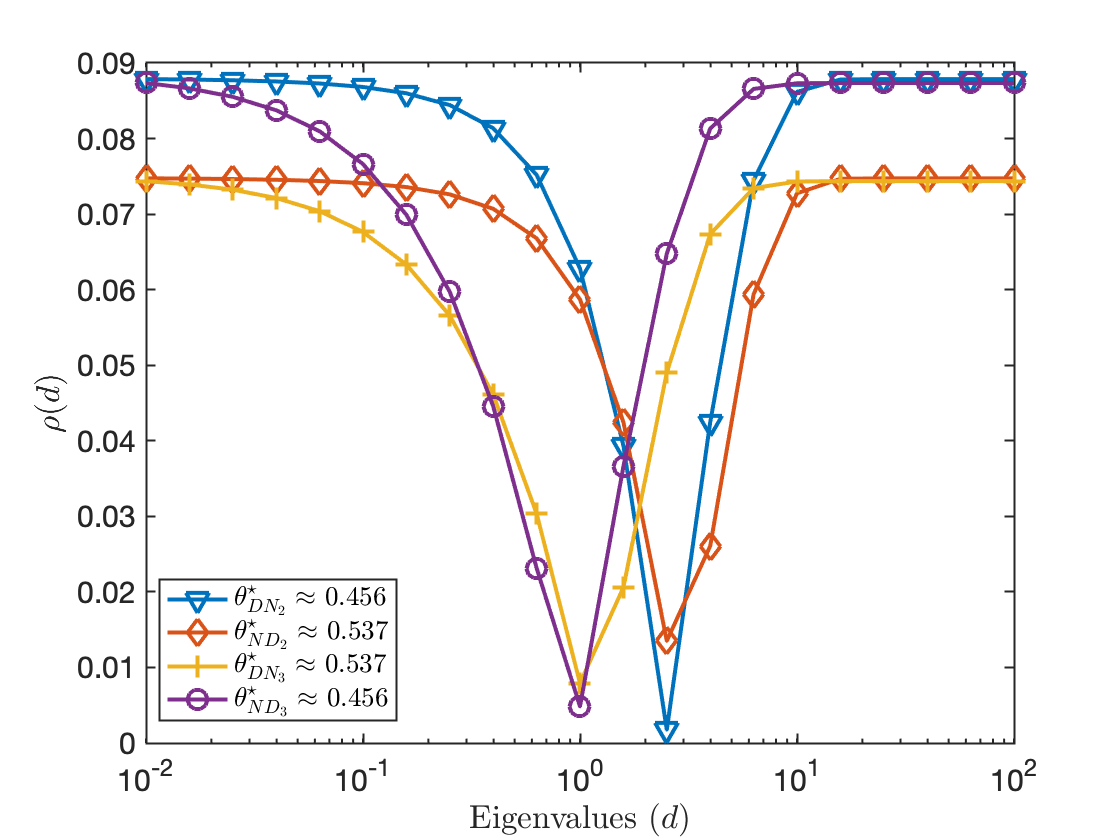}
  \includegraphics[scale=0.15]{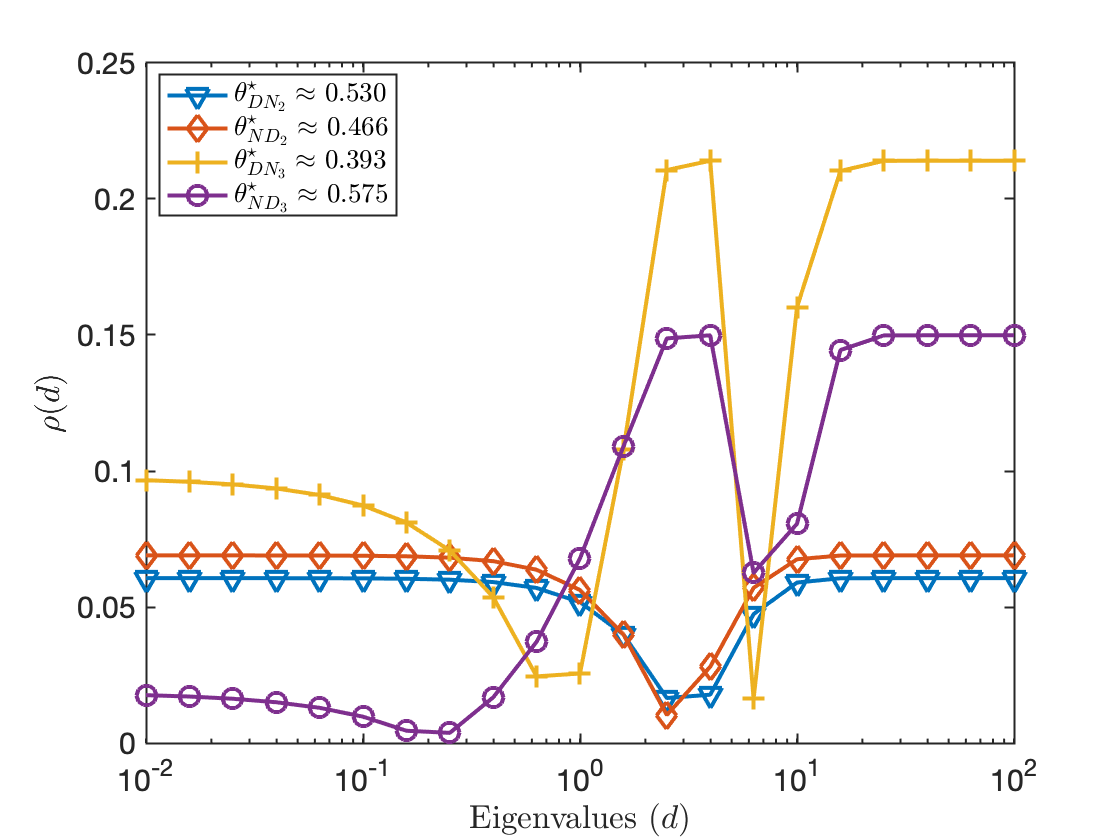}
  \caption{Convergence factor with $\theta^{\star}$ for an asymmetric
    decomposition as function of the eigenvalues
    $d\in[10^{-2},10^{2}]$. Left: $\gamma=0$ and $\alpha=0.3$. Right:
    $\gamma=10$ and $\alpha=0.7$.}
  \label{fig:cv4_thopt_asym}
\end{figure}
  On the left with $\alpha=0.3$ and no target state $\gamma=0$ the
  convergence factors of the four algorithms are similar. This is
  consistent with the monotonicity we proved without final
state. The optimal relaxation parameters satisfy
$\theta^{\star}_{\text{DN}_2}=\theta^{\star}_{\text{ND}_3}$ and
$\theta^{\star}_{\text{ND}_2}=\theta^{\star}_{\text{DN}_3}$, and we
can use~\eqref{eq:thetaDN2opt} and~\eqref{eq:thetaND2opt} to determine
their values. Similar to the symmetric decomposition, ND$_2$ and
DN$_3$ are slightly better than the others. However, these properties
disappear again on the right in Figure~\ref{fig:cv4_thopt_asym}
when there is a final state $\gamma=10$.  While DN$_2$ and
  ND$_2$ still equioscillate between the small and large eigenvalues,
  and the optimal relaxation parameter can be determined
using~\eqref{eq:thetaDN2opt} and~\eqref{eq:thetaND2opt}, for
  DN$_3$ and ND$_3$ the equioscillation is between large eigenvalues
  and some eigenvalues in the interval $[1,10]$. Hence, the optimal
relaxation parameters for the algorithms DN$_3$ and ND$_3$ are
different from DN$_2$ and ND$_2$. Also DN$_2$ and ND$_2$ converge much
faster than the other two, and DN$_2$ is slightly faster than ND$_2$.

\section{Conclusion}\label{sec:5}

  We introduced and analyzed six new time domain decomposition
  methods based on Dirichlet-Neumann and Neumann-Dirichlet techniques
  for parabolic optimal control problems. Our analysis shows that
  while at first sight it might be natural to preserve the
  forward-backward structure in the time subdomains as well, there are
  better choices that lead to substantially faster algorithms. We
  find that the algorithms in Categories II and III with
  optimized relaxation parameter are much faster than the algorithms
  in Category I, and they can still be identified to be of
  forward-backward structure using changes of variables. We also found
  many interesting mathematical connections between these
  algorithms. Algorithms in Category I are natural smoothers, while
  algorithms in Categories II and III with optimized relaxation
  parameter are highly efficient solvers.

  Our study was restricted to the two subdomain case, but the
  algorithms can all naturally be written for many subdomains, and
  then one can also run them in parallel. They can also be used for
  more general parabolic constraints than the heat equation. Extensive
  numerical results will appear elsewhere.

\bibliographystyle{siamplain}
\bibliography{references}

\appendix

\section{Convergence analysis using \texorpdfstring{$\mu_{(i)}$}{ui}}\label{sec:app1}

We can also use formulation~\eqref{eq:mu} to analyze the
convergence behavior of the
algorithm DN$_1$~\eqref{eq:DN1}-\eqref{eq:DN1tran}, we then need to study
\begin{equation}\label{eq:errDN1bis}
  \left\{
    \begin{aligned}
      \ddot \mu_{1,(i)}^k - \sigma_i^2 \mu_{1,(i)}^k &= 0 \text{ in } \Omega_1,\\
      \dot \mu_{(i)}(0)-d_i\mu_{(i)}(0) &= 0, \\
      \mu_{1,(i)}^k(\alpha) &= f_{\alpha,(i)}^{k-1},
    \end{aligned}
  \right.
  \quad
  \left\{
    \begin{aligned}
      \ddot \mu_{2,(i)}^k - \sigma_i^2 \mu_{2,(i)}^k &= 0 \text{ in } \Omega_2,\\
      \ddot \mu_{2,(i)}^k(\alpha) - d_i\dot\mu_{2,(i)}^k(\alpha) &= \ddot \mu_{1,(i)}^k(\alpha) - d_i\dot\mu_{1,(i)}^k(\alpha),\\
      \gamma \dot \mu_{(i)}(T)+\beta_i\mu_{(i)}(T)&=0,
    \end{aligned}
  \right.
\end{equation}
with the update of the transmission condition
\begin{equation}\label{eq:errDN1tranbis}
  f_{\alpha,(i)}^{k}= (1-\theta)f_{\alpha,(i)}^{k-1} + \theta\mu_{2,(i)}^k(\alpha)\quad \theta\in(0,1). 
\end{equation}
This is a DR type algorithm applied to
solve~\eqref{eq:mu}. Using~\eqref{eq:musol}, we determine the two
coefficients $A_i^k$ and $B_i^k$ from the transmission condition
from~\eqref{eq:errDN1bis}.
Using then relation~\eqref{eq:errDN1tranbis}, we find
\[f_{\alpha,(i)}^{k}= (1-\theta)f_{\alpha,(i)}^{k-1} + \theta\nu^{-1}f_{\alpha,(i)}^{k-1}\frac{\gamma\sigma_i+\beta_i\tanh(b_i)}{\big(\sigma_i+d_i\tanh(a_i)\big)\big(\omega_i+\sigma_i\tanh(b_i)\big)},\]
which is exactly the same convergence factor as~\eqref{eq:rhoDN1}.
\end{document}